\def\COMMENT#1{}
\let\COMMENT=\footnote
\def\TASK#1{}
\def\noproof{{\unskip\nobreak\hfill\penalty50\hskip2em\hbox{}\nobreak\hfill%
        $\square$\parfillskip=0pt\finalhyphendemerits=0\par}\goodbreak}
\def\endproof{\noproof\bigskip}
\newdimen\margin   
\def\textno#1&#2\par{%
    \margin=\hsize
    \advance\margin by -4\parindent
           \setbox1=\hbox{\sl#1}%
    \ifdim\wd1 < \margin
       $$\box1\eqno#2$$%
    \else
       \bigbreak
       \hbox to \hsize{\indent$\vcenter{\advance\hsize by -3\parindent
       \sl\noindent#1}\hfil#2$}%
       \bigbreak
    \fi}
\def\proof{\removelastskip\penalty55\medskip\noindent{\bf Proof. }}
\def\eps{\varepsilon}
\def\a{\alpha}
\def\b{\beta}
\def\d{\delta}
\def\g{\gamma}
\def\u{\underline}
\def\B{\mathcal{B}}
\def\oB{\overline{\mathcal B}}
\def\dB{\mathcal B _{n,k}}
\def\doB{\overline{\mathcal B} _{n,k}}
\newcounter{lth}
\newcounter{2th}
\thanks {$^\fnsymbol{lth}$This research was undertaken whilst the first author was a researcher at Charles University, Prague. The work leading to this invention has received funding from the European Research Council under the European Union's Seventh Framework Programme (FP7/2007-2013)/ERC grant agreement no.~259385.}
\thanks{$^\fnsymbol{2th}$The second author was partially supported by NSA grants H98230-10-1-0165 and
H98230-12-1-0283.}
\newtheorem{firstthm}{Proposition}[section]
\newtheorem{thm}[firstthm]{Theorem}
\newtheorem{prop}[firstthm]{Proposition}
\newtheorem{lemma}[firstthm]{Lemma}
\newtheorem{claim}[firstthm]{Claim}
\begin{document}
\title{Exact minimum degree thresholds for perfect matchings in uniform hypergraphs II}
\author{Andrew Treglown$^\fnsymbol{lth}$ and Yi Zhao$^\fnsymbol{2th}$}
\date{\today}

\begin{abstract}
Given positive integers $k\geq 3$ and $\ell$ where  $k/2 \leq \ell \leq k-1$,
we give a  minimum $\ell$-degree condition that ensures a perfect matching in a $k$-uniform hypergraph.
This condition is best possible and improves on work of
Pikhurko~\cite{pik} who gave an asymptotically exact result. Our approach makes use of the absorbing method, and builds
on work in~\cite{zhao}, where we proved the result for $k$ divisible by $4$.
\end{abstract}

\maketitle

\section{Introduction}\label{sec1}
A central question in graph theory is to establish conditions that ensure a (hyper)graph $H$ contains some spanning (hyper)graph $F$. 
Of course, it is desirable to fully characterize those (hyper)graphs $H$ that contain a  spanning copy of a given
(hyper)graph $F$. 
Tutte's theorem~\cite{tutte} characterizes those graphs with a perfect matching.
(A \emph{perfect matching} in a (hyper)graph $H$ is a collection of vertex-disjoint edges of $H$ which cover the vertex set $V(H)$ of $H$.) However, for some (hyper)graphs $F$  it is unlikely that
such a characterization exists. 
Indeed, for many (hyper)graphs $F$  the decision problem of whether a (hyper)graph $H$ contains $F$ is NP-complete.
For example, in contrast to the graph case, the decision problem whether a $k$-uniform hypergraph contains a perfect matching is NP-complete for $k\geq 3$ (see~\cite{karp, garey}).
Thus, it is desirable to find sufficient conditions that ensure a perfect matching in a $k$-uniform hypergraph.

Given a $k$-uniform hypergraph $H$ with an $\ell$-element vertex set $S$ (where $0 \leq \ell \leq k-1$) we define
$d_H (S)$ to be the number of edges containing $S$. The \emph{minimum $\ell$-degree $\delta _{\ell}
(H)$} of $H$ is the minimum of $d_H (S)$ over all $\ell$-element sets $S$ of vertices in $H$. Clearly $\delta_0(H)$ is the number of edges in $H$. We also
refer to  $\delta _1 (H)$ as the \emph{minimum vertex degree} of $H$ and  $\delta _{k-1}
(H)$ the \emph{minimum codegree} of $H$.

Over the last few years there has been a strong focus in establishing minimum $\ell$-degree thresholds  that force a perfect matching in
a $k$-uniform hypergraph. See~\cite{rrsurvey} for a survey on matchings (and Hamilton cycles) in hypergraphs. In particular,
R\"odl, Ruci\'nski and Szemer\'edi~\cite{rrs} determined the minimum codegree threshold that ensures  a perfect matching in
a $k$-uniform hypergraph on $n$ vertices for all $k\ge 3$. The threshold is $n/2 - k + C$, where $C\in \{3/2, 2, 5/2, 3\}$ depends on the values of $n$ and $k$. This improved bounds given in~\cite{ko1, rrs1}.

Less is known about minimum vertex degree thresholds  that force a perfect matching.
One of the earliest results on perfect matchings was given by Daykin and H\"aggkvist \cite{dayhag}, who showed that
a $k$-uniform hypergraph $H$ on $n$ vertices contains a perfect matching provided that $\delta_1(H)\ge (1 - 1/k)\binom{n-1}{k-1}$.
H\`an, Person and Schacht~\cite{hps} determined, asymptotically, the minimum vertex degree that forces a perfect matching
in a $3$-uniform hypergraph.
K\"uhn, Osthus and Treglown~\cite{KOTmatch} and independently
Khan~\cite{khan1} made this result exact.
Khan~\cite{khan2} has also determined the exact minimum vertex degree threshold for $4$-uniform
hypergraphs.
For $k\geq 5$, the precise minimum vertex degree threshold which ensures a perfect matching in a $k$-uniform hypergraph is not
known.

The situation for $\ell$-degrees where $1 < \ell < k-1$ is also still open.
H\`an, Person and Schacht~\cite{hps} provided conditions on $\delta _{\ell} (H)$ that ensure a perfect matching in the case
when $1 \leq \ell< k/2$. These bounds were subsequently lowered by Markstr\"om and Ruci\'nski~\cite{mark}.
 Alon et al.~\cite{afh} gave a connection between the minimum $\ell$-degree
that forces a perfect matching in a $k$-uniform hypergraph and the  minimum $\ell$-degree
that forces a \emph{perfect fractional matching}. As a consequence of this result they determined, asymptotically,
the minimum $\ell$-degree which forces a perfect matching in a $k$-uniform hypergraph for the following values of
$(k, \ell)$: $(4, 1)$, $(5, 1)$, $(5, 2)$, $(6, 2)$, and  $(7, 3)$.

Pikhurko~\cite{pik} showed that if $\ell \geq k/2$ and $H$ is a $k$-uniform hypergraph whose order $n$ is divisible by $k$ then
$H$ has a perfect matching provided that $\delta _{\ell} (H) \geq (1/2+o(1))\binom{n}{k-\ell}$. This result is best possible up to the $o(1)$-term
(see the constructions in $\mathcal H_{\text{ext}} (n,k)$ below). 

In this paper we make Pikhurko's result exact. In order to state our main result, we need some more definitions. Fix a set $V$ of $n$ vertices. Given a partition of $V$ into non-empty sets $A, B$,
let $E^k _{\text{odd}}(A, B)$ ($E^k _{\text{even}}(A, B)$) denote the family of all $k$-element subsets of $V$ that intersect $A$ in an odd (even) number of vertices.
(Notice that the ordering of the vertex classes $A,B$ is important.)
When it is clear from the context, we write, for example, $E _{\text{odd}}(A, B)$.
Define $\mathcal B_{n,k}(A,B)$ to be the $k$-uniform hypergraph with vertex set $V= A \cup B$ and edge set $E_{\text{odd}} (A,B)$.
Note that the complement $\overline{\mathcal B}_{n,k} (A,B)$ of $\mathcal B_{n,k} (A,B)$ has edge set $E_{\text{even}} (A,B)$.

Suppose $n,k \in \mathbb N$ such that $k$ divides $n$. Define $\mathcal H_{\text{ext}} (n,k)$ to be the collection of the following hypergraphs: $\mathcal H_{\text{ext}} (n,k)$ contains all hypergraphs $\doB (A,B)$ where $|A|$ is odd. Further, if $n/k$ is odd 
then $\mathcal H_{\text{ext}} (n,k)$ also contains all hypergraphs $\dB (A,B)$ where $|A|$ is even; if $n/k$ is even then $\mathcal H_{\text{ext}} (n,k)$ also contains all hypergraphs $\dB (A,B)$ where $|A|$ is odd.

It is easy to see that no hypergraph in $\mathcal H_{\text{ext}} (n,k)$ contains a perfect matching. Indeed, first assume that $|A|$ is even and $n/k$ is odd. Since every edge of ${\mathcal B}_{n,k}(A, B)$ intersects $A$ in an odd number of vertices, one cannot cover $A$ with an odd number of disjoint odd sets. Similarly $\mathcal B_{n,k} (A, B)$ does not
contain a perfect matching if $|A|$ is odd and $n/k$ is even. Finally, if $|A|$ is odd then since every edge of $\overline{\mathcal B}_{n,k}(A,B)$ intersects $A$ in an even number of vertices, $\overline{\mathcal B}_{n,k}(A, B)$ does not contain a perfect matching.

Given $\ell \in \mathbb N$ such that $k/2 \leq  \ell \leq k-1$ define $\delta (n,k, \ell)$ to be the maximum of the minimum $\ell$-degrees among all the hypergraphs in  $\mathcal H_{\text{ext}} (n,k)$. For example, it is not hard to see that
\begin{equation} \label{eq:nk1}
\delta(n, k, k-1) = \left\{\begin{array}{ll}
{n}/{2} - k + 2 & \text{if $k/2$ is even and $n/k$ is odd}\\
{n}/{2} - k + {3}/{2} & \text{if $k$ is odd and $(n-1)/{2}$ is odd}\\
{n}/{2} - k + {1}/{2} & \text{if $k$ is odd and $(n-1)/{2}$ is even} \\
{n}/{2} - k + 1 & \text{otherwise.}
\end{array} \right.\end{equation}

The following is our main result.
\begin{thm}\label{mainthm}
Let $k, \ell \in \mathbb N$ such that $k\ge 3$ and $k/2\leq \ell \leq k-1$. Then
there exists an $n_0 \in \mathbb N$ such that the following holds. Suppose $H$ is a $k$-uniform hypergraph on
$n \geq n_0$ vertices where $k$ divides $n$. If
$$\delta _{\ell} (H) > \delta (n,k,\ell)$$
then $H$ contains a perfect matching.
\end{thm}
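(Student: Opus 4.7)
My plan is to follow the now-standard stability/extremal dichotomy used for exact hypergraph matching thresholds, in the same spirit as the prior work \cite{zhao} for $k$ divisible by $4$ and as in \cite{KOTmatch,rrs}. Fix a small $\eps>0$. Call $H$ \emph{$\eps$-extremal} if there exists a partition $V(H)=A\cup B$ with $|A|$ close to $n/2$ such that $H$ is within $\eps n^k$ edges of some member of $\mathcal H_{\text{ext}}(n,k)$ with parts $A,B$; otherwise call $H$ \emph{$\eps$-non-extremal}. The proof splits into two cases accordingly, and one chooses $\eps$ small enough (depending on $k$) for the case analysis to glue together. The hypothesis $\delta_\ell(H)>\delta(n,k,\ell)$ is only barely above the asymptotic threshold $(1/2+o(1))\binom{n}{k-\ell}$ of Pikhurko \cite{pik}, so both cases must exploit the extra slack of the strict inequality.

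\textbf{Non-extremal case via absorbing.} Here I would run a three-step absorbing argument. First, prove an \emph{absorbing lemma}: there is a matching $M_{\text{abs}}$ in $H$ with $|V(M_{\text{abs}})|\le \eps n$ such that, for every set $U\subseteq V(H)\setminus V(M_{\text{abs}})$ with $|U|\le \eps^2 n$ and $k\mid |U|$, the induced hypergraph $H[V(M_{\text{abs}})\cup U]$ contains a perfect matching. This is built in the standard way: for each $k$-set $S$ one shows, using $\delta_\ell(H)\ge (1/2-o(1))\binom{n}{k-\ell}$, that the number of \emph{absorbers} for $S$ (small matchings whose vertex set can be rearranged to include $S$) is at least $\Omega(n^{(k-1)k})$, and then a probabilistic selection plus union bound produces $M_{\text{abs}}$. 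Second, apply Pikhurko's asymptotic result \cite{pik} (or an almost-perfect matching variant) to $H':=H-V(M_{\text{abs}})$ to obtain a matching covering all but at most $\eps^2 n$ vertices of $H'$; the degree condition is preserved up to a negligible loss. Third, absorb the uncovered vertices using $M_{\text{abs}}$. The only subtlety is that being $\eps$-non-extremal should be used to boost the count of absorbers: without it, absorbers of the right parity might be scarce near the extremal configurations.

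\textbf{Extremal case.} Here I would fix the partition $V=A\cup B$ and the extremal hypergraph $H_0\in\mathcal H_{\text{ext}}(n,k)$ that $H$ is close to, and argue directly. Classify vertices of $H$ by how much their $\ell$-degrees deviate from the pattern dictated by $H_0$; vertices with large deviations form a small set $W$. One first removes a bounded matching that covers $W$, chosen carefully so that the remaining hypergraph still has the essentially-even/odd structure $H_0$ and the parities of $|A\setminus\cdot|$ and $|B\setminus\cdot|$ end up compatible with a perfect matching in $H_0$. Then on the remaining vertex set, one needs to find a perfect matching in a hypergraph that agrees with $H_0$ up to a tiny error. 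This boils down to a bipartite-style matching problem where each edge is classified by its intersection size with $A$, and the strict inequality $\delta_\ell(H)>\delta(n,k,\ell)$ forbids exactly the obstructions present in $H_0$. The parity constraints coming from $|A|$ being odd/even and $n/k$ being odd/even must be tracked throughout; this is where the four cases of \eqref{eq:nk1} show up.

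\textbf{Main obstacle.} The most delicate step is the extremal case: one must simultaneously handle the four parity regimes in the definition of $\mathcal H_{\text{ext}}(n,k)$ and the two extremal shapes $\mathcal B_{n,k}(A,B),\overline{\mathcal B}_{n,k}(A,B)$, and show that the strict degree improvement buys exactly one edge of the ``wrong'' parity (odd vs.\ even intersection with $A$), which is precisely what is needed to correct the global parity obstruction. Extending the $k\equiv 0\pmod 4$ argument of \cite{zhao} to general $k$ with $k/2\le\ell\le k-1$ requires handling the cases when $k/2$ is not an integer or has the ``wrong'' parity, where the analogue of the fractional matching / LP argument underpinning the absorbing lemma must be adapted so that enough absorbers of each intersection-parity type exist. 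A secondary obstacle is choosing the absorbing matching so that, after the almost-perfect-matching step, the residual set has size divisible by $k$ \emph{and} the right parity with $A$ and $B$ to be absorbable.
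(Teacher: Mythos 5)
Your high-level dichotomy---extremal versus non-extremal, with absorbing in the latter case---is exactly the paper's approach, and you correctly observe that non-extremality is needed to guarantee enough absorbers. However, you have misidentified where the new work lies and have glossed over the device that makes the non-extremal case go through for general $k$.

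On the extremal case: the paper does \emph{not} redo the parity analysis you describe. It simply cites Theorem~4.1 of~\cite{zhao} (Theorem~\ref{extthm} here), which was already proven there for \emph{all} $k$ and $1\le\ell\le k-1$, not just $k$ divisible by~$4$. So the ``most delicate step'' you flag is not an obstacle at all in this paper; the parity bookkeeping you worry about is inherited. What genuinely was restricted to $4\mid k$ in~\cite{zhao} is the \emph{non-extremal} stability argument, which you treat too lightly.

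On the non-extremal case: your outline says to ``prove an absorbing lemma'' giving $\Omega(n^{(k-1)k})$ absorbers for each $k$-set and then run the standard absorb/almost-cover/finish scheme. This conceals the real content. The paper must show that either $H$ is $\eps$-close to $\mathcal B_{n,k}$ or $\overline{\mathcal B}_{n,k}$, or one has $\Omega(n^{2k})$ absorbing $2k$-sets per $k$-set; and the mechanism is an auxiliary \emph{bipartite} graph $G$ on vertex classes $\binom{V(H)}{\lceil k/2\rceil}$ and $\binom{V(H)}{\lfloor k/2\rfloor}$, with an edge when the union is an edge of $H$. The paper proves (Lemma~\ref{lem:G}) that either $G$ is $\beta NN'$-close to a disjoint union of two complete bipartite graphs $B_{n,k}$, or there are enough absorbers, and then (Lemma~\ref{lem:GH}) that closeness of $G$ to $B_{n,k}$ forces closeness of $H$ to $\mathcal B_{n,k}$ or $\overline{\mathcal B}_{n,k}$. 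This last transfer is the genuinely new part of the paper---it replaces the hypergraph removal lemma + Keevash--Sudakov route used in~\cite{zhao} (which required $4\mid k$) with direct counting arguments, including a Kruskal--Katona-based lemma and parity decompositions of $\binom{n}{r}$. Your proposal contains no analogue of this structure-transfer step, so as written it has a gap precisely where the paper's contribution lies. A minor further point: for the almost-perfect matching you invoke Pikhurko, whereas the paper uses Markstr\"om--Ruci\'nski's Lemma~2 for $\ell\le k-2$ and Fact~2.1 of R\"odl--Ruci\'nski--Szemer\'edi for $\ell=k-1$; either could be made to work, but the degree hypothesis available after deleting $V(M)$ needs to be checked against whichever you choose.
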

In~\cite{zhao}, we proved Theorem~\ref{mainthm} in the case when $4$ divides $k$. Independently to this,
Czygrinow and Kamat~\cite{czy} proved Theorem~\ref{mainthm} in the case when $k=4$ and $\ell =2$.
To prove Theorem~\ref{mainthm}
we use several ideas and results from~\cite{zhao}. In particular, the so-called `extremal' case of Theorem~\ref{mainthm}
was proved in~\cite{zhao} for \emph{all} values of $k$.
However, in some parts of the proof of the `non-extremal' case we use a very different
approach to that in~\cite{zhao}. We discuss this in more detail in Section~\ref{presec}.

As explained before, the minimum $\ell$-degree condition in Theorem~\ref{mainthm} is best possible.
Theorem~\ref{mainthm} and \eqref{eq:nk1} together give the aforementioned result of R\"odl, Ruci\'nski and Szemer\'edi \cite{rrs}.

In general, the precise value of $\delta (n,k, \ell)$ is unknown because it is not known what value of $|A|$ maximizes  the minimum $\ell$-degree of $\mathcal B _{n,k} (A, B)$ (or $\overline{\mathcal B} _{n,k} (A, B)$). (See~\cite{zhao} for a discussion on this.)
However, in~\cite{zhao} we gave a tight upper bound on $\delta (n,4,2)$.

\section{Notation and preliminaries}
\label{sec2}
\subsection{Definitions and notation}
Given a set $X$ and  $r\in \mathbb N$, we write $\binom{X}{r}$ for the set of all $r$-element subsets ($r$-subsets, for short) of $X$. 
Given a set $S$ and an element $x$, we often write $S- \{x\}$ as $S-x$ and $S\cup \{x\}$ as $S+x$. Let $k \in \mathbb N$. A $k$-uniform hypergraph
$H$ consists of a set of \emph{vertices} $V(H)$ and a set of \emph{edges} $E(H) \subseteq \binom{V(H)}{k}$. So in the case when
$k=1$, $E(H) \subseteq V(H)$. (The notion of a $1$-uniform hypergraph will be used in Section~\ref{sectionnon}.)

Let $k, \ell \in \mathbb N$. Suppose $H=(V(H),E(H))$ is a $k$-uniform hypergraph.  
Let $\{v_1, \dots, v_{\ell} \}$ be an $\ell$-subset of $V(H)$. Often we write it as $v_1\dots v_{\ell}$ (i.e. we drop the brackets and commas), or simply $\u{v}$.
Given $\underline{v}\in \binom{V(H)}{\ell}$,  we write $N_H(\underline{v})$ or $N (\underline{v})$ to denote the \emph{neighborhood of $\u{v}$}, that is, the family of those $(k-\ell)$-subsets of $V(H)$ which, together with $\u{v}$, form an edge in $H$. Then $|N_H(\underline{v})| = d_H(\underline{v})$. Given a vertex $v \in V(H)$, we define $N_H (v)$ and $d_H (v)$ analogously.

We denote the \emph{complement of $H$} by $\overline{H}$. That is, $\overline{H} := (V(H), \binom{V(H)}{k}\setminus E(H))$.  Given a set $A \subseteq V(H)$, $H[A]$ denotes the $k$-uniform subhypergraph of $H$ \emph{induced by $A$}, namely, $H[A] := (A, E(H)\cap \binom{A}{k})$. 
Given $B \subseteq E(H)$, we define $H[B] := (V(H), B)$.

Let $A,B$ be sets and let $m$ be a positive real. Let $A\triangle B:= (A\setminus B) \cup (B\setminus A)$ denote the \emph{symmetric difference} of  $A$ and $B$. We write $A=B \pm m$ if $|A\triangle B|\leq m$.

Let $\eps>0$. Suppose that $H$ and $H'$ are $k$-uniform hypegraphs on $n$ vertices. We say that $H$ is \emph{$\eps$-close to $H'$}, and write $H= H'\pm \eps n ^{k}$, if $H$ becomes a copy of $H'$ after adding and deleting at most $\eps n^k$ edges. More precisely,  $H$ is $\eps$-close to $H'$ if there is an isomorphic copy $\tilde{H}$ of $H$ such that $V(\tilde{H}) = V(H')$ and $|E(\tilde{H})\triangle E(H')| \le \eps n^k$.

Given a graph $G$, $x \in V(G)$ and $Y \subseteq V(G)$, we denote by $d_G (x,Y)$ the number of vertices $y \in Y$ such that
$xy \in E(G)$. Given disjoint $A,B \subseteq V(G)$ we let $e(A,B)$ denote the number of edges in $G$ with one endpoint in 
$A$ and one endpoint in $B$. Further, we let $K_{A,B}$ denote the complete bipartite graph with vertex classes $A$ and $B$.

We will often write $0<a_1 \ll a_2 \ll a_3$ to mean that we can choose the constants
$a_1,a_2,a_3$ from right to left. More
precisely, there are increasing functions $f$ and $g$ such that, given
$a_3$, whenever we choose some $a_2 \leq f(a_3)$ and $a_1 \leq g(a_2)$, all
calculations needed in our proof are valid.
Hierarchies with more constants are defined in the obvious way.
Throughout the paper we omit floors and ceilings whenever this does not affect the argument.

\subsection{The extremal graph $\mathcal B _{n,k}$ and absorbing sets.}
Suppose that $n,k\in \mathbb N$ such that $n\geq k $. Let $A,B$ be a partition of a set of $n$ vertices.
Recall that $\mathcal B_{n,k}(A,B)$ is the $k$-uniform hypergraph with vertex set $A \cup B$ and edge set $E_{\text{odd}} (A,B)$, and its complement $\overline{\mathcal B}_{n,k}(A, B)$ has edge set $E_{\text{even}} (A,B)$.
(Note that ${\mathcal B}_{n,1}(A, B)$ has edge set $A$ and $\overline{\mathcal B}_{n,1}(A, B)$ has edge set $B$.)
When $|A|=\lfloor n/2 \rfloor$ and $|B|=\lceil n/2 \rceil$, we simply denote $\mathcal B_{n,k}(A,B)$ by $\mathcal B_{n,k}$, and $\overline{\mathcal B}_{n,k}(A, B)$ by $\overline{\mathcal B}_{n,k}$.

Following the ideas of R\"odl, Ruci\'nski and Szemer\'edi \cite{rrs2, rrs}, we define \emph{absorbing sets} as follows:
Given a $k$-uniform hypergraph $H$,  a set $S\subseteq V(H)$ is called an \emph{absorbing set for $Q\subseteq V(H)$}, if both $H[S]$ and $H[S\cup Q]$ contain perfect matchings. In this case, if the matching covering $S$ is $M$, we also say \emph{$M$ absorbs $Q$}.

\subsection{Useful results}\label{usesec}
When considering $\ell$-degree together with $\ell'$-degree for some $\ell'\neq \ell$, the following proposition is very useful (the proof is a standard counting argument, which we omit).

\begin{prop} \label{prop:deg}
Let $0\le \ell \le \ell' < k$ and $H$ be a $k$-uniform hypergraph. If $\d_{\ell'}(H)\geq x\binom{n- \ell'}{k- \ell'}$ for some $0\le x\le 1$, then $\d_{\ell}(H)\geq x\binom{n- \ell}{k- \ell}$.
\end{prop}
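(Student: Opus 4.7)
The plan is a standard two-way count. Fix an arbitrary $\ell$-subset $S \subseteq V(H)$; I will count in two ways the number $N$ of pairs $(S', e)$ with $|S'|=\ell'$, $e \in E(H)$, and $S \subseteq S' \subseteq e$.

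Counting by $S'$ first: there are $\binom{n-\ell}{\ell'-\ell}$ choices of an $\ell'$-set $S'$ containing $S$, and each such $S'$ is contained in $d_H(S') \geq x\binom{n-\ell'}{k-\ell'}$ edges, giving
$$N \;\geq\; \binom{n-\ell}{\ell'-\ell}\cdot x\binom{n-\ell'}{k-\ell'}.$$
Counting by $e$ instead: each of the $d_H(S)$ edges $e \supseteq S$ contains exactly $\binom{k-\ell}{\ell'-\ell}$ subsets $S'$ of size $\ell'$ with $S \subseteq S' \subseteq e$, yielding
$$N \;=\; d_H(S)\binom{k-\ell}{\ell'-\ell}.$$

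Combining the two bounds and applying the elementary identity $\binom{n-\ell}{\ell'-\ell}\binom{n-\ell'}{k-\ell'} = \binom{n-\ell}{k-\ell}\binom{k-\ell}{\ell'-\ell}$ (both sides count pairs consisting of a $(k-\ell)$-subset of an $(n-\ell)$-set together with a distinguished $(\ell'-\ell)$-subset of it), one cancels the factor $\binom{k-\ell}{\ell'-\ell}$ to conclude $d_H(S) \geq x\binom{n-\ell}{k-\ell}$. Since $S$ was an arbitrary $\ell$-set, this gives $\d_\ell(H) \geq x\binom{n-\ell}{k-\ell}$, as required.

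There is no genuine obstacle here: the argument is the routine double count that the authors chose to omit. The only matters requiring any care are invoking the correct binomial identity and verifying that the hypotheses $0\le\ell\le\ell'<k$ (together with $n$ large enough that $n-\ell' \geq k-\ell'$) ensure every binomial coefficient appearing in the proof is well-defined and strictly positive, so that the cancellation step is valid.
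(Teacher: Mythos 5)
Your double count is exactly the "standard counting argument" the authors say they omit, and it is correct: the binomial identity and the cancellation of $\binom{k-\ell}{\ell'-\ell}$ (positive since $0\le\ell'-\ell\le k-\ell$) are both valid, and the side condition you flag is just $n\ge k$, which holds whenever $H$ has any edges (and the claim is vacuous otherwise).
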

The following two results are applied in Section~\ref{lastlemma}. Given an $r$-uniform hypergraph $F=(V,E)$ with two distinct vertices $u, v \in V$, define $D_F(u, v)$ as the family of $(r+1)$-subsets $S\in \binom{V}{r+1}$ such that $u,v \in S$ 
and either $S- u \in E$ and $S-v \not\in E$, or  $S- u\not\in E$ and $S- v \in E$. Note that $D_F(u, v)= D_F(v, u) = D_{\overline{F}}(u,v)$, where $\overline{F}$ is the complement of $F$.

\begin{lemma}\label{lem:KK}
Given any $r \in \mathbb N$ and $\alpha >0$ there exists an $n_0 \in \mathbb N$ such that the following holds.
Let $F=(V, E)$ be an $r$-uniform hypergraph on $n \geq n_0$ vertices with edge density $\rho:= |E|/\binom{n}{r}\in [\a, 1 - \a]$. 
Then
\begin{equation}\label{eq:Nv}
\sum_{\{v, v'\}\in \binom{V}{2}} |D_F(v, v')| \ge \a (1-\a) \binom{n}{r+1}.
\end{equation}
In particular, there exists two vertices $v, v'\in V$ such that $| D_F(v, v') | \ge \a (1- \a) n^{r-1}/(r+1)!$.
\end{lemma}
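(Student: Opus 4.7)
The plan is to recast the sum on the left-hand side of \eqref{eq:Nv} as an edge-count in the Johnson graph $J(n,r)$ and then invoke its spectral gap. By a standard double counting, each $(r+1)$-subset $T \in \binom{V}{r+1}$ contributes exactly $e_T(r+1-e_T)$ to that sum, where $e_T := |\binom{T}{r} \cap E|$: $T \in D_F(v, v')$ for a pair $\{v, v'\} \subseteq T$ precisely when exactly one of $T - v, T - v'$ lies in $E$, and such pairs are in bijection with choices of an ``edge'' and a ``non-edge'' among the $r$-subsets of $T$. Hence
\[
\sum_{\{v, v'\} \in \binom{V}{2}} |D_F(v, v')| \;=\; \sum_{T \in \binom{V}{r+1}} e_T(r+1-e_T) \;=\; e_J(E, \overline E),
\]
where $e_J(E, \overline E)$ is the number of edges of the Johnson graph $J(n,r)$ (vertices: $r$-subsets of $V$; adjacency: sharing exactly $r-1$ elements) between $E$ and $\overline E := \binom{V}{r} \setminus E$, since each such Johnson-edge $\{R, R'\}$ corresponds uniquely to $T = R \cup R'$.

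It then remains to bound this cut from below. I would invoke the standard Poincar\'e / expander-mixing inequality
\[
e_J(E, \overline E) \;\geq\; (d - \lambda_2) \cdot \frac{|E|\,|\overline E|}{\binom{n}{r}},
\]
together with the classical eigenvalues of the Johnson graph: $J(n,r)$ is $d$-regular with $d = r(n - r)$ and second-largest eigenvalue $\lambda_2 = (r - 1)(n - r - 1) - 1 = r(n - r) - n$, giving spectral gap $d - \lambda_2 = n$. Writing $\rho = |E|/\binom{n}{r} \in [\alpha, 1 - \alpha]$, this produces $e_J(E, \overline E) \geq n\rho(1-\rho)\binom{n}{r}$. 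Since $\rho(1-\rho) \geq \alpha(1-\alpha)$ and $n\binom{n}{r} = \frac{n(r+1)}{n-r}\binom{n}{r+1} \geq (r+1)\binom{n}{r+1}$, we conclude
\[
\sum_{\{v, v'\}} |D_F(v, v')| \;\geq\; (r+1)\alpha(1-\alpha)\binom{n}{r+1},
\]
which is stronger than \eqref{eq:Nv}. The ``in particular'' clause then follows by averaging over the $\binom{n}{2}$ pairs: since $\binom{n}{r+1}/\binom{n}{2} = 2(n-2)(n-3)\cdots(n-r)/(r+1)!$ exceeds $n^{r-1}/(r+1)!$ for $n \geq n_0(r, \alpha)$ (after absorbing the spare factor $r+1$), the maximum of $|D_F(v, v')|$ meets the stated lower bound.

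The principal obstacle is the cut bound on $J(n,r)$. The spectral route is very clean but relies on citing the eigenvalues of the Johnson graph. A purely elementary approach through the identity $e_J(E, \overline E) = \sum_{W \in \binom{V}{r-1}} d_F(W)(n - r + 1 - d_F(W))$ runs into trouble because Cauchy--Schwarz only \emph{lower}-bounds $\sum_W d_F(W)^2$, whereas the present direction requires an \emph{upper} bound; any elementary proof would thus need a Kruskal--Katona-type compression or shadow argument, which is more delicate than the spectral input.
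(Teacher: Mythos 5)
Your argument is correct, and it takes a genuinely different route from the paper's. Both proofs open with the same double-counting identity: for each $(r+1)$-set $T$ with $e_T := |\binom{T}{r}\cap E|$ edges among its $r$-subsets, $T$ contributes exactly $e_T(r+1-e_T)$ to the left-hand side of \eqref{eq:Nv}, which in the paper is written as $m=\sum_{i=1}^r i(r+1-i)t_i$. From there the paths diverge. The paper discards the multiplicative factor $(r+1-i)$ to get $m\ge |E|(n-r)-(r+1)t_{r+1}$ and then controls the number of $(r+1)$-cliques $t_{r+1}$ via Lov\'asz's form of the Kruskal--Katona theorem, followed by an asymptotic computation with $O(n^r)$ error terms. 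You instead recognize $\sum_T e_T(r+1-e_T)$ as the size of the cut $e_J(E,\overline E)$ in the Johnson graph $J(n,r)$ (a correct reformulation, since the $r$-subsets inside a fixed $T$ form a $K_{r+1}$ in $J(n,r)$, and every Johnson edge $\{R,R'\}$ determines $T=R\cup R'$), and then invoke the Poincar\'e inequality $e_J(E,\overline E)\ge(d-\lambda_2)|E||\overline E|/\binom{n}{r}$ with the classical spectral gap $d-\lambda_2=r(n-r)-\big(r(n-r)-n\big)=n$. This gives the exact bound $m\ge n\rho(1-\rho)\binom{n}{r}\ge(r+1)\alpha(1-\alpha)\binom{n}{r+1}$, which is stronger than the paper's and carries no error term, at the price of citing the eigenvalues of the Johnson scheme rather than Kruskal--Katona. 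Two minor points to make the write-up airtight: (i) for $n\ge n_0(r)$ one should note that $(r-1)(n-r-1)-1$ really is the second-largest eigenvalue (the $J(n,r)$ spectrum $\{(r-j)(n-r-j)-j\}_{j=0}^r$ is decreasing in $j$ once $n>2r$); and (ii) the degenerate case $r=1$, which the paper handles by calling the second assertion trivial, also falls out of your scheme since $J(n,1)=K_n$ with spectral gap $n$, so no separate treatment is strictly needed. Your closing observation about the elementary identity $e_J(E,\overline E)=\sum_W d_F(W)(n-r+1-d_F(W))$ and the "wrong direction" of Cauchy--Schwarz is exactly why the paper reaches for Kruskal--Katona; the two approaches are the two standard ways of supplying the missing upper bound.
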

\begin{proof}
If $r=1$ then the second assertion is trivial.  If $r\geq 2$,
the second assertion follows by an averaging argument: if the first assertion holds, then there exist two vertices $v$ and $v'$ in $V$ such that 
\begin{align*}
| D_F(v, v') |  \ge \frac{\a(1-\a)}{\binom{n}{2}} \binom{n}{r+1}= \a(1-\a) \frac{2(n-2) \cdots (n-r)}{(r+1)!}  \ge \a(1-\a) \frac{n^{r-1}}{(r+1)!},
\end{align*}
since $n$ is sufficiently large.

We prove the first assertion by double counting and the Kruskal--Katona theorem. Let $m$ denote the  left-hand side of \eqref{eq:Nv}, that is, the number of $(r+1)$-subsets $S\subset V$ that contains two labeled vertices $v\ne v'$ such that exactly one of $S - v$ and $S- v'$ is in $E$.
For $0\le i\le r+1$, let $t_i$ denote the number of $(r+1)$-subsets of $V$ that span exactly $i$ edges of $F$. It is easy to see that
\begin{align*}
 |E| (n-r)  = \sum_{i=1}^{r+1} i t_i \  \ \text{and} \ \ m  = \sum_{i=1}^r  i (r+1 - i) t_i.
\end{align*}
Thus,
\begin{equation}\label{eq:m}
m\ge \sum_{i=1}^r i t_i = |E| (n -r) - (r+1) t_{r+1}.
\end{equation}
A version of the Kruskal--Katona theorem by Lov\'asz \cite{Lov} states that given a family $\mathcal{A}$ of $k$-element sets, if $|\mathcal{A}|> \binom{x}{k}$ for some real number $x$, then the size of its shadow $\partial \mathcal{A}$ is greater than $\binom{x}{k-1}$.  
This implies that if an $r$-uniform hypergraph has at most $\binom{x}{r}$ edges, then $t_{r+1}$, the number of the $(r+1)$-cliques in the hypergraph, is at most $\binom{x}{r+1}$. Since $|E|= \rho \binom{n}{r}\le \binom{\rho^{1/r} n + r -1}{r}$, we derive that $t_{r+1}\le \binom{\rho^{1/r} n + r-1}{r+1}$. Substituting this into \eqref{eq:m}, we have that
\begin{align*}
m &\ge  \rho \binom{n}{r}( n-r ) - (r+1) \binom{\rho^{1/r} n + r-1}{r+1} \\
&=  \rho \binom{n}{r+1}( r+1 ) -  \rho^{\frac{r+1}{r}} (r+1)  \binom{n}{r+1} - O(n^r) \\
&= \rho ( 1 - \rho^{1/r} ) ( r+1) \binom{n}{r+1} -  O(n^r).
\end{align*}
Since $\rho\in [\a, 1- \a]$, we have that
\[\rho ( 1 - \rho^{1/r} )\ge \min \{ \a (1 - \a^{1/r} ), (1-\a) ( 1 - (1-\a)^{1/r} ) \}.
\]
Using the fact that $1 - \a^{1/r} \ge (1-\a)/r$, this minimum is at least $\a (1-\a)/r$. 
As $n$ is sufficiently large, this gives that
$$
m \ge \a (1-\a) \frac{r+1}{r}\binom{n}{r+1} -   O(n^r) \ge \a(1-\a)\binom{n}{r+1}.
$$
\end{proof}

\begin{prop}\label{evensum}
For $r \in \mathbb N$, $0\le c\le 1$ and $n\to \infty$, 
\begin{align*}
\sum_{0\le i\le r, \, i \, \rm{even}} \binom{cn}{r- i} \binom{ (1-c)n }{i} & = \frac{n^r}{2r!} ( 1+ (2c-1)^r ) - O(n^{r-1}), \\
\sum_{0\le i\le r, \, i \,\rm{odd}} \binom{cn}{r- i} \binom{ (1-c)n }{i} & = \frac{n^r}{2r!} ( 1- (2c-1)^r ) - O(n^{r-1}).
\end{align*}

\end{prop}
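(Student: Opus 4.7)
The plan is to reduce the identity to the binomial theorem applied to $(cn + (1-c)n)^r$ and $(cn - (1-c)n)^r$, after replacing the falling-factorial expressions $\binom{cn}{r-i}\binom{(1-c)n}{i}$ with genuine monomials of the form $(cn)^{r-i}((1-c)n)^i$ up to an error of $O(n^{r-1})$.

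First I would observe that for each fixed $0 \le i \le r$,
\[
\binom{cn}{r-i}\binom{(1-c)n}{i} \;=\; \frac{(cn)^{r-i}\bigl((1-c)n\bigr)^{i}}{(r-i)!\, i!} + O(n^{r-1}),
\]
since $\binom{cn}{r-i} = (cn)^{r-i}/(r-i)! + O(n^{r-i-1})$ and similarly for the other factor, and the cross term contributes only $O(n^{r-1})$. Multiplying through by $r!$ gives
\[
r!\,\binom{cn}{r-i}\binom{(1-c)n}{i} = \binom{r}{i}(cn)^{r-i}\bigl((1-c)n\bigr)^{i} + O(n^{r-1}).
\]

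Next I would apply the binomial theorem in two forms:
\begin{align*}
n^r = \bigl(cn + (1-c)n\bigr)^r &= \sum_{i=0}^{r} \binom{r}{i}(cn)^{r-i}\bigl((1-c)n\bigr)^{i}, \\
(2c-1)^r n^r = \bigl(cn - (1-c)n\bigr)^r &= \sum_{i=0}^{r} (-1)^{i}\binom{r}{i}(cn)^{r-i}\bigl((1-c)n\bigr)^{i}.
\end{align*}
Adding these equations and dividing by $2$ isolates the sum over even $i$, while subtracting and dividing by $2$ isolates the sum over odd $i$:
\[
\sum_{\substack{0\le i\le r \\ i \text{ even}}} \binom{r}{i}(cn)^{r-i}\bigl((1-c)n\bigr)^{i} = \frac{n^r\bigl(1 + (2c-1)^r\bigr)}{2},
\]
with the analogous identity and sign for odd $i$.

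Finally, summing the approximation from the first step over $i$ of the appropriate parity (there are only $r+1$ terms, so the error remains $O(n^{r-1})$) and substituting the binomial identity yields
\[
\sum_{\substack{0\le i\le r \\ i \text{ even}}} \binom{cn}{r-i}\binom{(1-c)n}{i} = \frac{1}{r!}\cdot \frac{n^r\bigl(1+(2c-1)^r\bigr)}{2} + O(n^{r-1}),
\]
and likewise for the odd case. There is no real obstacle here; the only thing to keep track of is that $c$ and $r$ are fixed while $n \to \infty$, so the constants absorbed into the $O(n^{r-1})$ term depend only on $r$ (and hence are harmless).
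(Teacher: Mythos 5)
Your proof is correct and follows essentially the same route as the paper: approximate each product of binomial coefficients by the leading monomial with $O(n^{r-1})$ error, then split the sum over all $i$ into even and odd parts via the two binomial expansions of $(cn+(1-c)n)^r$ and $(cn-(1-c)n)^r$. The paper factors out $(cn)^r/r!$ and works with powers of $(1-c)/c$ rather than keeping the expansion in the symmetric form $(cn)^{r-i}((1-c)n)^i$ as you do, but this is purely cosmetic (your form even sidesteps the harmless division by $c$).
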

\begin{proof} Throughout the proof we assume that $0\le i\le r$. We observe that
\[
\binom{cn}{r-i} \binom{ (1-c)n }{i} = \frac{(cn)^{r-i} }{(r-i)!}\frac{(1-c)^{i} n^{i}}{ i! } - O(n^{r-1}) = \frac{(cn)^r}{r!} \binom{r}{i} \left(\frac{1-c}{c} \right)^i - O(n^{r-1}).
\]
Since 
\[
\left( \sum_{i \,\rm{even}} + \sum_{i \, \rm{odd}} \right) \binom{r}{i} \left(\frac{1-c}{c} \right)^i = \sum_i \binom{r}{i} \left(\frac{1-c}{c} \right)^i = \left(1 + \frac{1-c}{c} \right)^r = \frac{1}{c^r},  
\]
\[
\left( \sum_{i \,\rm{even}} - \sum_{i \, \rm{odd}} \right) \binom{r}{i} \left(\frac{1-c}{c} \right)^i = \sum_i (-1)^i \binom{r}{i} \left(\frac{1-c}{c} \right)^i = \left(1 - \frac{1-c}{c} \right)^r = \left(\frac{2c-1}{c}\right)^r,
\]
we have 
\begin{align*}
\sum_{i \,\rm{even}} \binom{r}{i} \left(\frac{1-c}{c} \right)^i & = \frac12 \left( \frac{1}{c^r}  + \left(\frac{2c-1}{c}\right)^r \right) \quad \text{and} \quad
\sum_{i \,\rm{odd}} \binom{r}{i} \left(\frac{1-c}{c} \right)^i & = \frac12 \left( \frac{1}{c^r}  - \left(\frac{2c-1}{c}\right)^r \right).
\end{align*}
The two desired equalities follow immediately.
\end{proof}

\section{Proof of Theorem~\ref{mainthm}}
Most of the paper is devoted to proving the following result.
\begin{thm}\label{nonexthm1}
Let $\eps >0$ and $k, \ell \in \mathbb N$ such that $k\ge 3$ and $k/2 \leq \ell \leq k-1$.
Then there exist $\a, \xi >0$ and $n_0 \in \mathbb N$ such that the following holds. 
Suppose that $H$ is a $k$-uniform hypergraph on $n \geq n_0$ vertices. If
$$\delta _{\ell} (H) \geq \left( \frac{1}{2}-\a \right) \binom{n-\ell}{k-\ell}$$
then  $H$ is $\eps$-close to $\mathcal B_{n,k}$ or $\overline{\mathcal B}_{n,k}$, or $H$ contains
a matching $M$ of size $|M| \le \xi n/k$ that absorbs any set $W\subseteq V(H) \setminus V(M)$ such that $|W| \in k\mathbb{N}$ with $|W| \le \xi^2 n$.
\end{thm}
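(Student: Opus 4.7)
The plan is to apply the absorbing method of R\"odl, Ruci\'nski and Szemer\'edi. Call a set $S\subseteq V(H)\setminus Q$ of size $sk$ (for a small constant $s=s(k)$) an \emph{absorber} for the $k$-subset $Q\subseteq V(H)$ if both $H[S]$ and $H[S\cup Q]$ contain perfect matchings. The argument splits into two parts: a structural claim saying that, unless $H$ is $\eps$-close to $\mathcal B_{n,k}$ or $\overline{\mathcal B}_{n,k}$, every $k$-subset $Q$ of $V(H)$ has at least $\g n^{sk}$ absorbers for some $\g=\g(\eps,k)>0$; and a probabilistic extraction that turns this abundance into the desired matching $M$.

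Given the structural claim, I would construct $M$ by now-standard means. Include each absorber independently in a family $\mathcal F$ with probability $p=\Theta(n^{-(sk-1)})$, chosen so that expected counts are correct. First-moment (Markov) bounds for $|\mathcal F|$ and for the number of vertex-overlapping pairs in $\mathcal F$, together with concentration (Chernoff or bounded-differences) for the pointwise count of absorbers of each $Q$, show that with positive probability $|\mathcal F|\le \xi n/(sk+1)$, at most $o(\xi n)$ pairs of members of $\mathcal F$ share a vertex, and every $k$-subset $Q$ retains at least $\xi^2 n$ absorbers in $\mathcal F$. Deleting one member from each overlapping pair and taking a perfect matching inside each surviving absorber produces $M$ with $|M|\le \xi n/k$. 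A given $W$ with $k\mid |W|\le \xi^2 n$ is partitioned into $|W|/k$ blocks of size $k$, each absorbed in turn using a distinct absorber from $\mathcal F$.

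The structural claim is the main obstacle, and I would prove it by contradiction. Proposition~\ref{prop:deg} upgrades the $\ell$-degree hypothesis to $\d_{\ell'}(H)\ge(1/2-\a)\binom{n-\ell'}{k-\ell'}$ for all $0\le \ell'\le \ell$; in particular the edge density of $H$, and the density of the link $N_H(\underline v)$ of each $\ell$-subset $\underline v$, are at least $1/2-\a$. If some $Q$ has fewer than $\g n^{sk}$ absorbers, the obstruction must be parity in nature: for a suitable bipartition $V(H)=A\cup B$, almost every edge of $H$ has a fixed parity of intersection with $A$. To extract $(A,B)$ I would apply Lemma~\ref{lem:KK} to the $(k-\ell)$-uniform link $F:=N_H(\underline v)$: the lemma either produces a pair $(v,v')$ with $|D_F(v,v')|$ at least of order $n^{k-\ell-1}$, which can be spliced with $Q$ to manufacture many absorbers of $Q$ (contradicting scarcity), or forces the density $\rho_F$ to lie outside $[\a',1-\a']$, so $F$ is $o(1)$-close to empty or to complete. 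Proposition~\ref{evensum} then identifies, via the formula $\tfrac12(1\pm(2c-1)^{k-\ell})$, a unique proportion $c=|A|/n$ consistent with the observed pattern of link densities; consistency of $c$ across all $\ell$-subsets yields a global bipartition $(A,B)$ for which $H$ is $\eps$-close to $\mathcal B_{n,k}(A,B)$ or $\overline{\mathcal B}_{n,k}(A,B)$, the desired contradiction.

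The hardest step is this local-to-global parity synchronisation: aggregating the pointwise dichotomies coming from Lemma~\ref{lem:KK} into one global partition of $V(H)$. The assumption $\ell\ge k/2$ is essential here, since Proposition~\ref{prop:deg} then provides enough joint structure at the $\ell$-level to align the $\ell$-subset-local partitions. I expect this synchronisation argument, rather than the probabilistic absorber selection, to take up the bulk of the proof.
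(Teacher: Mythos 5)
Your high-level plan (absorbing method with a structural dichotomy, followed by a probabilistic extraction of $M$) matches the paper's, and you correctly flag Proposition~\ref{prop:deg}, Lemma~\ref{lem:KK} and Proposition~\ref{evensum} as the relevant tools, as well as the "local-to-global synchronisation" as the crux. But your proposed mechanism for the structural claim has a genuine gap. Applying Lemma~\ref{lem:KK} directly to a link $F := N_H(\underline v)$ (whose density is at least $1/2-\a$ by hypothesis) can only ever give one of two outcomes: either a pair $(v,v')$ with $|D_F(v,v')|$ large, or $\rho_F$ outside $[\a',1-\a']$, and the latter forces $\rho_F > 1-\a'$, i.e.\ $F$ nearly complete. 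You never explain how a parity-flipping $D_F$-pair inside a link can be "spliced with $Q$" to build absorbers; the paper's absorber construction (Claim~\ref{clm:ab}) instead uses pairs of $r$-tuples $\underline a,\underline b$ with large $|N_H(\underline a)\cap N_H(\underline b)|$, a qualitatively different quantity. And if the second branch occurs for every $\underline v$, you conclude $H$ is nearly complete, which is neither $\mathcal B_{n,k}$ nor $\overline{\mathcal B}_{n,k}$; the route never surfaces a candidate bipartition $(A,B)$ of the ground set.

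What is missing is the auxiliary bipartite graph $G(H)$ on vertex classes $\binom{V(H)}{r}$ and $\binom{V(H)}{r'}$ (with $r=\lceil k/2\rceil$, $r'=\lfloor k/2\rfloor$), joining two tuples precisely when their union is an edge of $H$. The paper's Lemma~\ref{lem:G} shows that either $G(H)$ is close to the disjoint union of two complete balanced bipartite graphs $B_{n,k}$, or every $k$-set $Q$ has $\Omega(n^{2k})$ absorbing $2k$-sets; Lemma~\ref{lem:GH} then transfers the structure of $G(H)$ back to $H$. The synchronisation you identify as hardest is carried out inside Lemma~\ref{lem:GH}: fix a single reference vertex $v_0$, classify all other vertices via the sizes of $C(v,v_0)$ and $D(v,v_0)$ (counting $r$-sets through $v$ and $v_0$ that agree or disagree under the colouring $X_1,X_2$ of $\binom{V}{r}$ coming from $G$'s near-bipartition), and iterate to get the partition $V_1,V_2$. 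It is at this stage — not on the links of $H$ — that Lemma~\ref{lem:KK} and Proposition~\ref{evensum} are applied, to the hypergraphs $(V,X_1)$ and $(V_1, X_1\cap \binom{V_1}{r})$. Without the intermediate object $G(H)$, the pointwise dichotomies you extract at each $\ell$-subset have no shared coordinate system to be aligned in, and I do not see how to make the synchronisation go through.
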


We prove Theorem~\ref{nonexthm1} in Section~\ref{sectionnon}. Once Theorem~\ref{nonexthm1} is proven, we
can derive Theorem~\ref{mainthm} in the same way as described in~\cite{zhao}. For completeness, we include the proof
here.
\begin{thm}\cite[Theorem 4.1]{zhao}\label{extthm}
Given $1\le \ell \le k-1$, there exist $\eps > 0$ and $n_0 \in \mathbb{N}$ such that the following holds. Suppose that $H$ is a $k$-uniform hypergraph on $n\ge n_0$ vertices such that $n$ is divisible by $k$. If  $\delta_{\ell} (H) >  \d(n, k, \ell)$ and $H$ is $\eps$-close to $\mathcal B_{n,k}$ or $\overline{\mathcal B}_{n,k}$, then $H$ contains a perfect matching.
\end{thm}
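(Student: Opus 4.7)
The plan is to exploit the near-extremal structure of $H$ together with the strict inequality $\delta_\ell(H) > \delta(n,k,\ell)$ to overcome the parity obstruction that prevents the hypergraphs in $\mathcal{H}_{\text{ext}}(n,k)$ from having perfect matchings.

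First I would fix the bipartition. Since $H$ is $\eps$-close to $\mathcal B_{n,k}(A_0,B_0)$ or $\overline{\mathcal B}_{n,k}(A_0,B_0)$ with $|A_0|=\lfloor n/2\rfloor$, choose a partition $(A,B)$ of $V(H)$ with $|A|=\lfloor n/2\rfloor$ that minimises the number of ``wrong-parity'' edges (i.e.\ edges of $H$ lying in the ``other'' family than the one $H$ is close to). Call edges of $H$ lying in the unexpected parity class \emph{bad}, the rest \emph{good}. By the stability of the minimiser and the $\eps$-closeness, there are only $O(\eps n^k)$ bad edges, $|A|=\lfloor n/2\rfloor$, and after one round of local swaps of individual vertices between $A$ and $B$, all but $O(\eps^{1/k}n)$ vertices (call them \emph{exceptional}) have almost all neighbourhoods of the expected parity.

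Next I would carry out the parity bookkeeping. Focus on the case where $H$ is $\eps$-close to $\overline{\mathcal B}_{n,k}$; the other case is symmetric. Any perfect matching $M$ must satisfy $\sum_{e\in M}|e\cap A|\equiv |A|\pmod 2$, so if $|A|$ has the ``extremal'' parity from the definition of $\mathcal H_{\text{ext}}(n,k)$, any perfect matching of $H$ must use an odd number of bad edges. The analogous constraint holds in the $\mathcal B_{n,k}$-case, with the parity of $n/k$ also entering. The strict condition $\delta_\ell(H)>\delta(n,k,\ell)$ means that some $\ell$-set attains strictly more edges than it does in every hypergraph of $\mathcal H_{\text{ext}}(n,k)$; since the extremal examples saturate the good edges, this additional degree \emph{must} be supplied by bad edges, so bad edges exist at useful locations.

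Then I would remove a small parity-correcting matching $M_0$ of constant size: one or three carefully chosen bad edges, arranged so that they also cover every exceptional vertex. The residual hypergraph $H':=H[V(H)\setminus V(M_0)]$ is still $O(\eps)$-close to $\overline{\mathcal B}_{|V(H')|,k}$ on the induced partition $(A',B'):=(A\setminus V(M_0),B\setminus V(M_0))$, the sizes $|A'|,|B'|$ now have the ``correct'' parities, and $\delta_\ell(H')\ge (1/2-o(1))\binom{|V(H')|-\ell}{k-\ell}$. Finally I would build a perfect matching of $H'$ using only good edges: partition the remaining $A'$ and $B'$ into suitable $k$-tuples with even intersection with $A'$, picking them greedily and using the high codegree to guarantee that each required $k$-tuple is an edge; when only a constant number of vertices remain a direct Hall/greedy argument finishes the matching, and $M_0$ is appended to yield a perfect matching of $H$.

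The main obstacle will be Step~3, where the strict inequality must be turned into usable bad edges incident to the \emph{specific} $\ell$-sets needed to simultaneously cover the exceptional vertices and fix the parity. This requires splitting into cases according to the parities of $k$, $n/k$, and $|A|$, and whether $H$ is close to $\mathcal B_{n,k}$ or $\overline{\mathcal B}_{n,k}$, and matching each case to the corresponding extremal hypergraph in $\mathcal H_{\text{ext}}(n,k)$ to extract where the ``extra'' degree must sit. The rest of the argument is then routine greedy matching supported by the codegree condition supplied by Proposition~\ref{prop:deg}.
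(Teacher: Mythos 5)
The theorem you are proving is cited here from reference~\cite{zhao} (``Part I'' of this pair of papers) and no proof appears in the present paper, so there is no in-paper argument to compare against word-for-word. That said, your outline does capture the right \emph{framework}: fix a near-optimal bipartition, distinguish good and bad (wrong-parity) edges, exploit the parity obstruction, use the strict degree hypothesis to locate bad edges, and finish greedily. This is broadly the shape of the argument in~\cite{zhao}, so your instincts about which ingredients matter are sound.

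However, there is a concrete internal inconsistency in your Step~3. You allow $O(\eps^{1/k}n)$ exceptional vertices after the cleanup in Step~1, yet you then propose a ``constant size'' parity-correcting matching $M_0$ (``one or three carefully chosen bad edges'') that is simultaneously supposed to ``cover every exceptional vertex.'' A constant number of edges covers $O(1)$ vertices, not $\Theta(\eps^{1/k}n)$ of them, so you cannot have both. In the actual proof these two tasks are separated: first one removes a small but linear-size matching to absorb the exceptional vertices (this is where one must track carefully how the removals change $|A'|$ and $|B'|$, since each removed edge can alter the parity balance), and only \emph{afterwards} does one insert a constant number of bad edges to repair the residual parity. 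Merging them into a single constant-size $M_0$ as written cannot work unless the number of exceptional vertices is already $O(1)$, which the stability estimate you invoke does not give you.

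A second, softer issue is your claim that ``this additional degree \emph{must} be supplied by bad edges.'' The inequality $\delta_\ell(H)>\delta(n,k,\ell)$ compares the minimum $\ell$-degree of $H$ to the minimum $\ell$-degree of the best extremal example; it does not by itself force any particular $\ell$-set of $H$ to have more good edges than it has in $\overline{\mathcal B}_{n,k}$. To extract a usable bad edge, one must first locate an $\ell$-set $S$ that \emph{attains} (close to) the minimum $\ell$-degree in the corresponding extremal configuration, argue that because $H$ is close to that configuration the good $\ell$-degree of $S$ in $H$ cannot exceed the extremal value by much, and only then conclude from $d_H(S)>\delta(n,k,\ell)$ that $S$ lies in many bad edges at a place of your choosing. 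You do flag the ensuing case analysis (parities of $k$, $n/k$, $|A|$, and which of $\mathcal B_{n,k}$ or $\overline{\mathcal B}_{n,k}$ we are near) as ``the main obstacle,'' which is correct, but the missing reduction to a minimizing $\ell$-set is the specific lever that makes the obstacle tractable and should be made explicit.

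In short: the high-level plan is the right one, but you should split the exceptional-vertex absorption (a $\ll n$ but not constant-size matching) from the parity correction (constant-size), track how the former perturbs the class sizes and hence the parity target, and route the ``strict inequality $\Rightarrow$ bad edges'' implication through a minimizing $\ell$-set of the relevant extremal hypergraph rather than asserting it directly.
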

Theorem~\ref{extthm} ensures a perfect matching when our hypergraph $H$ is `close' to one of the
`extremal' hypergraphs $\mathcal B_{n,k}$ and $\overline{\mathcal B}_{n,k}$. When $H$ is non-extremal
we will apply the following result of Markstr\"om and Ruci\'nski~\cite{mark} to ensure an `almost' perfect matching
in $H$.
\begin{thm}\cite[Lemma 2]{mark}\label{marklemma}
For each integer $k\ge 3$, every $1 \leq \ell \leq k-2$ and every $\gamma >0$ there exists an  $n_0 \in \mathbb N$
such that the following holds.
Suppose that $H$ is a $k$-uniform hypergraph on $n\geq n_0$ vertices such that
$$\delta _{\ell} (H) \geq \left(\frac{k-\ell}{k} - \frac{1}{k^{(k-\ell)}}+\gamma \right)\binom{n-\ell}{k-\ell}.$$
Then $H$ contains a matching covering all but at most $\sqrt{n}$ vertices.
\end{thm}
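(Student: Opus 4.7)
The plan is to prove the lemma by contradiction via a maximum matching argument. Let $M$ be a maximum matching of $H$, set $U := V(H)\setminus V(M)$, and write $u:=|U|$; if the conclusion fails then $u>\sqrt n$, and the goal is to violate the $\ell$-degree hypothesis.

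The first step is to extract structural constraints from the maximality of $M$. Certainly $H[U]$ contains no edge, but more usefully, no \emph{2-augmenting configuration} exists: for every $e\in M$ and every pair of distinct vertices $v_1,v_2\in e$, there are no disjoint $T_1,T_2\in\binom{U}{k-1}$ with $T_1\cup\{v_1\},\,T_2\cup\{v_2\}\in E(H)$, as such a pair would let us replace $e$ in $M$ by two new edges to produce a strictly larger matching. Setting $Y_v:=\{T\in\binom{U}{k-1}:T\cup\{v\}\in E(H)\}$ for $v\in V(M)$, a short pigeonhole argument---any fixed $(k-1)$-subset of $U$ meets at most $(k-1)\binom{u-1}{k-2}$ others---forces, for each $e\in M$, at most one vertex $v_e^*\in e$ to satisfy $|Y_{v_e^*}|>(k-1)\binom{u-1}{k-2}$.

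The second step is a degree-sum estimate. Let $X_j$ denote the number of edges of $H$ meeting $U$ in exactly $j$ vertices (the term $X_k$ vanishes by the edge-free interior). Summing $d_H(S)$ over $S\in\binom{U}{\ell}$ and reorganising by $j$ gives
$$\binom{u}{\ell}\delta_\ell(H)\;\leq\;\sum_{j=\ell}^{k-1}\binom{j}{\ell}X_j.$$
For $\ell\leq j\leq k-2$ I use the trivial bound $X_j\leq\binom{u}{j}\binom{n-u}{k-j}$, while the structural consequence above gives
$$X_{k-1}\;=\;\sum_{v\in V(M)}|Y_v|\;\leq\;|M|\binom{u}{k-1}+(k-1)^2|M|\binom{u-1}{k-2},$$
a factor $k$ stronger than the naive estimate, since only one vertex per $M$-edge contributes a full $\binom{u}{k-1}$. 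Substituting $|M|\leq(n-u)/k$, reassembling the $j\leq k-2$ sum using Vandermonde's identity, and dividing by $\binom{u}{\ell}$ produces an upper bound on $\delta_\ell(H)/\binom{n-\ell}{k-\ell}$ that is strictly decreasing in $u/n$; comparing with the hypothesis then forces $u$ to be small.

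The main obstacle is to extract the \emph{sharp} constant $\tfrac{k-\ell}{k}-\tfrac{1}{k^{k-\ell}}$ together with the strong quantitative conclusion $u\leq\sqrt n$ rather than merely $u=O(n)$. A single application of the 2-augmentation bound above only yields $u\leq c_0 n$ for some $c_0=c_0(k,\ell,\gamma)<1$; to push $u$ all the way down to $\sqrt n$ one must also forbid more general \emph{$s$-augmentations} (replacements of $s$ edges of $M$ by $s+1$ edges of $H$ covering $V(\text{removed})\cup U'$ for some $U'\subset U$) for $s=2,3,\dots$, each imposing an additional structural constraint that reduces the relevant $X_j$ by a further factor of $1/k$. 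The factor $1/k^{k-\ell}$ appearing in the hypothesis cleanly packages the cumulative savings from forbidding augmentations of all admissible sizes, and the $\sqrt n$ threshold is the point below which no further augmentation is possible by pure pigeonhole on the $(k-1)$-subsets of $U$; the bookkeeping to make the savings compose to exactly $\tfrac{1}{k^{k-\ell}}$ is the technical heart of the argument.
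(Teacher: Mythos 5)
Note first that the paper does not prove this statement at all: it is imported verbatim from Markstr\"om and Ruci\'nski \cite{mark} (their Lemma 2), the only added content being the remark that their proof, stated for $1\le \ell<k/2$, goes through for all $1\le\ell\le k-2$. So the benchmark is the argument in \cite{mark}, and your outline would have to reproduce something of that strength; as written it does not. Your Step 1 (no $2$-augmentation, hence at most one vertex per matching edge with large link into $\binom{U}{k-1}$) and the degree-sum identity $\binom{u}{\ell}\delta_\ell(H)\le\sum_{j=\ell}^{k-1}\binom{j}{\ell}X_j$ are correct, but the resulting bound provably cannot ``force $u$ to be small'' in the sense required. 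Dividing by $\binom{u}{\ell}$ and using Vandermonde, your estimates give (with $u=cn$, $d=k-\ell$) roughly $\delta_\ell(H)\le\bigl[\,1-c^{d}-\tfrac{k-1}{k}\,d\,(1-c)c^{d-1}\bigr]\binom{n-\ell}{k-\ell}$, which tends to $\binom{n-\ell}{k-\ell}$ as $c\to 0$: for sublinear $u$ the dominant term in your sum is the trivial bound $X_\ell\le\binom{u}{\ell}\binom{n-u}{k-\ell}$, which already essentially matches the hypothesis, so no contradiction arises for any $u$ below a constant fraction of $n$ (for $k=3$, $\ell=1$ the contradiction appears only for $u\gtrsim 0.37n$). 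Thus the argument you actually carry out yields $u\le c_0n$ and nothing more, as you concede in your last paragraph.

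The entire content of the lemma --- the specific constant $\tfrac{k-\ell}{k}-\tfrac{1}{k^{k-\ell}}$ and the bound $\sqrt n$ --- therefore sits in the part you defer as ``bookkeeping,'' and the mechanism you propose for it is not substantiated. For $u=o(n)$ one must cut the term $X_\ell$ (edges meeting $U$ in exactly $\ell$ vertices) by a constant factor bounded away from $1$, from roughly $\binom{u}{\ell}\binom{n}{k-\ell}$ down to roughly $\bigl(\tfrac{k-\ell}{k}-\tfrac{1}{k^{k-\ell}}\bigr)\binom{u}{\ell}\binom{n}{k-\ell}$; you give no argument that forbidding $s$-augmentations saves a factor $1/k$ per value of $s$, nor why such savings would compose multiplicatively to exactly $k^{-(k-\ell)}$ (in the extremal picture that factor is the proportion of $(k-\ell)$-subsets of $V(M)$ hitting one designated vertex in each of $k-\ell$ distinct matching edges, which points to a one-designated-vertex-per-edge analysis of the $(k-\ell)$-sets completing an $\ell$-set of $U$, not to an iterated product of per-$s$ savings on the $X_j$). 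The claim that $\sqrt n$ ``is the point below which no further augmentation is possible by pure pigeonhole on the $(k-1)$-subsets of $U$'' is likewise unsupported: your Step 1 pigeonhole already works once $u$ exceeds a constant depending on $k$, so it cannot be the source of the $\sqrt n$ threshold. In short, the framework (maximum matching, forbidden augmentations, degree-sum) is reasonable, but the step that distinguishes this lemma from a routine count is missing; to close the gap you would need to carry out, or simply quote, the actual proof of Lemma 2 in \cite{mark} and check it for $\ell\le k-2$, which is exactly what the present paper does.
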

In~\cite{mark}, Markstr\"om and Ruci\'nski only stated Theorem~\ref{marklemma} for $1\leq \ell <k/2$. In fact, their proof works for all values of $\ell$ such that $1 \leq \ell \leq k-2$. In the case when $\ell = k-1$, we need a result of R\"odl, Ruci\'nski and Szemer\'edi~\cite[Fact 2.1]{rrs}: Suppose $H$ is a $k$-uniform hypergraph on $n$ vertices. If $\delta_{k-1}(H)\ge n/k$, then $H$ contains a matching covering all but at most $k^2$ vertices in $H$.

\medskip

\noindent
{\bf Proof of Theorem~\ref{mainthm}.} 
Let $\varepsilon$ be as in Theorem~\ref{extthm} and $\a, \xi$ be as in Theorem~\ref{nonexthm1}.
That is,
$$0<\a , \xi \ll \eps \ll 1/k.$$
Assume that $k/2\le \ell \le k-1$. Suppose that $n$ is sufficiently large and $k$ divides $n$. Consider any $k$-uniform hypergraph $H$ on $n$ vertices such that 
$$\delta _\ell (H) >\delta (n,k,\ell) .$$

By the definition of $\mathcal{H}_{\text{ext}} (n,k)$, there exists a $k$-uniform hypergraph $\mathcal{B} _{n,k}(A,B)\in \mathcal{H}_{\text{ext}} (n,k)$ with $|A| \in \{\lfloor n/2 \rfloor, \lfloor n/2 \rfloor +1\}$. Clearly $\delta_{k-1}(\mathcal{B}_{n, k}(A,B))\ge n/2 - k$. Thus, by Proposition~\ref{prop:deg}, $\delta_{\ell}(\mathcal{B}_{n, k}(A,B)) \geq (1/2 -\alpha)\binom{n-\ell}{k-\ell}$. Consequently $\delta _{\ell} (H) \geq (1/2- \a) \binom{n-\ell}{k-\ell}$. Theorem~\ref{nonexthm1} implies that
either $H$ is $\eps$-close to $\mathcal B_{n,k}$ or $\overline{\mathcal B}_{n,k}$ or $H$ contains
a matching $M$ of size $|M| \le \xi n/k$ that absorbs any set $W\subseteq V(H) \setminus V(M)$ such that $|W| \in k\mathbb{N}$ with $|W| \le \xi^2 n$.
In the former case Theorem~\ref{extthm} implies that $H$ contains a perfect matching. In the latter case set $H':=H[V(H)\setminus V(M)]$ and $n':=|V(H')|$.
Since $\ell\ge k/2$, $\alpha, \xi \ll 1/k$ and $n$ is sufficiently large,
$$\delta _{\ell} (H') \geq \delta _{\ell} (H) - |V(M)|\binom{n}{k-\ell-1} \geq
\left(\frac{k-\ell}{k} - \frac{1}{k^{(k-\ell)}}+\a \right)\binom{n'-\ell}{k-\ell}.$$
Therefore, if $\ell \leq k-2$, Theorem~\ref{marklemma} implies that $H'$ contains a matching $M'$ covering all but at most $\sqrt{n'}$ vertices in $H'$. If $\ell=k-1$, then since $\delta _{\ell} (H') \geq n'/k$, Fact 2.1 from~\cite{rrs} implies that $H'$  contains a matching $M'$ covering all but at most $k^2$ vertices in $H'$.
In both cases set $W:=V(H')\backslash V(M')$. Then $|W|\leq \sqrt{n'} \le \xi^2 n$. By definition of $M$, there is a matching $M''$ in $H$ which covers
$V(M)\cup W$.  Hence, $M' \cup M''$ is a perfect matching of $H$, as desired.
\endproof

\section{Outline of the proof of Theorem~\ref{nonexthm1}}\label{presec}
In~\cite{zhao} we proved Theorem~\ref{nonexthm1} in the case when $k$ is divisible by $4$. In this section
we give an overview of the proof of Theorem~\ref{nonexthm1} and explain how our method differs to that used in~\cite{zhao}.
\subsection{The method used in~\cite{zhao}}
Let $k \in \mathbb N$ be divisible by $4$. Consider a $k$-uniform hypergraph $H$ as in Theorem~\ref{nonexthm1}.
Define the graph $G'$ with vertex set $\binom{V(H)}{k/2}$ in which two vertices 
$\u{x}, \u{y} \in V(G')$ are adjacent if and only if $\u{x}\cup \u{y} \in E(H)$. 
Set $N:=|G'|$.
In~\cite{zhao} the proof splits into two main steps.

\medskip

\noindent
{\bf Step 1:} We prove that $G'$ or $\overline{G'}$ is `close' to $K_{N/2,N/2}$ or $H$ contains the matching $M$ as desired in Theorem~\ref{nonexthm1}. 

\noindent
{\bf Step 2:} If $G'$ or $\overline{G'}$ is `close' to $K_{N/2,N/2}$ then we prove that $H$ is $\eps$-close to $\mathcal B_{n,k}$ or $\overline{\mathcal B}_{n,k}$.

\medskip

\noindent
Notice that we cannot adopt quite the same approach as above to prove Theorem~\ref{nonexthm1} for \emph{all} values of $k$. Indeed,
to define $G'$ we require that $k$ is even. Moreover, the proof of Step 2 in~\cite{zhao} uses that $k$ is divisible by $4$:
Since $G'$ or $\overline{G'}$ is `close' to $K_{N/2,N/2}$, we obtain a `natural' partition $R,B$ of $V(G')=\binom{V(H)}{k/2}$
where $|R|=|B|=N/2$. Consider a complete $k/2$-uniform hypergraph $K$ whose vertex set is $V(H)$. Thus,
$E(K)=\binom{V(H)}{k/2}$. Hence, we can view the partition $R,B$ of $\binom{V(H)}{k/2}$ as a $2$-coloring of $E(K)$.
We then apply the hypergraph removal lemma (see e.g.~\cite{gowers, rodlskokan}) together with a  result of Keevash and Sudakov~\cite{keesud}
to obtain structure in $K$.
(We show that $K[R]$ or $K[B]$ is `close' to $\mathcal B_{n,k/2}$.)
 This structure in $K$ together with the fact that $G'$ or $\overline{G'}$ is `close' to $K_{N/2,N/2}$ implies that 
$H$ is $\eps$-close to $\mathcal B_{n,k}$ or $\overline{\mathcal B}_{n,k}$. Crucially, the result of Keevash and Sudakov concerns 
hypergraphs of even uniformity. Thus, we require that $K$ has even uniformity and hence, that $k$ is divisible by $4$.

\subsection{The new method}
Let $H$ be a $k$-uniform hypergraph on $n$ vertices as in Theorem~\ref{nonexthm1}. To prove Theorem~\ref{nonexthm1} we introduce a
bipartite analog of $G'$. Set $r := \lceil k/2 \rceil$, $r' := \lfloor k/2 \rfloor$, $X^r:=\binom{V(H)}{r}$ and $Y^{r'}:=\binom{V(H)}{r'}$. Further, let $N:=\binom{n}{r}$ and $N':=\binom{n}{r'}$.
Define the bipartite graph $G$ as follows:
$G$ has vertex classes $X^r$ and $Y^{r'}$. Two vertices
$x_1\dots x_r \in X^r$ and $y_1\dots y_{r'} \in Y^{r'}$ are adjacent in $G$ if and only if $x_1 \dots x_ry_1 \dots y_{r'} \in E(H)$.
The proof again splits into two main parts.
\medskip

\noindent
{\bf Step 1:} We prove that $G$ is `close' to the disjoint union of two copies of $K_{N/2, N'/2}$ or $H$ contains the matching $M$ as desired in Theorem~\ref{nonexthm1}.

\noindent
{\bf Step 2:} If $G$ is `close' to the disjoint union of two copies of $K_{N/2, N'/2}$ then we prove that $H$ is $\eps$-close to $\mathcal B_{n,k}$ or $\overline{\mathcal B}_{n,k}$.

\medskip

\noindent
Step 1 can be proved using a similar approach to the corresponding step in~\cite{zhao}.
Step 2, however, is tackled in a different way. Indeed, we do not consider an auxiliary hypergraph $K$
as in~\cite{zhao}. Instead, we obtain structure in $H$ through direct arguments on the graph $G$.

\section{Proof of Theorem~\ref{nonexthm1}}\label{sectionnon}
In this section we prove Theorem~\ref{nonexthm1}. 
Let $\alpha >0$ and $k, \ell \in \mathbb N$ such that $k\ge 3$ and $k/2 \leq \ell \leq k-1$. Given a $k$-uniform hypergraph $H$ on $n$ vertices such that $\delta _{\ell} (H) \geq \left( \frac{1}{2}-\a \right) \binom{n-\ell}{k-\ell}$, by Proposition~\ref{prop:deg}, we have $\delta _{r} (H) \geq \left( \frac{1}{2}- \a \right) \binom{n-r}{k-r}$ where $r := \lceil k/2 \rceil$. Hence, in order to prove Theorem~\ref{nonexthm1} it suffices to prove the following result.

\begin{thm}\label{thm:extabs}  
Given any $\eps >0$ and integer $k \geq 3$, there exist $\a, \xi >0$ and $n_0 \in \mathbb N$ such that the following holds. Set $r := \lceil k/2 \rceil$. Suppose that $H$ is a $k$-uniform hypergraph on $n \geq n_0$ vertices. If
$$\delta _{r} (H) \geq \left( \frac{1}{2}-\a \right) \binom{n - r}{k- r}$$
then $H$ is $\eps$-close to $\mathcal B_{n,k}$ or $\overline{\mathcal B}_{n,k}$, or $H$ contains
a matching $M$ of size $|M| \le \xi n/k$ that absorbs any set $W\subseteq V(H) \setminus V(M)$ such that $|W| \in k\mathbb{N}$ with $|W| \le \xi^2 n$.
\end{thm}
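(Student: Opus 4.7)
The plan is to follow the two-step strategy outlined in Section~\ref{presec}. Introduce the auxiliary bipartite graph $G$ with vertex classes $X^r = \binom{V(H)}{r}$ and $Y^{r'} = \binom{V(H)}{r'}$ where $\u x \u y \in E(G)$ iff $\u x \cap \u y = \emptyset$ and $\u x \cup \u y \in E(H)$; the minimum degree hypothesis together with Proposition~\ref{prop:deg} gives $d_G(\u x) \ge (1/2 - \a)\binom{n-r}{r'}$ for every $\u x \in X^r$ and $d_G(\u y) \ge (1/2 - \a)\binom{n-r'}{r}$ for every $\u y \in Y^{r'}$.

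First I would record a standard absorbing lemma. For a $k$-set $W\subseteq V(H)$, fix any partition $W = W_x \cup W_y$ with $|W_x|=r$, $|W_y|=r'$; call a $k$-set $S = X \cup Y$ (with $|X| = r$, $|Y| = r'$, disjoint from $W$) an \emph{absorber for $W$} if all three of $X\cup Y$, $W_x \cup Y$, and $X \cup W_y$ lie in $E(H)$. Then $\{X\cup Y\}$ is a perfect matching on $S$ and $\{W_x \cup Y,\, X \cup W_y\}$ is a perfect matching on $S \cup W$. Using the R\"odl--Ruci\'nski--Szemer\'edi probabilistic selection argument, if every $k$-set $W$ admits at least $\eta n^k$ absorbers, one obtains a matching $M$ of size at most $\xi n/k$ that absorbs any $W \subseteq V(H)\setminus V(M)$ with $|W| \le \xi^2 n$, $k\mid |W|$, by absorbing each $k$-subset of an equipartition of $W$ using a distinct absorber in $M$.

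The substantive part of Step~1 is then to show that either every $k$-set admits $\Omega(n^k)$ absorbers, or $G$ is $\eps'$-close to the disjoint union $K_{N/2,N'/2} \sqcup K_{N/2,N'/2}$. The number of absorbers of $W$ is essentially the number of $G$-edges $XY$ with $X \in N_G(W_y)$ and $Y \in N_G(W_x)$. Since both neighborhoods are of size at least $(1/2-\a)\binom{n}{\cdot}$, a shortage of absorbers for many $W$ means that for many such pairs of ``large'' vertex subsets of $X^r$ and $Y^{r'}$ the bipartite density of $G$ between them is nearly zero. Applying Lemma~\ref{lem:KK} to an appropriate auxiliary hypergraph that records, for pairs of $r$-sets (resp.\ $r'$-sets), the symmetric difference of their $G$-neighborhoods, one obtains a pair of partitions $X^r = X^r_1 \cup X^r_2$, $Y^{r'} = Y^{r'}_1 \cup Y^{r'}_2$ with parts of size $(1/2 \pm o(1))N$ and $(1/2 \pm o(1))N'$ such that almost all edges of $G$ lie in $X^r_1 \times Y^{r'}_1$ or $X^r_2 \times Y^{r'}_2$. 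This is parallel to the argument of \cite{zhao}, but adapted to the bipartite setting.

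For Step~2, I need to descend from this partition of $X^r\cup Y^{r'}$ to a vertex partition $V(H) = A \cup B$ with $|A| \approx n/2$ for which $H$ is $\eps$-close to $\mathcal{B}_{n,k}(A,B)$ or $\overline{\mathcal B}_{n,k}(A,B)$. The approach is direct on $G$: declare $v \sim v'$ if for all but $o(n^{r-1})$ many $(r-1)$-sets $T \subseteq V(H)\setminus\{v,v'\}$ the $r$-sets $T \cup \{v\}$ and $T \cup \{v'\}$ lie in the same part $X^r_i$. Using Proposition~\ref{evensum} to control how parities propagate from a tentative vertex partition to the induced partition of $\binom{V(H)}{r}$, $\sim$ should turn out to be an equivalence relation with exactly two classes of size $n/2 \pm o(n)$, yielding $A, B$. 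A consistency check between $X^r_1, X^r_2$ and $Y^{r'}_1, Y^{r'}_2$ (both must come from the same vertex partition) determines whether we end up $\eps$-close to $\mathcal B_{n,k}$ or to $\overline{\mathcal B}_{n,k}$; summing the defect over edges of $G$ then converts the bipartite-graph closeness into hypergraph closeness $|E(H) \triangle E(\mathcal B_{n,k}(A,B))| \le \eps n^k$ (or the complementary version). The main obstacle I expect is the dichotomy in Step~1 — quantitatively converting ``few absorbers for $W$'' over all $W$ into a single clean near-biclique partition of $G$ — together with verifying in Step~2 that the equivalence relation $\sim$ really has just two classes and that the $X^r$- and $Y^{r'}$-side partitions are induced by the same $A, B$.
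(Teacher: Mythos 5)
Your overall two-step architecture --- build the auxiliary bipartite graph $G$ on $X^r\cup Y^{r'}$, show that either $G$ is close to a union of two bicliques or every $k$-set has $\Omega(n^k)$ absorbers, then descend from a near-biclique partition of $G$ to a vertex partition of $V(H)$ via a similarity relation on vertices --- is exactly the paper's plan, and your absorber template and your $\sim$ relation are the same ideas the paper uses. The genuine gap is the dichotomy in Step~1, which you explicitly flag as the main obstacle but leave unresolved. In particular, the paper does \emph{not} invoke Lemma~\ref{lem:KK} there, and does not form an ``auxiliary hypergraph recording symmetric differences of $G$-neighborhoods'' on pairs of $r$-sets; that gadget would live on a ground set of size $N=\binom{n}{r}$, and Lemma~\ref{lem:KK} gives no clean partition of such a ground set. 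Instead the paper isolates two very specific structural alternatives, namely Case~(a) (every $r$-set $\u a$ has at least $(\frac12+\gamma)\binom{n}{r}$ other $r$-sets $\u b$ with $|N_H(\u a)\cap N_H(\u b)|\ge\gamma\binom{n}{r'}$) and Case~(b) (at least $2\gamma\binom{n}{r'}$ of the $r'$-sets have $d_H\ge(\frac12+\gamma)\binom{n}{r}$). Claim~\ref{clm:ab} extracts many absorbing $k$-sets or $2k$-sets from either Case, and Claim~\ref{clm:abG} shows by a direct cut argument (take $\u a$ witnessing the failure of (a), set $B':=N_G(\u a)$ and $A'':=\{\u x: |B'\cap N_G(\u x)|<\gamma N'\}$, then bound $e(A',B'')$) that failure of both forces $G=B_{n,k}\pm\beta NN'$. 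That explicit pair of properties, and the counting that makes the dichotomy precise, is the missing content in your Step~1.

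In Step~2 you are much closer, but two points you gloss over are exactly the points the paper has to work at. First, your claim that $\sim$ ``should turn out to be an equivalence relation'' is not free: requiring $|D(v,v')|=o(n^{r-1})$ is not obviously transitive. The paper sidesteps this entirely by choosing a single reference vertex $v_0$ similar and consistent with almost all other vertices, and defining $V_1,V_2$ relative to $v_0$; transitivity (in the form of bounds on $|D(v,v')|$ and $|C(v,v')|$ inside and across $V_1,V_2$) is then \emph{derived} via the triangle-type inequalities \eqref{eq:DDD}--\eqref{eq:DDD2}. Second, this is where Lemma~\ref{lem:KK} actually enters: it is applied to the $r$-uniform hypergraph $(V_1, X_1\cap\binom{V_1}{r})$ to force its density to be near $0$ or $1$, which together with an inductive argument and Proposition~\ref{evensum} pins down both the coloring of the $V_1^{r-i}V_2^i$-sets and the class sizes $|V_1|,|V_2|\approx n/2$. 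Your outline places this tool in Step~1 rather than Step~2 and never explains how it produces the partitions $X^r_1,X^r_2$; as written, both Step~1 and the transitivity issue in Step~2 remain open.
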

Theorem~\ref{thm:extabs} immediately follows from Lemmas~\ref{lem:abs}--\ref{lem:GH}.
The following lemma from~\cite{zhao} states that in order to find the absorbing set described in Theorem~\ref{thm:extabs}, it suffices to prove that there are at least $\xi n^{2k}$ absorbing $2k$-sets for every fixed $k$-set from $V(H)$.
\begin{lemma}\cite[Lemma 5.2]{zhao}
\label{lem:abs}
Given $0<\xi \ll 1$ and an integer $k \geq 2$,  there exists an $n_0 \in \mathbb N$ such that the following holds. Consider a $k$-uniform
hypergraph $H$ on $n \geq n_0$ vertices. Suppose that any $k$-set of vertices $Q \subseteq V(H)$ can be absorbed by at least
$\xi n^{2k}$ $2k$-sets of vertices from $V(H)$. Then $H$ contains a matching $M$ of size $|M|\leq \xi n/k$ that absorbs any set
$W \subseteq V(H) \backslash V(M)$ such that $|W|\in k \mathbb N$ and $|W|\leq \xi ^2 n$.
\end{lemma}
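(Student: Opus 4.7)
The plan is to use the absorbing technique of R\"odl, Ruci\'nski and Szemer\'edi: I would build $M$ out of a small random family $\F$ of pairwise disjoint absorbing $2k$-sets. Include each $S\in \binom{V(H)}{2k}$ in $\F$ independently with probability $p := c\, n^{1-2k}$, for a constant $c = c(k,\xi)$ to be fixed, so that $\ex[|\F|]$ is of order $cn$.

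The main probabilistic step is to show that with positive probability $\F$ satisfies simultaneously: (i) $|\F|\le \xi n/(2k)$; (ii) for every $k$-set $Q\subseteq V(H)$, the number of $S\in \F$ that absorb $Q$ is at least $2\xi^2 n/k$; (iii) the number of unordered intersecting pairs $\{S,S'\}\subseteq \F$ is at most $\xi^2 n/k$. Property (i) is a Chernoff bound, provided $c$ is of order $\xi$. For (ii), each $Q$ has at least $\xi n^{2k}$ absorbing $2k$-sets by hypothesis, so the expected count in $\F$ is at least $c\xi n$; a Chernoff bound for sums of independent Bernoullis, combined with a union bound over the $\binom{n}{k}\le n^k$ choices of $Q$, yields (ii) provided $c$ is large enough in terms of $\xi^2/k$. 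For (iii), the number of pairs of intersecting $2k$-sets in $\binom{V(H)}{2k}$ is $O(n^{4k-1})$, so the expected count in $\F$ is $O(p^2 n^{4k-1}) = O(c^2 n)$, which by Markov lies below $\xi^2 n/k$ when $c$ is not too large. The hypothesis $\xi \ll 1$ makes the three demands on $c$ compatible.

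Fix a realisation of $\F$ satisfying (i)--(iii) and delete one $2k$-set from each intersecting pair to obtain a subfamily $\F'$ of pairwise disjoint $2k$-sets. Then $|\F'|\le \xi n/(2k)$ and, by (ii) and (iii), every $k$-set $Q$ still has at least $2\xi^2 n/k - \xi^2 n/k = \xi^2 n/k$ absorbers in $\F'$. Each $S\in \F'$ is an absorbing set, so $H[S]$ contains a perfect matching consisting of two edges; let $M$ be the union of these matchings over $S\in\F'$. The edges are mutually disjoint, and $|M| \le 2|\F'| \le \xi n/k$.

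Finally I would check the absorbing property of $M$. Given $W\subseteq V(H)\setminus V(M)$ with $|W|\in k\mathbb N$ and $|W|\le \xi^2 n$, partition $W$ arbitrarily into $t := |W|/k \le \xi^2 n/k$ ordered $k$-sets $Q_1,\dots,Q_t$ and process them in sequence. At step $i$, pick any $S_i\in \F'$ that absorbs $Q_i$ and has not been used at earlier steps; this is possible because $Q_i$ has at least $\xi^2 n/k > i-1$ absorbers in $\F'$. Then replace the two edges of $M$ lying inside $S_i$ by a perfect matching of $H[S_i\cup Q_i]$, which exists by the definition of absorbing set. Since the $S_i$ are pairwise disjoint and each is disjoint from $W$, the replacements are compatible, so the resulting edge set is a perfect matching of $V(M)\cup W$. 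The main obstacle is the tuning of the constant $c$: (ii) pushes $c$ upward while (iii) pushes it downward, and it is the assumption $\xi \ll 1$ that makes these competing demands satisfiable with a union bound that beats $n^{-k}$ in the concentration step; everything else is routine first- and second-moment bookkeeping.
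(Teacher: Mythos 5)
Your proof is correct and is essentially the same first/second-moment random-subfamily argument of R\"odl, Ruci\'nski and Szemer\'edi that is used to prove Lemma~5.2 in~\cite{zhao} (which this paper cites rather than reproves). One very minor remark on your closing paragraph: the compatibility of the constraints on $c$ coming from (ii) and (iii) is not really driven by $\xi\ll1$ -- both constraints scale linearly in $\xi$, so $\xi$ cancels -- but rather by the large factorial slack in the $\binom{n}{2k}$-type bounds; the hypothesis $\xi\ll1$ is needed simply so that the assumption (\emph{at least $\xi n^{2k}$ absorbing $2k$-sets}) is not vacuous and so that the Chernoff deviations and the union bound over the $\binom{n}{k}$ sets $Q$ go through. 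Everything else, including the vertex-cover cleanup to pass from $\F$ to a pairwise-disjoint $\F'$ and the greedy absorption of $W$ in $k$-tuples, is correct and standard.
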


Throughout this section we will use the following notation.
Let $k \geq 3$ be an integer and set $r:=\lceil k/2 \rceil$ and $r':=k-r$.
Given a $k$-uniform hypergraph $H$, define $X^r:=\binom{V(H)}{r}$ and $Y^{r'}:=\binom{V(H)}{r'}$.
Set $N:=\binom{n}{r}$ and $N':=\binom{n}{r'}$.

Given a $k$-uniform hypergraph $H$, we define the bipartite graph $G(H)$ as follows:
$G(H)$ has vertex classes $X^r$ and $Y^{r'}$. Two vertices
$x_1\dots x_r \in X^r$ and $y_1\dots y_{r'} \in Y^{r'}$ are adjacent in $G(H)$ if and only if $x_1 \dots x_ry_1 \dots y_{r'} \in E(H)$.
When it is clear from the context, we will often refer to $G(H)$ as $G$.

Let $k \geq 3$ and $n$ be positive integers. 
 Denote by $B_{n,k}$ the bipartite graph with vertex classes $X$ and $Y$ of sizes $N$ and $N'$ respectively which
satisfies the following properties:
\begin{itemize}
\item $X_1,X_2$ is a partition of $X$ such that $|X_1|=\lceil N/2 \rceil$ and $|X_2|=\lfloor N/2 \rfloor$;
\item $Y_1,Y_2$ is a partition of $Y$ such that $|Y_1|=\lceil N'/2 \rceil$ and $|Y_2|=\lfloor N'/2 \rfloor$;
\item  $B_{n,k} [X_1,Y_1]$ and $B_{n,k} [X_2,Y_2]$ are complete bipartite graphs. Further, there are no other edges in $B_{n,k} $.
\end{itemize}

\begin{lemma}
\label{lem:G}
Given any $\b>0$ and an integer $k\geq 3$, there exist $\a , \xi >0$, and $n_0 \in \mathbb N$ such that the following holds.
Suppose that $H$ is a $k$-uniform hypergraph on $n \geq n_0$ vertices so that $$\d_{r}(H) \ge \left(\frac12 - \a \right)\binom{n-r}{k-r}.$$
Set $G:= G(H)$. Then at least one of the following assertions holds.
\begin{itemize}
\item $G = B_{n,k} \pm \b NN'$; in other words, $G$  becomes a copy of $B_{n,k}$ after adding or deleting at most $ \beta NN'$ edges.
\item There are at least $\xi n^{2k}$ absorbing $2k$-sets in $\binom{V(H)}{2k}$ for every $k$-subset of $V(H)$.
\end{itemize}
\end{lemma}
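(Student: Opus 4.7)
The plan is to exhibit, for any fixed $k$-set $Q\subseteq V(H)$, at least $\xi n^{2k}$ absorbing $2k$-sets and then invoke Lemma~\ref{lem:abs}; the dichotomy will come from a counting identity together with a standard ``near-complementary-neighborhoods'' analysis in $G$. Fix a partition $Q=Q_1\cup Q_2$ with $|Q_1|=r$ and $|Q_2|=r'$, so that $Q_1\in X^r$ and $Q_2\in Y^{r'}$. Call a pairwise disjoint quadruple $(\u{x}_1,\u{y}_1,\u{x}_2,\u{y}_2)$ with $\u{x}_i\in X^r$, $\u{y}_i\in Y^{r'}$, and with union also disjoint from $Q$, a \emph{$Q$-absorber} if $\u{x}_1\u{y}_1$, $\u{x}_2\u{y}_2$, $Q_1\u{y}_1$, and $\u{x}_1Q_2$ are all edges of $G$. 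Then $S:=\u{x}_1\cup\u{y}_1\cup\u{x}_2\cup\u{y}_2$ is absorbed by $Q$, because $\{\u{x}_1\cup\u{y}_1,\u{x}_2\cup\u{y}_2\}$ is a perfect matching on $S$ and $\{Q_1\cup\u{y}_1,\u{x}_1\cup Q_2,\u{x}_2\cup\u{y}_2\}$ is a perfect matching on $S\cup Q$. Proposition~\ref{prop:deg} transfers the hypothesis to $\d_{r'}(H)\ge(\tfrac12-\a)\binom{n-r'}{k-r'}$, so every vertex of $G$ in $X^r$ has degree at least $(\tfrac12-\a)N'-O(n^{r'-1})$ and every vertex in $Y^{r'}$ has degree at least $(\tfrac12-\a)N-O(n^{r-1})$.

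Up to $O(n^{2k-1})$ disjointness corrections, the number of $Q$-absorbers equals
\[
\Bigl(\sum_{\u{x}_1\in N_G(Q_2)}|N_G(Q_1)\cap N_G(\u{x}_1)|\Bigr)\cdot e(G).
\]
Since $e(G)=\Omega(NN')$, the count exceeds $\xi n^{2k}$ as soon as the parenthesised sum is $\Omega(NN')$. Suppose instead that this sum is $o(NN')$ for some $Q$. Then for almost every $\u{x}_1\in N_G(Q_2)$ the intersection $|N_G(Q_1)\cap N_G(\u{x}_1)|$ is $o(N')$, and combined with $|N_G(Q_1)|,|N_G(\u{x}_1)|\ge(\tfrac12-\a)N'$ this forces $N_G(\u{x}_1)=Y^{r'}\setminus N_G(Q_1)\pm o(N')$ for such $\u{x}_1$. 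Setting $Y_1:=N_G(Q_1)$, $Y_2:=Y^{r'}\setminus Y_1$, letting $X_2$ be the collection of $\u{x}\in X^r$ whose neighborhood in $G$ is close to $Y_2$, and $X_1$ its complement, the minimum-degree condition pins down $|X_i|\approx N/2$ and $|Y_i|\approx N'/2$; comparing $E(G)$ with $K_{X_1,Y_1}\cup K_{X_2,Y_2}$ then yields $G=B_{n,k}\pm\b NN'$.

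The step I expect to be the main obstacle is promoting the local near-complementarity observed at a single $Q$ into one globally consistent bipartition of $X^r$ (and symmetrically of $Y^{r'}$) with controlled error $\b$. To enforce this I would apply Lemma~\ref{lem:KK} to the $r$-uniform hypergraph on $V(H)$ whose edges are the ``type-$2$'' $r$-sets defined above: unless this hypergraph has density within $\a$ of $0$ or $1$, the lemma provides pairs $\u{u},\u{u}'\in X^r$ with $|D(\u{u},\u{u}')|=\Omega(n^{r-1})$, and each such pair supplies $\Omega(NN')$ additional swap configurations, producing absorbers in excess of $\xi n^{2k}$ and contradicting our assumption. Hence only a near-extremal density survives and the two-cluster structure is genuine rather than illusory. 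The rest of the argument---degree bookkeeping, passing from ordered tuples to unordered $2k$-sets, absorbing the disjointness loss into the $O(n^{2k-1})$ term, and choosing the hierarchy $\a\ll\xi\ll\b$---is routine once this dichotomy is set up.
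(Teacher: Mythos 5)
Your absorber construction is sound and is in fact a streamlined version of the paper's ``absorbing $k$-set plus an arbitrary edge'' device: the quadruple $(\u{x}_1,\u{y}_1,\u{x}_2,\u{y}_2)$ with the four listed $G$-edges does absorb $Q$, and the ordered count is indeed $\bigl(\sum_{\u{x}_1\in N_G(Q_2)}|N_G(Q_1)\cap N_G(\u{x}_1)|\bigr)\cdot e(G)$ up to disjointness corrections, which is $\Omega(n^{2k})$ precisely when the bracketed sum is $\Omega(NN')$. But the dichotomy analysis has a genuine gap, and you flagged it yourself: when the sum is $o(NN')$ for some fixed $Q=Q_1\cup Q_2$, you only learn that $N_G(\u{x})$ is near-complementary to $N_G(Q_1)$ for most $\u{x}\in N_G(Q_2)$, i.e., for about half of $X^r$. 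The minimum-degree hypothesis alone does not force the remaining $\approx N/2$ vertices of $X^r$ to have neighborhoods close to $Y_1$; they could all have neighborhoods roughly evenly split between $Y_1$ and $Y_2$, and then $G$ is nothing like $B_{n,k}$ yet no contradiction has been reached. The paper escapes this by using a \emph{second} independent condition (its Case (b): few $r'$-sets have degree substantially above $N/2$), whose negation is what controls $e(A',B'')$ in the key step (Claim~\ref{clm:AcB}) and thereby pins down the second cluster. Your proposal has no analogue of this second condition.

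Your suggested patch via Lemma~\ref{lem:KK} does not close the gap. First, Lemma~\ref{lem:KK} produces pairs of \emph{vertices} $v,v'\in V(H)$, not pairs of $r$-sets in $X^r$, so the claim that it ``provides pairs $\u{u},\u{u}'\in X^r$'' misreads the lemma. Second, even interpreted correctly (pairs $u,u'\in V(H)$ with many $(r+1)$-sets $S$ for which exactly one of $S-u$, $S-u'$ is of type $2$), it is unclear how such ``swap configurations'' produce absorbing $2k$-sets for $Q$ — there is no mechanism linking a color-flip in the auxiliary $r$-graph to a perfect matching of $H[S\cup Q]$. In the paper, Lemma~\ref{lem:KK} is reserved for Lemma~\ref{lem:GH}, after the near-bipartite structure of $G$ is already in hand, where it is used to show that the coloring of $\binom{V(H)}{r}$ comes from a vertex bipartition; it is not part of the proof of Lemma~\ref{lem:G} at all. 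To repair the proposal you would need to introduce the second case explicitly (many $r'$-sets of high degree also yield many absorbers, via the paper's $2k$-set construction with the extra ``pivot'' edge $\u{w}'\u{z}'$), and then run the two negations together.
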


\begin{lemma}
\label{lem:GH}
Given any $\eps >0$ and integer $k \geq 3$, there exist $\b>0$ and $n_0 \in \mathbb N$ such that the following holds.
Suppose that $H$ is a $k$-uniform hypergraph on $n \geq n_0$ vertices. Suppose further that $G:= G(H)$ satisfies $G = B_{n,k} \pm \b NN'$. Then $H$ is $\eps$-close to $\mathcal B_{n,k}$ or $\overline{\mathcal B}_{n,k}$.
\end{lemma}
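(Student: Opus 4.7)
My plan is to extract a vertex partition $V(H) = A \cup B$ from the partitions $(X_1, X_2)$ of $X^r$ and $(Y_1, Y_2)$ of $Y^{r'}$ that witness $G = B_{n,k} \pm \b NN'$, and then show that $H$ is $\eps$-close to $\mathcal{B}_{n,k}(A, B)$ or $\overline{\mathcal{B}}_{n,k}(A, B)$ with respect to this partition.

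Choose auxiliary constants $\b \ll \g \ll \eps$. By averaging over the at most $\b NN'$ edges of $E(G) \triangle E(B_{n,k})$, all but at most $\g N$ of the $r$-sets $\u{x} \in X^r$ are \emph{good}, in the sense that $|N_G(\u{x}) \triangle Y_{i(\u{x})}| \leq \g N'$, where $i(\u{x}) \in \{1, 2\}$ denotes the $X$-class of $\u{x}$; the analogous statement holds for $Y^{r'}$. Consequently, for any two good $r$-sets $\u{x}_1, \u{x}_2$ we have $|N_G(\u{x}_1) \triangle N_G(\u{x}_2)| \leq 2\g N'$ if $\u{x}_1, \u{x}_2$ lie in the same $X$-class and $|N_G(\u{x}_1) \triangle N_G(\u{x}_2)| \geq (1 - 2\g) N'$ otherwise, so the class-relation of two good $r$-sets is detectable from their neighbourhoods in $G$. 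Proposition~\ref{evensum} will be used to compute the sizes of the parity classes of $r$- and $r'$-subsets with respect to a candidate partition $(A, B)$.

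For each pair of distinct vertices $u, v \in V(H)$, define
\[
\tau(u, v) := \bigl|\{ S \in \tbinom{V(H) \setminus \{u, v\}}{r-1} : \text{$S \cup \{u\}$ and $S \cup \{v\}$ lie in different $X$-classes}\}\bigr|,
\]
which equals $|D_F(u, v)|$ for the $r$-uniform hypergraph $F = (V(H), X_1)$ from Lemma~\ref{lem:KK}. I then fix a typical $v_0 \in V(H)$ (one with almost all containing $r$-sets good), put $v_0 \in A$, and declare $v \in A$ iff $\tau(v_0, v) \leq \tfrac{1}{2}\binom{n-2}{r-1}$, with $B := V(H) \setminus A$. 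To justify this, I will show that apart from a $\sqrt{\g}$-fraction of pairs $\{u, v\}$, the quantity $\tau(u, v)$ is either at most $\sqrt{\g}\binom{n-2}{r-1}$ or at least $(1 - \sqrt{\g})\binom{n-2}{r-1}$. The key combinatorial input is the pointwise identity
\[
[a = b] + [b = c] + [a = c] \equiv 1 \pmod{2} \qquad (a, b, c \in \{1, 2\}),
\]
applied to the $X$-classes of the three $r$-sets $T \cup \{u, v\}$, $T \cup \{u, w\}$, $T \cup \{v, w\}$ for any $(r - 2)$-set $T$ disjoint from $\{u, v, w\}$. Summing over all such $T$ yields a $\mathbb{Z}/2$-additivity relation among the ``different-class'' counts which, together with the good/bad dichotomy, forces the majority verdict $[\tau(u, v) > \tfrac{1}{2}\binom{n-2}{r-1}]$ to be XOR-consistent on most triples $\{u, v, w\}$. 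This will give that $(A, B)$ is essentially canonical and that $(X_1, X_2)$ is $O(\sqrt{\g})$-close, as a partition of $X^r$, to the parity partition into $r$-sets with even and odd intersection with $A$, under some labelling. Lemma~\ref{lem:KK} will ensure $A, B \neq \emptyset$: since $|X_1| \sim N/2$, the density-$1/2$ hypothesis produces $\sum_{\{u, v\}} \tau(u, v) \geq \tfrac{1}{4}\binom{n}{r+1}$, so some pairs $\{u, v\}$ must land at the upper extreme. A parallel argument in $Y^{r'}$ using $(r' - 1)$-subsets will give the corresponding conclusion for $(Y_1, Y_2)$, and the induced vertex partition must coincide with $(A, B)$ up to a swap for the bipartite structure of $G$ to hold.

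To conclude, I will decompose each $k$-set $e \in \binom{V(H)}{k}$ into its $\binom{k}{r}$ disjoint-pair representations $(\u{x}, \u{y})$ with $\u{x} \cup \u{y} = e$. For all but an $O(\sqrt{\g})$-fraction of $k$-sets $e$, every such decomposition has $\u{x}$ and $\u{y}$ in parity-correct $X$- and $Y$-classes, so the bound $|E(G) \triangle E(B_{n,k})| \leq \b NN'$ translates, after dividing by $\binom{k}{r}$, to $|E(H) \triangle E(\mathcal{B}_{n,k}(A, B))| \leq \eps n^k$ or $|E(H) \triangle E(\overline{\mathcal{B}}_{n,k}(A, B))| \leq \eps n^k$, depending on the parity labelling. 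The hardest part will be the XOR-consistency step: while the pointwise identity is elementary, transferring it into a statement about averages, and using only the bipartite-level hypothesis on $G$ to rule out ``mixed'' pairs $\{u, v\}$ whose $\tau(u, v)$ is bounded away from both extremes, will require careful bookkeeping that marries the good/bad dichotomy with the triple identity.
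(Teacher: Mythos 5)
There is a genuine gap at the step you yourself flag as ``the hardest part.'' The triple identity $[a=b]+[b=c]+[a=c]\equiv 1\pmod 2$ is a tautology: it holds for \emph{any} $2$-colouring of $X^r$, including a uniformly random one for which $\tau(u,v)\approx \tfrac12\binom{n-2}{r-1}$ for all pairs. Summing it over $(r-2)$-sets $T$ gives, for each $(r+1)$-set $R\supseteq\{u,v,w\}$, that the number of pairs $\{p,q\}\subset\{u,v,w\}$ with $R\in D_F(p,q)$ is even; this is a parity constraint on \emph{restricted} counts (those $(r+1)$-sets containing all three of $u,v,w$), not on the full quantities $\tau(u,v)$, and passing to averages destroys parity information. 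So the identity cannot, on its own, rule out pairs with $\tau(u,v)$ bounded away from both extremes, nor force XOR-consistency of the majority verdict. The ``good/bad'' dichotomy you introduce (each good $\u{x}\in X^r$ has $N_G(\u{x})$ close to one of $Y_1,Y_2$) tells you that the $X$-class is detectable from neighbourhoods, but it likewise does not say the $X$-partition is close to a parity partition.

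What actually forces two-valuedness is the coupling between the $X$-partition and the $Y$-partition via the bound on $|E(G)\triangle E(B_{n,k})|$ at the level of $k$-sets. In the paper's proof one calls a $k$-set $S$ bad when two decompositions $S=P\cup P'=Q\cup Q'$ with $P,Q\in X^r$, $P',Q'\in Y^{r'}$ give contradictory verdicts against $E(B_{n,k})$; there are at most $\b NN'$ bad $k$-sets (Claim~\ref{badclaim}). Then for a pair $u,v$ with both $|C(u,v)|$ and $|D(u,v)|$ large (your ``mixed'' pairs), one first shows (Claim~\ref{clm:X=Y}) that $|C'(u,v)|$ is also large, and then observes that $P\cup Q$ is a bad $k$-set for every $P\in D(u,v)$, $Q\in C'(u,v)$ with $P\cap Q=\{u,v\}$; this yields far more than $\b NN'$ bad $k$-sets, a contradiction (Claim~\ref{clm:sim}). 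Your proposal never glues the $X$- and $Y$-partitions in this way; the parallel $Y$-side argument you mention only reappears at the end to match the two vertex partitions, which is too late. Without a cross-coupling mechanism of this kind, the passage from ``$G$ close to $B_{n,k}$'' to ``$\tau$ is essentially two-valued'' does not go through, and the rest of the plan (the derivation of $A,B$, Lemma~\ref{lem:KK} for nonemptiness, Proposition~\ref{evensum} for the sizes, and the final decomposition of $k$-sets) is built on an unestablished premise.
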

The rest of the section is devoted to the proof of Lemmas~\ref{lem:G} and \ref{lem:GH}.

\subsection{Proof of Lemma~\ref{lem:G}}
Given $\b> 0$, we choose additional constants $\g, \a, \xi$ such that
\begin{equation}
\label{eq:agxi}
0< \xi \ll \a \ll \g \ll \beta.
\end{equation}
Without loss of generality we may assume that $\b \ll 1/k$.
We also assume that $n$ is sufficiently large. 

We have that 
\begin{align}\label{minr}
\delta _{r} (H) \geq \left( 1/2 -\alpha \right )\binom{n-r}{k-r} \geq \left( 1/2 -2 \alpha \right )\binom{n}{r'}
\end{align}
and so by Proposition~\ref{prop:deg},
\begin{align}\label{minr'}
\delta _{r'} (H) \geq \left( 1/2 -\alpha \right )\binom{n-r'}{k-r'} \geq \left( 1/2 -2 \alpha \right )\binom{n}{r}.
\end{align}

Let $Q\subseteq V(H)$ be a $k$-set.
It is easy to see that if $Q$ has at least $\g^3 n^{k}$ absorbing $k$-sets then $Q$ has at least $\xi n^{2k}$ absorbing $2k$-sets. Indeed, let $P$ be an absorbing $k$-set for $Q$. Then $P\cup e$ is an absorbing $2k$-set for $Q$ for any edge $e\in E(H - (P\cup Q))$. Note that
$$|E(H)| \ge \left(\frac12 - \a \right)\binom{n-r}{k-r}\times \frac{\binom{n}{r}}{\binom{k}{r}} = \left( \frac{1}{2}- \a \right) \binom{n}{k}.$$
Thus, as $n$ is sufficiently large, there are at least
\[
\left(\frac12 - \a \right)\binom{n}{k} - 2k\binom{n}{k-1} \ge \frac{n^{k}}{4( k!)}
\]
edges in $H- (P\cup Q)$. Since an absorbing $2k$-set may be counted at most $\binom{2k}{k}$ times when counting the number of $P, e$, there are at least
\[
\g^3 n^{k} \times  \frac{n^{k}}{4(k!)} \times\frac{1}{\binom{2k}{k}} \stackrel{(\ref{eq:agxi})}{\ge}  \xi n^{2k}
\]
absorbing $2k$-sets for $Q$.

Therefore, in order to prove Lemma~\ref{lem:G}, it suffices to prove the following two claims.

\begin{claim}\label{clm:ab}
If either of the following cases holds, then we can find $\g^3 n^{k}$ absorbing $k$-sets or $\g^3 n^{2k}$ absorbing $2k$-sets for every
$k$-set $Q\in \binom{V(H)}{k}$.
\begin{description}
\item[Case (a) ] For any $r$-tuple $\underline{a} \in \binom{V(H)}{r}$, there are at least $(\frac{1}{2} + \g) \binom{n}{r}$
$r$-tuples $\underline{b} \in \binom{V(H)}{r}$ such that $| N_H( \underline{a} ) \cap N _H (\underline{b}) | \ge \g \binom{n}{r'}$.
\item[Case (b)] $| \{ \underline{a} \in \binom{V(H)}{r'}: d_H(\underline{a})\ge (\frac12 + \g)\binom{n}{r} \} |\ge 2\g \binom{n}{r'}$.
\end{description}
\end{claim}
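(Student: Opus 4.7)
The plan is to construct the required absorbing sets by direct counting in each case, exploiting a ``swap'' perfect matching in the union with $Q$. Fix an arbitrary $k$-set $Q\subseteq V(H)$ and write $Q=Q_1\cup Q_2$ with $|Q_1|=r$ and $|Q_2|=r'$.

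For Case (a), I aim to build absorbing $k$-sets of the form $P=B\cup B'$ with $|B|=r$ and $|B'|=r'$, disjoint from $Q$. The two required perfect matchings are the single edge $P$ covering $P$ itself, and $\{B\cup Q_2,\,Q_1\cup B'\}$ covering $P\cup Q$. This amounts to the three edge conditions $B\cup B'\in E(H)$, $B\cup Q_2\in E(H)$, and $Q_1\cup B'\in E(H)$. Case~(a) applied with $\underline{a}=Q_1$ produces at least $(1/2+\gamma)\binom{n}{r}$ $r$-sets $B$ with $|N(Q_1)\cap N(B)|\ge\gamma\binom{n}{r'}$. Intersecting this family with $N(Q_2)$ (of size at least $(1/2-2\alpha)\binom{n}{r}$ by~\eqref{minr'}) and discarding the $O(n^{r-1})$ $r$-sets meeting $Q$ gives at least $(\gamma/2)\binom{n}{r}$ valid choices of $B$. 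For each such $B$, at least $(\gamma/2)\binom{n}{r'}$ $r'$-sets $B'\in N(Q_1)\cap N(B)$ avoid $B\cup Q$. Dividing by the constant overcount $\binom{k}{r}$ (the number of ordered splits of $P$ into $B\cup B'$) yields $\Omega(\gamma^2 n^k)\ge\gamma^3 n^k$ absorbing $k$-sets, using $\gamma$ sufficiently small in the hierarchy~\eqref{eq:agxi}.

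For Case (b), I aim to build absorbing $2k$-sets of the form $R=P_1\cup P_2=(U_1\cup V_1)\cup(V_3\cup U_3)$, with $|U_1|=|V_3|=r$ and $|V_1|=|U_3|=r'$, disjoint from $Q$. The two required perfect matchings are $\{P_1,P_2\}$ of $R$ and $\{Q_2\cup U_1,\,V_1\cup V_3,\,U_3\cup Q_1\}$ of $R\cup Q$, together imposing the five conditions $U_1\in N(V_1)$, $V_3\in N(U_3)$, $U_1\in N(Q_2)$, $V_3\in N(V_1)$, and $U_3\in N(Q_1)$. I select in order: (i)~$V_1$ among the $2\gamma\binom{n}{r'}$ heavy $r'$-sets, (ii)~$U_1\in N(V_1)\cap N(Q_2)$, (iii)~$U_3\in N(Q_1)$, and (iv)~$V_3\in N(V_1)\cap N(U_3)$, each disjoint from earlier selections and from $Q$. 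Heaviness of $V_1$ gives $|N(V_1)|\ge(1/2+\gamma)\binom{n}{r}$, so inclusion-exclusion with~\eqref{minr'} provides $|N(V_1)\cap N(Q_2)|\ge(\gamma/2)\binom{n}{r}$ and $|N(V_1)\cap N(U_3)|\ge(\gamma/2)\binom{n}{r}$. Multiplying the four bounds and dividing by a constant overcount yields of order $\gamma^3 n^{2k}$ absorbing $2k$-sets, which meets the $\gamma^3 n^{2k}$ threshold after the $k$-dependent constant is absorbed by the hierarchy.

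The crux is Case~(b): the heavy-$r'$-set hypothesis conveys no direct codegree information, so a naive $k$-absorber $B\cup B'$ as in Case~(a) cannot be produced---the required intersection $N(B)\cap N(B')$ of two $(1/2-O(\alpha))$-sized neighborhoods could be empty. Introducing the ``bridge'' edge $V_1\cup V_3$ between the two halves of $R$, with $V_1$ heavy, precisely converts the heaviness into the codegree lower bounds $|N(V_1)\cap N(\cdot)|\ge\gamma/2$ that power the count.
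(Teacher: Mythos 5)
Your Case~(a) argument is essentially the paper's, with the cleaner labeling $Q=Q_1\cup Q_2$ replacing the paper's $\underline{x},\underline{y}$; the absorber $B\cup B'$ corresponds to the paper's $\underline{x}'\cup\underline{y}'$. There is nothing to quarrel with there.

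Your Case~(b) has a genuine gap in the final count. The absorber template is the same as the paper's --- your $(U_1,V_1,U_3,V_3)$ is exactly $(\underline{x}',\underline{w}',\underline{y}',\underline{z}')$ with the same five edge conditions --- but you select in the order $V_1$ (from $\Lambda$), then $U_1\in N(V_1)\cap N(Q_2)$, then $U_3\in N(Q_1)$, then $V_3\in N(V_1)\cap N(U_3)$. Three of the four bounds carry a factor of $\gamma$: roughly $2\gamma\binom{n}{r'}$, $(\gamma/2)\binom{n}{r}$, $\Omega(\binom{n}{r'})$, and $(\gamma/2)\binom{n}{r}$. After multiplying and dividing by the overcount $\binom{2k}{r}\binom{2k-r}{r'}\binom{k}{r}$ and by the $(r!)^2(r'!)^2$ hidden in $\binom{n}{r}^2\binom{n}{r'}^2\approx n^{2k}/((r!)^2(r'!)^2)$, you obtain a count of the form $c_k\gamma^3n^{2k}$ with $c_k<1$. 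This is strictly less than the claimed $\gamma^3n^{2k}$, and the hierarchy~\eqref{eq:agxi} cannot repair it: $\gamma\ll 1/k$ only makes $\gamma$ smaller, which is useful when the coefficient is $\Theta(\gamma^2)$ (a spare $\gamma$ can eat the constant) but useless when the coefficient is already $\Theta(\gamma^3)$. Your parenthetical ``meets the $\gamma^3 n^{2k}$ threshold after the $k$-dependent constant is absorbed by the hierarchy'' is therefore incorrect.

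The paper obtains the extra factor of $\gamma$ by a preliminary case split inside Case~(b). It first checks whether $|\Lambda\cap N_H(\underline{y})|\ge(\gamma/2)\binom{n}{r'}$ (which gives~\eqref{eq:good2} and hence $k$-absorbers) or whether~\eqref{eq:good} holds (again $k$-absorbers). Only when both fail does it run the $2k$-absorber construction, and this failure yields structural information: there are $\ge(\tfrac12-\gamma)\binom{n}{r}$ choices of $\underline{x}'\in N(\underline{x})$ satisfying $|N(\underline{x}')\cap N(\underline{y})|<(\gamma/2)\binom{n}{r'}$, so $N(\underline{x}')\cup N(\underline{y})$ is almost all of $\binom{V}{r'}$, and combined with $|\Lambda\cap N(\underline{y})|<(\gamma/2)\binom{n}{r'}$ this forces $|\Lambda\cap N(\underline{x}')|\ge(\gamma/2)\binom{n}{r'}$, which is~\eqref{eq;x'y'}. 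Selecting $\underline{x}'$ first thus costs a $(\tfrac12-\gamma)$ factor rather than a $\gamma$ factor, and the final coefficient is $\Theta(\gamma^2)$, leaving room for $\gamma\ll 1/k$ to absorb all the $k$-dependent constants. Your more direct construction is genuinely simpler, and it does produce $\xi n^{2k}$ absorbers (which is all the surrounding argument actually uses), but as written it does not prove the claim with the stated $\gamma^3$ bound; you need either the paper's case split or a weakened conclusion such as $\gamma^4 n^{2k}$.
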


\begin{claim}\label{clm:abG}
If neither Case (a) or Case (b) holds, then $G = B_{n,k} \pm \b NN'$.
\end{claim}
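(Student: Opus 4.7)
My plan is to use the failure of Case (a) to extract a canonical bipartition of each vertex class of $G$, and then to use the failure of Case (b) together with the minimum $r$- and $r'$-degree bounds \eqref{minr} and \eqref{minr'} to control the edge count in each of the four resulting blocks. I work throughout with the hierarchy $0 < \alpha \ll \gamma \ll \beta$ from \eqref{eq:agxi}, noting in particular that every vertex of $G$ has $G$-degree at least $(1/2 - 2\alpha)$ times the size of the opposite class.

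First I fix an $\underline{a}^* \in X^r$ witnessing the failure of Case (a), and set $Y_1 := N_G(\underline{a}^*)$, $Y_2 := Y^{r'} \setminus Y_1$, $X_1 := \{\underline{b} \in X^r : |N_G(\underline{b}) \cap Y_1| \geq \gamma N'\}$, and $X_2 := X^r \setminus X_1$. The minimum $r$-degree yields $|Y_1| \geq (1/2 - 2\alpha) N'$. For the matching upper bound, if $d_G(\underline{a}^*) > (1/2 + 2\alpha + \gamma) N'$ then for every $\underline{b} \in X^r$
\[
|N_G(\underline{a}^*) \cap N_G(\underline{b})| \geq d_G(\underline{a}^*) + d_G(\underline{b}) - N' > \gamma N',
\]
contradicting the choice of $\underline{a}^*$; hence $|Y_1|, |Y_2| \in (1/2 \pm O(\gamma)) N'$. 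Similarly $|X_1| \leq (1/2 + \gamma) N$ is immediate from the definition of $X_1$, and the double count
\[
(1/4 - O(\alpha))NN' \leq \sum_{\underline{y} \in Y_1} d_G(\underline{y}) = e(X, Y_1) \leq |X_1||Y_1| + |X_2| \gamma N'
\]
forces $|X_1| \geq (1/2 - O(\gamma)) N$.

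The heart of the argument is to estimate the four edge counts $e(X_i, Y_j)$ against their $B_{n,k}$-targets. The definition of $X_2$ gives $e(X_2, Y_1) \leq |X_2| \gamma N' \leq \gamma NN'$, handling the extra edges in the $(X_2, Y_1)$ block. For $\underline{b} \in X_2$ we have $|N_G(\underline{b}) \cap Y_2| \geq d_G(\underline{b}) - \gamma N' \geq (1/2 - O(\gamma)) N'$; combined with $|Y_2| \leq (1/2 + O(\gamma)) N'$, this forces the number of non-edges in $(X_2, Y_2)$ to be at most $|X_2| \cdot O(\gamma) N' = O(\gamma) NN'$. For the $(X_1, Y_2)$ block I invoke the failure of Case (b): at most $2\gamma N'$ vertices of $Y^{r'}$ have $G$-degree exceeding $(1/2 + \gamma) N$, so
\[
e(X, Y_2) \leq |Y_2|(1/2 + \gamma) N + 2\gamma N' \cdot N \leq (1/4 + O(\gamma)) NN',
\]
and subtracting $e(X_2, Y_2) \geq |X_2|(1/2 - O(\gamma))N' \geq (1/4 - O(\gamma)) NN'$ yields $e(X_1, Y_2) \leq O(\gamma) NN'$. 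An analogous calculation, using the lower bound $e(X, Y_1) \geq (1/4 - O(\alpha)) NN'$ and $e(X_2, Y_1) \leq \gamma NN'$, gives $|X_1||Y_1| - e(X_1, Y_1) \leq O(\gamma) NN'$.

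Summing the four contributions shows $G$ differs from $K_{X_1, Y_1} \cup K_{X_2, Y_2}$ in at most $O(\gamma) NN'$ edges. Since $||X_1| - \lceil N/2 \rceil|$ and $||Y_1| - \lceil N'/2 \rceil|$ are both $O(\gamma) \max(N, N')$, shifting that many vertices across each bipartition to match the exact part sizes in the definition of $B_{n,k}$ alters at most $O(\gamma) NN'$ further edges, so choosing $\gamma$ small enough in terms of $\beta$ (which \eqref{eq:agxi} permits) gives $G = B_{n,k} \pm \beta NN'$. I expect the most delicate point to be the lower bound $|X_1| \geq (1/2 - O(\gamma)) N$: the membership threshold $\gamma N'$ defining $X_1$ is rather weak, and it is only because $|Y_1|$ is essentially $N'/2$ (controlled via failure of Case (a) as above) that the double-count of $e(X, Y_1)$ is strong enough to force $|X_1|$ that close to $N/2$, which in turn is what keeps the total edge discrepancy inside the $\beta NN'$ budget.
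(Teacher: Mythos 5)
Your proposal is correct, and the overall structure mirrors the paper's: fix $\underline{a}^*$ witnessing the failure of Case (a), let $Y_1:=N_G(\underline{a}^*)$, split $X^r$ according to density into $Y_1$ (your $X_1,X_2$ are exactly the paper's $A',A''$), bound all four blocks, and rebalance to sizes $\lceil N/2\rceil, \lceil N'/2\rceil$. The one place where you take a genuinely different route is the sparse block $(X_1,Y_2)$. The paper's Claim~\ref{clm:AcB} establishes $e(A',B'')\le 8\sqrt{\gamma}\,|A'||B''|$ by contradiction: if this block were dense, a positive fraction of $B''$ would have high $G$-degree to $A'$; combined with the near-completeness of $(A'',B'')$ this produces more than $2\gamma N'$ vertices of $Y^{r'}$ with degree at least $(\tfrac12+\gamma)N$, contradicting the negation of Case (b). You instead use the negation of Case (b) directly to cap the total $e(X,Y_2)$ at $(\tfrac14+O(\gamma))NN'$ and then subtract the easy lower bound $e(X_2,Y_2)\ge(\tfrac14-O(\gamma))NN'$, yielding $e(X_1,Y_2)=O(\gamma)NN'$. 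Your variant is shorter, avoids the two-stage degree dichotomy and the $\sqrt{\gamma}$ losses, and in fact gives a sharper bound; both are comfortably inside the $\beta NN'$ budget given $\gamma\ll\beta$. One small point to be careful about (which you implicitly handle via the chain $|X_1|\le(\tfrac12+\gamma)N$ and the $e(X,Y_1)$ double count): you need both the upper and lower bounds on $|X_1|,|X_2|$ before the subtraction $e(X,Y_2)-e(X_2,Y_2)$ actually pins down $e(X_1,Y_2)$, since the lower bound on $e(X_2,Y_2)$ uses $|X_2|\ge(\tfrac12-\gamma)N$.
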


\noindent
{\bf Proof of Claim~\ref{clm:ab}.} We argue in a similar way to the proof of Claim 5.5 in~\cite{zhao}.
Given a $k$-set $Q= \{x_1, \dots , x_{r'} , y_1 , \dots , y_r \} \subseteq V(H)$, we will consider two types of absorbing sets for $Q$:
\begin{description}
\item[Absorbing $k$-sets] These consist of a single edge $x'_1\dots x'_r y'_1 \dots y'_{r'} \in E(H)$ with the property that
both $x_1 \dots x_{r'} x'_1 \dots x'_r$ and $y_1 \dots y_r y'_1 \dots y'_{r'}$ are edges of $H$.

\item[Absorbing $2k$-sets] These consist of distinct vertices $x'_1, \dots, x'_{r}$, $y'_1 , \dots , y'_{r'}$, $w'_1, \dots , w'_{r'} $, \newline $z'_1 , \dots , z'_r \in V(H)$ such that $x'_1 \dots x'_r w'_1 \dots w'_{r'}\,$, $y'_1 \dots y'_{r'} z'_1 \dots  z'_r$ and $ w'_1 \dots w'_{r'} z'_1 \dots  z'_r$ are edges in $H$. Furthermore, $x_1 \dots x_{r'} x'_1 \dots x'_r$ and $y_1 \dots y_r y'_1 \dots y'_{r'}$ are also edges of $H$ (see Figure~1).
\end{description}
\begin{figure}\label{picture}
\begin{center}\footnotesize
\includegraphics[width=1\columnwidth]{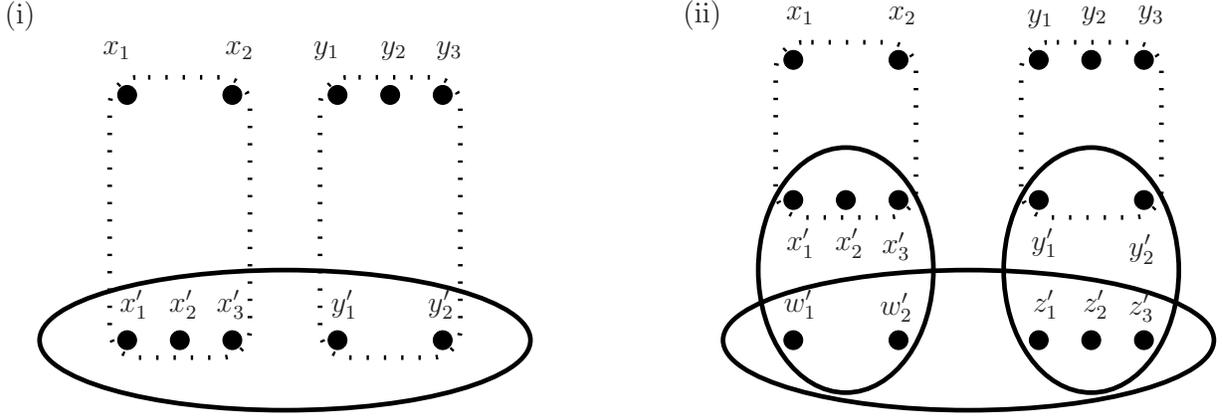}
\caption{The (i) absorbing $k$-set and (ii) absorbing $2k$-set in the case when $k=5$.}
\end{center}
\end{figure}
Write $\underline{x}:=x_1 \dots x_{r'}$ and $\underline{y}:=y_1 \dots y_r$. For any two (not necessarily disjoint) $r$-tuples $\underline{a}$, $\underline{b} \in \binom{V(H)}{r}$ we call $\underline{a}$
a \emph{good $r$-tuple for $\underline{b}$} if $|N_H (\underline{a}) \cap N_H (\underline{b})|\geq \g \binom{n}{r'}/2$.
 We first observe that $Q$ has at least $ \g^3 n^{k}$ absorbing $k$-sets if there are
\begin{align}\label{eq:good}
  \text{at least $\frac{\g}{2} \binom{n}{r}$ good $r$-tuples in $N_H(\underline{x})$ for $\underline{y}$}.
\end{align}
Indeed, assume that (\ref{eq:good}) holds. 
There are at most $r\binom{n}{r-1}$ $r$-tuples in $\binom{V(H)}{r}$ that contain at least one  element from $\{y_1, \dots , y_r \}$.
Therefore, there are at least $\g \binom{n}{r}/2 - r\binom{n}{r-1}$ $r$-tuples in $N_H(\underline{x})$ that are good for $\underline{y}$ and disjoint from $\underline{y}$. Let us pick such an $r$-tuple $\underline{x}'=x'_1 \dots x'_r$. Thus,
$|N_H(\underline{x}')\cap N_H(\underline{y})|\ge \g \binom{n}{r'}/2$. We pick $\underline{y}'=y'_1\dots y'_{r'} \in
N_H(\underline{x}')\cap N_H(\underline{y})$ such that $\underline{y}'$ is disjoint from $\underline{x}$.
Note that there are at least $\g \binom{n}{r'}/2 - r'\binom{n}{r'-1}$ choices for $\underline{y}'$.
Notice that the $k$-set $\{x'_1, \dots , x'_r, y'_1 , \dots , y'_{r'} \}$ is an absorbing set for $Q$ since
$x'_1\dots x'_r y'_1 \dots y'_{r'} \,$,  $x_1 \dots x_{r'} x'_1 \dots x'_r$ and $y_1 \dots y_r y'_1 \dots y'_{r'}$ are edges in $H$.
Since an absorbing $k$-set may be counted $\binom{k}{r}$ times, this argument implies that there are at least
\[
\left( \frac{\g}{2} \binom{n}{r} - r\binom{n}{r-1} \right) \left( \frac{\g}{2} \binom{n}{r'} - r'\binom{n}{r'-1} \right) \frac{1}{\binom{k}{r}} \ge \g^3  n^{k}
\]
absorbing $k$-sets for $Q$. 

\medskip
Now assume that Case (a) holds. This implies that there are at least $(\frac{1}{2} + \g) \binom{n}{r}$  good $r$-tuples for
$\underline{y}$.
 By (\ref{minr'}), $d_H (\underline{x})\ge  (1/2 -2 \alpha )\binom{n}{r}$. So there are at least
 $(\g - 2\a)\binom{n}{r}\ge \g \binom{n}{r}/2$ $r$-tuples in $N_H(\underline{x})$ that are good for $\underline{y}$.  Thus, \eqref{eq:good} holds and consequently $Q$ has at least $\g^3 n^{k}$ absorbing $k$-sets.

\medskip
Next assume Case (b) holds. For any two (not necessarily disjoint) $r'$-tuples $\underline{a}$, $\underline{b} \in \binom{V(H)}{r'}$ we call $\underline{a}$
a \emph{good $r'$-tuple for $\underline{b}$} if $|N_H (\underline{a}) \cap N_H (\underline{b})|\geq \g \binom{n}{r}/2$.
By arguing in an identical fashion as before, note that
 $Q$ has at least $ \g^3 n^{k}$ absorbing $k$-sets if there are
\begin{align}\label{eq:good2}
  \text{at least $\frac{\g}{2} \binom{n}{r'}$ good $r'$-tuples in $N_H(\underline{y})$ for $\underline{x}$}.
\end{align}

Let $\Lambda:=\{ \underline{a} \in \binom{V(H)}{r'}: d_H(\underline{a})\ge (\frac12 + \g)\binom{n}{r} \}$.
So by assumption, $|\Lambda |\ge 2\g \binom{n}{r'}$. 
Note that every $\u{a} \in \Lambda $ is good for arbitrary $\u{b} \in \binom{V(H)}{r'}$ since
$|N_H (\u{a})|\geq (1/2+\gamma )\binom{n}{r}$, $|N_H (\u{b})|\geq (1/2- 2 \alpha )\binom{n}{r}$ and therefore
$|N_H (\u{a}) \cap N_H (\u{b})| \geq (\gamma -2\alpha)\binom{n}{r} \geq \gamma \binom{n}{r}/2$.
Thus, if $|\Lambda \cap N_H (\u{y})|\geq \gamma \binom{n}{r'}/2$ then (\ref{eq:good2}) is satisfied.
Therefore, we may assume that $|\Lambda \cap N_H (\u{y})|< \gamma \binom{n}{r'}/2$.

We also assume that \eqref{eq:good} fails (otherwise we are done). 
Thus, less than $\gamma \binom{n}{r}/2$ $r$-tuples in $N_H (\u{x})$ are good for $\u{y}$ and consequently,
at least $(\frac{1}{2}-2\alpha)\binom{n}{r}-\frac{\g}{2}\binom{n}{r}$ $r$-tuples $\u{x'} \in N_H (\u{x})$
satisfy $|N_H(\underline{x}')\cap N_H(\underline{y})|< \g \binom{n}{r'}/2$.
We pick such an $r$-tuple $\underline{x}'$ that is disjoint from $\underline{y}$; there are at least $(\frac12 - 2\a)\binom{n}{r} - \frac{\g}{2} \binom{n}{r} - r\binom{n}{r-1}\ge (\frac12 - \g)\binom{n}r$ $r$-tuples with this property. Since
\[
| N_H(\underline{x}') \cup N_H(\underline{y}) | \ge 2 \left(\frac12 - 2\a \right)\binom{n}{r'} - \frac{\g}{2} \binom{n}{r'}
 \ge \binom{n}{r'} - {\g} \binom{n}{r'},
\]
it follows that
\begin{align}
| \Lambda \cap N_H(\underline{x}')| & \ge |\Lambda| - | \Lambda \cap N_H(\underline{y})| - | \overline{(N_H(\underline{x}')
\cup N_H(\underline{y}))} | \nonumber \\
& \ge  2\g \binom{n}{r'} - \frac{\g}{2} \binom{n}{r'} - {\g} \binom{n}{r'} = \frac{\g}2 \binom{n}{r'}. \label{eq;x'y'}
\end{align}
Now pick any $\underline{w}' \in \Lambda \cap N_H(\underline{x}')$ that is disjoint from $Q$.
(Note there are at least $\frac{\g}2 \binom{n}{r'} - k\binom{n}{r'-1} \ge \frac{\g}{3} \binom{n}{r'}$ choices for $\underline{w}'$.)
Next pick an $r'$-tuple $\underline{y}'\in N_H(\underline{y})$ such that $\underline{y}'$ is disjoint from
$\underline{x}$, $\underline{x}'$ and $\underline{w}'$. (There are at least $(\frac12 - 2\a) \binom{n}{r'} - 2k\binom{n}{r'-1}
\ge (\frac12 - \g) \binom{n}{r'}$ choices for $\underline{y}'$ here.)
By the definition of $\Lambda$,  there are at least $(\g - 2\a) \binom{n}{r}$ $r$-sets in $N_H(\underline{w}'
)\cap N_H(\underline{y}')$.
We pick $\underline{z}'\in N_H(\underline{w}')\cap N_H(\underline{y}')$  such that $\underline{z}'$ is disjoint from
$\underline{x}$, $\underline{y}$ and $\underline{x}'$. (There are at least $(\g - 2\a) \binom{n}{r} - 2k\binom{n}{r-1}
\ge \g  \binom{n}{r}/2$ choices for $\underline{z}'$ here.)

Let $S$ denote the $2k$-set consisting of the vertices contained in $\underline{x}'$, $\underline{y}'$, $\underline{w}'$
 and $\underline{z}'$. By the choice of $\underline{x}'$, $\underline{y}'$, $\underline{w}'$
 and $\underline{z}'$, $S$ is an absorbing $2k$-set for $Q$.

In summary, there are at least $(\frac12 - \g) \binom{n}{r} $ choices for $\underline{x}'$, at least
$\frac{\g}{3} \binom{n}{r'} $ choices for $\underline{w}'$, at least $(\frac12 - \g) \binom{n}{r'}$ choices for
$\underline{y}'$  and at least $\frac{\g}{2} \binom{n}{r}$ choices for $\underline{z}'$. Since each absorbing $2k$-set may be counted $\binom{2k}{r} \binom{2k-r}{r'} \binom{k}{r}$ times, there are at least
\[
 \left(\frac12 - \g \right)^2 \binom{n}{r} \binom{n}{r'} \frac{\g}3 \binom{n}{r'} \frac{\g}{2} \binom{n}{r}
\times \frac{1}{\binom{2k}{r} \binom{2k-r}{r'} \binom{k}{r}}
 \stackrel{(\ref{eq:agxi})}{\geq } \g ^3 n^{2k}
\]
absorbing $2k$-sets for $Q$, as desired.
\endproof

\noindent
{\bf Proof of Claim~\ref{clm:abG}.}  Note that by (\ref{minr}) and (\ref{minr'}),
\begin{align}\label{Gmin}
d_G (\u{x}) \geq (1/2-\gamma )N' \text{ for all } \u{x} \in X^r \ \text{ and } \ d_G (\u{y}) \geq (1/2-\gamma )N \text{ for all } \u{y} \in Y^{r'}.
\end{align}
Further by assumption, the following conditions hold.
\begin{description}
\item[(i)] There exists a vertex $ \u{a} \in X^r$ such that at most $(\frac{1}{2} + \gamma) N$
vertices $\u{b} \in X^r$ satisfy $| N_G(\u{a}) \cap N_G(\u{b}) | \ge \gamma N'$.

\item[(ii)] $| \{ \u{v} \in Y^{r'} : d_G(\u{v})\ge (\frac12 + \gamma)N \} |< 2\gamma N'$.
\end{description}

Let $B':= N_G(\u{a})\subseteq Y^{r'}$ and 
$A'':=\{ \u{x} \in X^r: | B' \cap N_G (\u{x}) | < \g N' \}$.  Then $|B'| \ge (\frac12 - \gamma)N'$ and $|A''|\ge (\frac12 - \g) N$.

We also need an upper bound on $|B'|$. Fix $\u{x} \in A''$. Since $|N_G(\u{x})| \ge (\frac12 - \gamma) N'$,  we have
\[
|B'| + \left(\tfrac12 - \gamma \right) N' \le |B'| + |N_G(\u{x})| = |B'\cup N_G(\u{x})| + |B' \cap N_G (\u{x})| \le N' + \g N',
\]
which gives $|B'|\le (\tfrac12 + 2 \g) N'$.

By the definition of $A''$, we have $e(A'', B')\le \g N' |A''| $. Thus, (\ref{Gmin}) implies  that
\begin{equation}\label{eq:ABbar}
e (A'', B'')\ge (1/2 - 2\g) N'  |A''|,
\end{equation}
where $B'':= Y^{r'}\setminus B'$. Next we show that $e(A', B'')$ is very small, where $A':=X^r \setminus A''$.

\begin{claim}\label{clm:AcB}
$e(A', B'')\le 8 \sqrt{\g} |A'| |B''|$.
\end{claim}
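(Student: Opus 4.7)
The plan is to use global edge-count estimates, derived from the minimum-degree bound (\ref{Gmin}) and the failure of Case~(b), together with the structural properties of $A''$ and $B'$ already established, to obtain a linear-in-$|A'|$ upper bound on $e(A', B'')$ and then to verify this bound against $8\sqrt{\g}|A'||B''|$.

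First I would derive three auxiliary estimates. \emph{(1) Total edge bound}: since at most $2\g N'$ vertices $\u v \in Y^{r'}$ satisfy $d_G(\u v)\geq (\tfrac12+\g)N$ (failure of Case~(b)), summing degrees on the $Y$-side yields $e(G)\leq (\tfrac12+2\g)NN'$. \emph{(2) Lower bound on $e(A', B')$}: since $B'=N_G(\u a)$, double counting gives $e(X^r,B')=\sum_{\u v\in B'} d_G(\u v)\geq (\tfrac12-\g)N\cdot |B'|\geq (\tfrac12-\g)^2 NN'$, and subtracting $e(A'',B')< \g N'|A''|\leq \g NN'$ (by the definition of $A''$) gives $e(A',B')\geq (\tfrac14-2\g)NN'$. \emph{(3) Upper bound on $\sum_{\u x\in A'}d_G(\u x)$}: combining (1) with the min-degree bound $\sum_{\u x\in A''} d_G(\u x)\geq (\tfrac12-\g)N'|A''|=(\tfrac12-\g)N'(N-|A'|)$ yields $\sum_{\u x\in A'} d_G(\u x)= e(G)-\sum_{\u x\in A''} d_G(\u x)\leq 3\g NN'+(\tfrac12-\g)N'|A'|$.

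Combining (2) and (3) gives the key inequality
\[
e(A',B'')\;=\;\sum_{\u x\in A'} d_G(\u x)-e(A',B')\;\leq\;\bigl(\tfrac12-\g\bigr)N'|A'|-\bigl(\tfrac14-5\g\bigr)NN'.
\]
If the right-hand side is non-positive then $e(A',B'')=0$ and the claim holds trivially. Otherwise, using $|A'|\leq (\tfrac12+\g)N$ (which follows from $|A''|\geq (\tfrac12-\g)N$) and $|B''|\geq (\tfrac12-2\g)N'$ (shown immediately before Claim~\ref{clm:AcB}), I would verify by direct algebra that the right-hand side is bounded above by $8\sqrt{\g}|A'||B''|$. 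The bound is linear in $|A'|$ with positive slope (for $\g$ small), so it suffices to check at $|A'|=(\tfrac12+\g)N$; after substituting the lower bound on $|B''|$ and simplifying, this reduces to $5\g-\g^2\leq 2\sqrt{\g}(1-2\g-8\g^2)$, which holds for all $\g$ sufficiently small (indeed for any $\g\leq 4/25$).

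The main technical point — more bookkeeping than real obstacle — is that at $|A'|\approx N/2$ the term $(\tfrac12-\g)N'|A'|$ almost cancels the $(\tfrac14-5\g)NN'$ subtraction, leaving only an excess of order $\g NN'$. The slack $8\sqrt{\g}|A'||B''|$ absorbs this precisely because $\sqrt{\g}\gg\g$; this is why the statement of the claim naturally carries a $\sqrt{\g}$ factor rather than the $\g$ factor one might first expect.
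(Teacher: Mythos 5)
Your proof is correct but takes a genuinely different route from the paper's. The paper argues by contradiction: assuming $e(A',B'')>8\sqrt{\g}|A'||B''|$, it defines $B_1\subseteq B''$ (vertices with degree $\ge 4\sqrt{\g}|A'|$ into $A'$) and, via the near-completeness of $e(A'',B'')$, a set $B_2\subseteq B''$ of vertices nearly complete to $A''$; it then shows $B_0:=B_1\cap B_2$ has size $\ge 2\g N'$ and that every vertex in $B_0$ has total degree $\ge(\tfrac12+\g)N$, contradicting condition~(ii). You instead run a direct global double-count: use the failure of Case~(b) to bound $e(G)$ from above, subtract the guaranteed degree contribution from $A''$ (via~(\ref{Gmin})) to bound $\sum_{\u{x}\in A'}d_G(\u{x})$, independently bound $e(A',B')$ from below, and write $e(A',B'')=\sum_{\u{x}\in A'}d_G(\u{x})-e(A',B')\le(\tfrac12-\g)N'|A'|-(\tfrac14-5\g)NN'$; the linearity-in-$|A'|$ observation (the coefficient $(\tfrac12-\g)N'-8\sqrt{\g}|B''|$ is positive for small $\g$) reduces everything to an explicit scalar inequality. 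Both arguments explain the $\sqrt{\g}$ naturally: the paper's because a deficiency $\g$ in density forces a $\sqrt{\g}$-fraction of vertices to carry a $\sqrt{\g}$-fraction of edges, yours because an excess of order $\g NN'$ must be absorbed by a slack proportional to $\sqrt{\g}\cdot NN'$. Your version is more mechanical and avoids constructing $B_0,B_1,B_2$, which is appealing; the trade-off is the explicit algebra at the end. One small correction there: the final inequality $5\g-\g^2\le 2\sqrt{\g}(1-2\g-8\g^2)$ does \emph{not} hold for all $\g\le 4/25$ (it already fails near $\g=0.1$); it holds only for $\g$ below roughly $0.09$. This is immaterial, since $\g$ is chosen far smaller in the hierarchy~(\ref{eq:agxi}), but the parenthetical constant should be dropped or corrected.
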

\begin{proof}
Assume for a contradiction that the claim is false.
Set $B_1 := \{\u{x} \in B'':  d_G(\u{x}, A') \ge 4\sqrt{\g} |A'| \}$. By assumption
\[
8\sqrt{\g} |A'| |B''| \le e(A' , B'' ) \le |B_1| |A'| +4\sqrt{\g}|A'|  |B''|,
\]
which gives that $|B_1|\ge 4\sqrt{\g} |B''|$. By \eqref{eq:ABbar}, as $|B''|\le (\frac12 + \g) N' $, we derive that
\begin{equation}\label{eq:AcB}
e(A'', B'')\ge (\tfrac12 - 2 \g) N' |A''| \ge (1-6\g)(\tfrac12+ \g) N' |A''| \ge (1 - 6\g) |A''| |B''|.
\end{equation}
Let $B_2 := \{ \u{x} \in B'' :  d_G(\u{x}, A'') \ge (1- 3\sqrt{\g}) |A''| \}$. We claim that $|B_2|\ge (1 - 3\sqrt{\g}) |B''| $. 
Indeed,  consider $\bar{e}(A'', B''):=|A''||B''|-{e}(A'', B'')$.
If $|B_2|< (1 - 3\sqrt{\g}) |B''| $, 
then $\bar{e}(A'', B'')\ge 3\sqrt{\g} |B''| 3\sqrt{\g} |A''| = 9\g |A''| |B''|$, contradicting \eqref{eq:AcB}.

Let $B_0 := B_1 \cap B_2$. We have that $|B_0|\ge (4\sqrt{\g} - 3\sqrt{\g}) |B''|$. Since $|B''|\ge N' /3$ and $\g \le 1/36$,  we derive that $|B_0| \ge \sqrt{\g}N'/3\ge 2\g N'$. For every $\u{x}\in B_0$,
we have
\begin{align*}
d_G(\u{x}) & =  d_G(\u{x}, A'') + d_G(\u{x}, A') \\
& \ge (1- 3\sqrt{\g}) |A''| + 4\sqrt{\g} |A'| = (1- 7\sqrt{\g}) |A''| + 4\sqrt{\g} N \\
& \ge \left(\frac12 - \frac{7}{2}\sqrt{\g} + 4\sqrt{\g} - \g \right)N
\ge  \left(\frac12 + \g \right)N.
\end{align*}
(The penultimate inequality follows since $ |A''|\ge  (\tfrac12 - \g) N$.)
This is a contradiction to (ii).
\end{proof}
Recall that $e(A'', B') \leq \gamma N' |A''|\leq \gamma NN'$. Thus, by (\ref{Gmin})
\begin{align}\label{eqnew}
 e(A',B')=e(X^r,B') - e(A'',B') \geq
  (1/2-\gamma )N|B'| -\g NN' \geq (1/2-4 \gamma )N|B'|.
\end{align}
 (The last inequality follows since $|B'|\geq (1/2-\g)N'$.)
 Therefore, as $e(A',B') \leq |A'||B'|$, we have  $|A'|\geq (1/2-4\g)N$.

Pick a set $X' \subseteq X^r$ of size $\lceil N/2 \rceil$ such that $|X' \cap A'|$ is maximized.
Similarly, pick a set $Y' \subseteq Y^{r'}$ of size $\lceil N'/2 \rceil$ such that $|Y' \cap B'|$ is maximized.
Set $X'':= X^r \setminus X'$ and $Y'':=Y^{r'} \setminus Y'$. 
\begin{claim}\label{targets}
The following conditions hold:
\begin{itemize}
\item $e(X',Y')\geq |X'||Y'|-\beta NN'/4$;
\item $e(X'',Y'')\geq |X''||Y''|-\beta NN'/4$;
\item $e(X',Y''),e(X'',Y') \leq \beta NN'/4$.
\end{itemize}
\end{claim}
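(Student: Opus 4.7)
The plan is to observe that $(X',X'')$ and $(Y',Y'')$ are tiny perturbations of $(A',A'')$ and $(B',B'')$, and then transfer the four edge estimates already at hand for the $A$-$B$ partition to the $X$-$Y$ one. For the first step, note that $|A'|\ge (1/2-4\g)N$ was shown just before the claim, and $|A'| = N - |A''| \le (1/2+\g)N$ since $|A''|\ge (1/2-\g)N$; similarly $|B'|\in[(1/2-\g)N', (1/2+2\g)N']$. Since $X'$ is a $\lceil N/2 \rceil$-subset of $X^r$ maximizing $|X'\cap A'|$, either $X'\supseteq A'$ (if $|A'|\le \lceil N/2\rceil$) or $X'\subseteq A'$ (if $|A'|>\lceil N/2 \rceil$), so
\[
|X' \triangle A'| = \bigl| |X'| - |A'| \bigr| \le 5\g N,
\]
and the analogous argument gives $|Y' \triangle B'|\le 2\g N'$.

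For the transfer, I would first read off four $A$-$B$ estimates from what has been established: (\ref{eqnew}) gives $\bar e(A',B') := |A'||B'|-e(A',B') \le 5\g NN'$ (since $|A'|\le (1/2+\g)N$); (\ref{eq:ABbar}) gives $\bar e(A'',B'')\le 3\g NN'$ (since $|B''|\le (1/2+\g)N'$); the definition of $A''$ gives $e(A'',B')\le \g NN'$; and Claim~\ref{clm:AcB} gives $e(A',B'')\le 8\sqrt{\g}\,NN'$. The transfer itself is a one-line inclusion: any non-edge between $X'$ and $Y'$ either is a non-edge between $A'$ and $B'$ or has an endpoint in $X'\setminus A'\subseteq X'\triangle A'$ or in $Y'\setminus B'\subseteq Y'\triangle B'$, whence
\[
\bar e(X',Y') \le \bar e(A',B') + |X'\triangle A'|\,N' + N\,|Y'\triangle B'| \le 12\g NN' \le \b NN'/4.
\]
The same three-term bound applied to the other three parts (using $X''\triangle A''=X'\triangle A'$ and $Y''\triangle B''=Y'\triangle B'$) yields $\bar e(X'',Y'')\le 10\g NN'$, $e(X'',Y')\le 8\g NN'$, and $e(X',Y'')\le 9\sqrt{\g}\,NN'$, all at most $\b NN'/4$.

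The only point requiring a moment of thought is that the cross estimate $e(X',Y'')$ is bounded in terms of $\sqrt{\g}$ rather than $\g$ (inherited from Claim~\ref{clm:AcB}); this is still fine because the hierarchy (\ref{eq:agxi}) is chosen from right to left, so one may read $\g \ll \b$ as strong enough to force $9\sqrt{\g}\le \b/4$. Beyond that, the claim is essentially bookkeeping, and I would not expect any real obstacle.
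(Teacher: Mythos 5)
Your proof is correct and follows essentially the same route as the paper: bound the symmetric differences between $(X',X'')$, $(Y',Y'')$ and $(A',A'')$, $(B',B'')$, then transfer the four edge estimates established for the $A$--$B$ partition via the obvious inclusion argument, adding correction terms of size at most $|X'\triangle A'|N'$ and $N|Y'\triangle B'|$. The paper packages the same computation using one-sided set differences (for instance $|A''\setminus X''| = |A''\cap X'| \le 5\gamma N$ and $|A'\setminus X'| \le \gamma N$) and works directly with edge counts rather than non-edge counts, but this is only a cosmetic difference in bookkeeping, and your constants still comfortably satisfy the $\gamma\ll\beta$ hierarchy.
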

\proof
Since $|A' \cap X'| \ge (1/2 - 4\gamma) N$, we have $|A'' \cap X'| \le \lceil N/2 \rceil
- (1/2 - 4\gamma) N < 5\gamma N$.
 Further, $|B' \cap Y'|\geq (1/2-\g )N'$ and so $|B'' \cap Y'| \le \lceil N'/2 \rceil
- (1/2 - \gamma) N' < 2\gamma N'$.
 Hence, 
 \begin{align*}
 e(X'',Y'') & \geq e(A'',B'')-|A'' \setminus X''|N'-|B'' \setminus Y''|N \stackrel{(\ref{eq:ABbar})}{\geq} (1/2 -2 \g )N'|A''| -7 \g NN'
 \\ & \geq (1/4 -9 {\g})NN' \geq |X''||Y''|- \b NN' /4.
 \end{align*}
 (The penultimate inequality follows as $|A''|\geq (1/2-\g )N$.)
 Note that $|A'\setminus X' |\leq \g N$ as $|A'|\leq (1/2+\g )N$ and
 $|B' \setminus Y'|\leq 2 \g N'$ as $|B'|\leq (1/2+2\g)N'$. Thus, 
  \begin{align*}
 e(X',Y') & \geq e(A',B')-|A' \setminus X'|N'-|B' \setminus Y'|N \stackrel{(\ref{eqnew})}{\geq} (1/2 - 4\g )N|B'| 
 -3 \g NN'
 \\ & \geq (1/4 -6{\g})NN' \geq |X'||Y'|- \b NN' /4.
 \end{align*}
  (The penultimate inequality follows as $|B'|\geq (1/2-\g )N'$.)
 Since  $|A'|\geq (1/2-4\g)N$, $|X'\setminus A'|\leq 5 \g N$. Further, $|B''|\geq (1/2-2\g)N'$ and so
 $|Y''\setminus B''|\leq 2 \g N'$. Thus, by Claim~\ref{clm:AcB},
 $$e(X',Y'') \leq e(A',B'') +|X' \setminus A'|N'+ |Y''\setminus B''|N \leq 8\sqrt{\g} |A'||B''|+7\gamma NN' \leq \b NN'/4.$$
 Since $|A''|\geq (1/2-\g)N$, $|X''\setminus A''|\leq \g N$. Further, $|B'|\geq (1/2-\g )N'$ and so 
 $|Y' \setminus B'|\geq 2\gamma N'$.
 Hence,
 $$e(X'',Y') \leq e(A'',B')+|X''\setminus A''|N'+|Y' \setminus B'|N \leq \g N'|A''|+3 \g NN' \leq \b NN'/4.$$
 \endproof
Claim~\ref{targets} immediately implies that $G=B_{n,k} \pm \b NN'$, as desired.
\endproof

\subsection{Proof of Lemma~\ref{lem:GH}}\label{lastlemma}
Define constants $\beta, \beta _1, \eta$ and $n_0 \in \mathbb N$ so that
\begin{align}\label{hier2}
0< 1/n_0 \ll \beta \ll \beta _1 \ll \eta \ll \eps, 1/k.
\end{align}
Let $H$ and $G$ be as in the statement of the lemma. 
Throughout this section, when it is clear from the context, we will write $V$ for the vertex set $V(H)$ and $E$ for the edge set $E(H)$. 
Note that $r\geq 2 $ and $r'\ge 1$ as $k \geq 3$.
Since $G$ is a bipartite graph with vertex classes $X^r$ and $Y^{r'}$, and $G=B_{n,k} \pm \beta NN'$, there exists a partition
 $ X_1 , X_2$ of $X^r$ and a partition $ Y_1 , Y_2$ of $Y^{r'}$ so that  
\begin{itemize}
\item $|X_1| = \lceil N/2 \rceil $, $|X_2| = \lfloor N/2 \rfloor$, $ |Y_1| = \lceil N'/2 \rceil $, and $ |Y_2| = \lfloor N'/2 \rfloor $;
\item $ | E(G) \triangle E(K_{X_1, Y_1} \cup K_{X_2, Y_2}) | \le \beta N N'$, in other words, $G$ becomes the disjoint union of two complete bipartite graphs $K_{X_1, Y_1}$ and  $K_{X_2, Y_2}$ after adding or removing at most $\beta N N'$ edges.  
\end{itemize}
Throughout the proof, we assume that $B_{n, k}= K_{X_1, Y_1} \cup K_{X_2, Y_2}$.

We call a $k$-subset $S$ of $V$ \emph{bad} if there are two partitions $ P, P'$ and  $Q , Q'$ of $S$ such that both of the following
conditions hold:
\begin{enumerate}
\item $PP'\in E(B_{n,k})$, namely, either $P\in X_1$ and $P'\in Y_1$ or $P\in X_2$ and $P'\in Y_2$;
\item $QQ'\not\in E(B_{n,k})$, namely, either $Q\in X_1$ and $Q'\in Y_2$ or $Q\in X_2$ and $Q'\in Y_1$.
\end{enumerate}

\begin{claim}\label{badclaim}
At most $\beta  N N'$ $k$-subsets of $V$ are bad.
\end{claim}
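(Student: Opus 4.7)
The plan is to exhibit an injection from the set of bad $k$-subsets into the symmetric difference $E(G) \triangle E(B_{n,k})$, which by hypothesis has size at most $\beta N N'$. The point is that while the definition of ``bad'' refers only to the partition $(X_1,X_2,Y_1,Y_2)$ of $(X^r,Y^{r'})$, the key link to $G$ is that $G$ encodes edges of $H$ in a partition-invariant way: for any partition $(R,R')$ of a $k$-subset $S$ with $|R|=r$ and $|R'|=r'$, the pair $RR'$ is an edge of $G$ if and only if $S\in E(H)$.

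Given a bad $k$-subset $S$, fix witnessing partitions $(P,P')$ and $(Q,Q')$ of $S$ with $PP'\in E(B_{n,k})$ and $QQ'\notin E(B_{n,k})$. By the observation above, $PP'$ and $QQ'$ are either both edges of $G$ or both non-edges of $G$, according to whether $S\in E(H)$ or not. In the first case $QQ' \in E(G)\setminus E(B_{n,k})$, and in the second case $PP' \in E(B_{n,k})\setminus E(G)$; either way, we obtain a pair lying in $E(G)\triangle E(B_{n,k})$ whose two endpoints partition $S$.

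This assignment is injective: any pair $(R,R')\in X^r \times Y^{r'}$ determining a partition (i.e.\ with $R\cap R' = \emptyset$) satisfies $R\cup R' = S$, so the pair recovers the $k$-subset. Therefore the number of bad $k$-subsets is bounded by $|E(G)\triangle E(B_{n,k})| \le \beta N N'$, as required. No real obstacle arises here; the only minor nuisance is that $E(B_{n,k})$ contains pairs $(R,R')$ with $R\cap R'\neq \emptyset$ which are automatically non-edges of $G$, but these do not affect the bound since we only invoke $|E(G)\triangle E(B_{n,k})|\le \beta N N'$ as a whole.
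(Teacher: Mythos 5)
Your proof is correct and follows essentially the same argument as the paper: map each bad $k$-set $S$ to the edge $QQ'$ of $E(G)\setminus E(B_{n,k})$ if $S\in E(H)$, or to $PP'\in E(B_{n,k})\setminus E(G)$ otherwise, and observe that the map is injective since the (disjoint) endpoints of the chosen pair reconstruct $S$. Your closing remark about pairs $(R,R')$ with $R\cap R'\neq\emptyset$ being vacuous non-edges of $G$ is a correct, if implicit, clarification of a point the paper glosses over.
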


\begin{proof}
Let $S\subseteq V$ be a bad $k$-set and let $P,P'$ and $Q,Q'$ be partitions of $S$ as in the definition of a bad $k$-set. 
If $S\in E(H)$, then $QQ' \in E(G)\setminus E(B_{n,k})$; otherwise $PP'\in E(B_{n,k})\setminus E(G)$. In either case $S$ gives rise to an edge from $E(G) \triangle E(B_{n,k})$. Furthermore, two different bad $k$-sets give two different edges of $E(G) \triangle E(B_{n,k})$. Since $|E(G)\triangle E(B_{n,k})| \le \beta N N'$, the number of bad $k$-sets is at most $\beta N N'$.
\end{proof}

Viewing $X_1, X_2$ as the colors of $r$-subsets of $V$, we define the color function $\phi: X^{r} \to \{X_1, X_2\}$ with $\phi(P)= X_i$ if $P\in X_i$. Similarly we define $\psi: Y^{r'} \to \{Y_1, Y_2\}$ such that $\psi(Q)= Y_i$ if $Q\in Y_i$. Given two distinct vertices $u, v\in V$, we define two symmetric functions\footnote{It looks simpler to define $C, D, C', D'$ functions as the families of $(r-1)$-sets or $(r'-1)$-sets instead, e.g., $C'(u,v)= \{ T\in \binom{V\setminus \{u,v\} }{r' -1}: \psi(T+u) = \psi(T+v) \}$. However, when $k=3$ (thus $r'=1$), this new definition defines $C'(u,v)= \emptyset$ for all $u, v\in V$, while our present definition of $C'(u,v)$ returns $\{uv\}$ when $\psi(u)= \psi(v)$ and $\emptyset$ otherwise.}
\begin{align*}
C(u, v) & := \left\{ S\in \binom{V}{r+1}: u, v\in S, \phi(S- u) = \phi(S-v) \right\}, \\
D(u, v) &:= \left\{ S\in \binom{V}{r+1}: u, v\in S, \phi(S- u) \ne \phi(S-v) \right\}. 
\end{align*}
Similarly, we define 
\begin{align*}
C'(u, v) & := \left\{ S\in \binom{V}{r'+1}: u, v\in S, \psi(S- u) = \psi(S-v) \right\}, \\
D'(u, v) &:= \left\{ S\in \binom{V}{r'+1}: u, v\in S, \psi(S- u) \ne \psi(S-v) \right\}. 
\end{align*}
Note that the definition of $D(u,v)$ is \emph{different} to the definition of $D_F (u,v)$ stated in Section~\ref{usesec}. Thus, when referring to the latter parameter we never omit the subscript.
Note that
\begin{equation}
\label{eq:CD}
|C(u, v)| + | D(u, v)| = \binom{n-2}{r-1} \quad \text{and} \quad |C'(u, v)| + | D'(u, v)| = \binom{n-2}{r'-1}. 
\end{equation}

\begin{claim}\label{clm:X=Y}
All but at most $\beta_1  n^{2}$ pairs of vertices $u, v\in V$ satisfy the following two properties.
\begin{description}
\item[(i)] $ |C(u, v)| \le \beta_1 n^{r-1}$ if and only if $ |C'(u, v) | \le \beta_1 n^{r'-1}$.
\item[(ii)] $ |D(u, v)| \le \beta_1 n^{r-1}$ if and only if $|D'(u, v) |\le \beta_1 n^{r'-1}$.
\end{description}
\end{claim}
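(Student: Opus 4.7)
The plan is to show that every pair $u,v$ failing (i) or (ii) lies in $\Omega(\beta_1 n^{k-2})$ bad $k$-sets (in the sense of Claim~\ref{badclaim}), and then a double count against the total of at most $\beta NN'$ bad $k$-sets from Claim~\ref{badclaim} will bound the number of failing pairs.

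Using \eqref{eq:CD}, a failure of (i) or (ii) takes one of four symmetric forms: one of $|C(u,v)|,|D(u,v)|$ exceeds $\beta_1 n^{r-1}$, while the matching one of $|C'(u,v)|,|D'(u,v)|$ is at most $\beta_1 n^{r'-1}$, so its complement is at least $\binom{n-2}{r'-1}/2$ (using $\beta_1\ll 1/(r'-1)!$ from \eqref{hier2}). Up to interchanging $X_1\leftrightarrow X_2$ or $Y_1\leftrightarrow Y_2$ the four cases are structurally identical, so I will treat only the representative case $|C(u,v)|>\beta_1 n^{r-1}$ and $|D'(u,v)|\ge \binom{n-2}{r'-1}/2$ in detail. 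The key construction is: for every $S\in C(u,v)$ and $T\in D'(u,v)$ with $S\cap T=\{u,v\}$, set $K:=S\cup T$, which is a $k$-set since $r+r'=k$. Because $\phi(S-u)=\phi(S-v)=X_i$ for some $i$, while $\{\psi(T-u),\psi(T-v)\}=\{Y_1,Y_2\}$, exactly one of the two partitions $(S-u,T-v)$ and $(S-v,T-u)$ of $K$ lies in $E(B_{n,k})$; hence $K$ is bad. The remaining three failure cases follow the same template with $C,D,C',D'$ permuted.

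For the count, the number of $T\in D'(u,v)$ meeting $S$ outside $\{u,v\}$ is at most $(r-1)\binom{n-3}{r'-2}=O(n^{r'-2})$, negligible compared to $|D'(u,v)|$. Hence each failing pair yields at least $\tfrac12 |C(u,v)||D'(u,v)|=\Omega(\beta_1 n^{k-2})$ admissible pairs $(S,T)$, and each bad $K$ through $u,v$ is produced by at most $\binom{k-2}{r-1}$ of them (the $(r-1)$-set $S\setminus\{u,v\}$ is chosen inside $K\setminus\{u,v\}$). So $u,v$ lies in $\Omega(\beta_1 n^{k-2})$ distinct bad $k$-sets. Summing over all $m$ failing pairs and using Claim~\ref{badclaim},
\begin{align*}
m\cdot \Omega(\beta_1 n^{k-2}) \le \binom{k}{2}\cdot \beta NN' = O(\beta n^k),
\end{align*}
which gives $m=O((\beta/\beta_1)n^2)\le \beta_1 n^2$ by the hierarchy $\beta\ll\beta_1$ of \eqref{hier2}. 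The only notational subtlety is the degenerate range $r'=1$ (i.e.\ $k=3$), where $C'(u,v)$ and $D'(u,v)$ are subsets of $\{\{u,v\}\}$ and $T=\{u,v\}$ is forced; there $K=S$ and each $S\in C(u,v)$ directly yields a bad $k$-set through $u,v$, so the counting still produces $\Omega(\beta_1 n^{k-2})$ bad sets per failing pair. No deeper obstacle arises; the whole argument is essentially a transfer between the $r$-side and $r'$-side colorings of $H$ through the bad-set bookkeeping.
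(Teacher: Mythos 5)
Your proof is correct and follows essentially the same route as the paper: each pair failing (i) or (ii) is shown to lie in $\Omega(\beta_1 n^{k-2})$ bad $k$-sets by crossing a same-color family on one side with a different-color family on the other, and then double counting against Claim~\ref{badclaim} yields the bound using $\beta\ll\beta_1$. The paper writes the final count as a contradiction and treats the symmetric subcase of each failure (i.e.\ $|C|$ small, $|C'|$ large, crossing $D\times C'$), but these are cosmetic differences; your one-line description of the ``four symmetric forms'' is slightly imprecise for the subcases where the $r$-side quantity is the small one (there one should say the \emph{other} $r'$-side family is large rather than that the matching one is small), but your detailed construction and counting are sound and carry over verbatim.
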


\begin{proof}
Suppose for a contradiction that the claim is false.
Consider two vertices $u,v \in V$ such that (i) fails. Suppose that $|C(u, v)| \le \beta_1 n^{r-1} $ but  $|C'(u, v)| > \beta_1 n^{r'-1}$ (the other case can be proven analogously). By \eqref{eq:CD}, we have $|D(u,v)| \ge \binom{n-2}{r-1} - \beta_1 n^{r-1}$. 
We claim that $P\cup Q$ is a bad $k$-set for all $P\in D(u, v)$ and $Q\in C'(u,v)$ such that $P\cap Q = \{u, v\}$. Indeed, $P\in D(u, v)$ implies that one of $P-u$ and $P-v$ is in $X_1$ and the other is in $X_2$, and $Q\in C'(u, v)$ implies that $Q-u$ and $Q-v$ are both in $Y_i$ for some $i\in \{1, 2\}$.
If $P\cap Q = \{u, v\}$, then precisely one of the two pairs $\{P-u, Q-v\}$ and $\{P-v, Q-u\}$ is in $E(B_{n,k})$. 
Thus, by definition, the $k$-set $P\cup Q$ is bad.

Next consider a pair of vertices $u, v \in V$ that fails (ii). Suppose that $| D(u, v)| \le \beta_1 n^{r-1} $ but  $D'(u,v) >  \beta_1 n^{r'-1}$ (again, the other case can be proven analogously). By \eqref{eq:CD}, $|C(u,v)| \ge \binom{n-2}{r-1} - \beta_1 n^{r-1}$. A similar argument to before yields that $P'\cup Q'$ is a bad $k$-set for all $P'\in C(u, v)$ and $Q'\in D'(u, v)$ such that $P'\cap Q' = \{u, v\}$.

Note that given any $(r'-1)$-set, at most $(r'-1)\binom{n}{r-2}$ $(r-1)$-subsets of $V$ are \emph{not} disjoint from this set. Thus, given any $(r'+1)$-set $Q$ that contains $u$ and $v$, at most  $(r'-1)\binom{n}{r-2}$ $(r+1)$-sets $P$ that contains $u$ and $v$ satisfy $P\cap Q \ne \{u, v\}$. Further, note that $\binom{n-2}{r-1} - \beta_1 n^{r-1} - (r'-1)\binom{n}{r-2} \geq  \beta_1 n^{r-1}$.
Therefore,  
when considering all possible pairs of vertices $u, v \in V$ that fail (i) or (ii), we obtain at least 
\[
\frac{\beta_1 n^2 \times   \beta_1 n^{r-1} \times \beta_1 n^{r'-1}} {\binom{k}{2}\binom{k-2}{r-1}} \stackrel{(\ref{hier2})}{>} \beta \binom{n}{r} \binom{n}{r'} 
\]
bad $k$-sets as  a $k$-set may be counted at most $\binom{k}{2} \binom{k-2}{r-1}$ times. This contradicts Claim~\ref{badclaim}. 
\end{proof}

We call an (unordered) pair of vertices $u, v \in V$ \emph{consistent} if $u,v$ satisfy both (i) and (ii) from Claim~\ref{clm:X=Y}. Thus, all but at most $\beta_1  n^{2}$ pairs of vertices from $V$  are consistent.
We call two vertices $u, v \in V$ \emph{similar} if  $|C(u, v)| \le \beta _1 n^{r-1}$ or $| D(u, v) | \le \beta _1 n^{r-1}$.
\begin{claim}\label{clm:sim}
Less than $\beta_1 n^2$ pairs of vertices $u, v \in V$ are consistent but not similar.
\end{claim}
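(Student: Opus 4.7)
The plan is to argue by contradiction and reuse the bad $k$-set counting device from the proof of Claim~\ref{clm:X=Y}. Suppose for a contradiction that at least $\beta_1 n^2$ pairs $\{u,v\} \subseteq V$ are consistent yet not similar. Unpacking the definitions, ``not similar'' means \emph{both} $|C(u,v)| > \beta_1 n^{r-1}$ and $|D(u,v)| > \beta_1 n^{r-1}$, and then the two biconditionals (i) and (ii) from Claim~\ref{clm:X=Y} simultaneously force $|C'(u,v)| > \beta_1 n^{r'-1}$ and $|D'(u,v)| > \beta_1 n^{r'-1}$, so all four ``neighbourhoods'' are large at once.

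I would then apply exactly the bad-set recipe that appeared in the proof of Claim~\ref{clm:X=Y}: whenever $P \in D(u,v)$, $Q \in C'(u,v)$, and $P \cap Q = \{u,v\}$, the $k$-set $P \cup Q$ is bad, since $P - u$ and $P - v$ lie in opposite classes $X_1, X_2$ while $Q - u$ and $Q - v$ both lie in the same $Y_j$, so the partition $(P-u,\, Q-v)$ of $P \cup Q$ is an edge of $B_{n,k}$ while $(P-v,\, Q-u)$ is not. For each of the $\beta_1 n^2$ consistent-but-not-similar pairs $\{u,v\}$, I would fix any $Q \in C'(u,v)$ (at least $\beta_1 n^{r'-1}$ choices) and then count valid $P \in D(u,v)$: at most $(r'-1)\binom{n}{r-2}$ members of $D(u,v)$ can share a third vertex with $Q$, so at least $\beta_1 n^{r-1} - (r'-1)\binom{n}{r-2} \ge \tfrac12 \beta_1 n^{r-1}$ suitable $P$'s remain once $n$ is large. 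This yields at least $\tfrac12 \beta_1^3 \, n^{2 + (r-1) + (r'-1)} = \tfrac12 \beta_1^3 n^k$ triples $(\{u,v\}, P, Q)$, each of which produces a bad $k$-set.

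Each bad $k$-set $S$ can arise in at most $\binom{k}{2}\binom{k-2}{r-1}$ ways in this count (choose $\{u,v\} \subseteq S$, then the $r-1$ extra vertices of $P$ inside $S \setminus \{u,v\}$). Hence we obtain at least $\beta_1^3 n^k / \bigl(2\binom{k}{2}\binom{k-2}{r-1}\bigr)$ distinct bad $k$-sets, which exceeds $\beta N N' \le \beta n^k/(r!\,r'!)$ by the hierarchy $\beta \ll \beta_1$ from \eqref{hier2}, contradicting Claim~\ref{badclaim}. The main point to verify is the arithmetic of the dominant orders, namely that $2 + (r-1) + (r'-1) = k$ matches the $n^k$ scaling of the upper bound on bad $k$-sets, and that the correction $(r'-1)\binom{n}{r-2}$ is genuinely lower order than $\beta_1 n^{r-1}$; both of these are routine given the framework already set up for Claim~\ref{clm:X=Y}, so no new ideas beyond that template are needed.
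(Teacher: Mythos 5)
Your proof is correct and takes essentially the same approach as the paper: you unpack consistency to obtain $|C'(u,v)| > \beta_1 n^{r'-1}$, reuse the bad $k$-set construction from Claim~\ref{clm:X=Y} on triples $(\{u,v\}, P, Q)$ with $P \in D(u,v)$, $Q \in C'(u,v)$, $P\cap Q = \{u,v\}$, divide by the same overcounting factor $\binom{k}{2}\binom{k-2}{r-1}$, and contradict Claim~\ref{badclaim} via $\beta \ll \beta_1$. The only cosmetic difference is that the paper keeps the exact expression $\beta_1 n^{r-1} - (r'-1)\binom{n}{r-2}$ in the final display rather than passing to the cleaner bound $\tfrac12\beta_1 n^{r-1}$.
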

\begin{proof} 
Let $u,v \in V$ be consistent but not similar. Thus, $|C(u, v)| > \beta _1 n^{r-1}$ and $| D(u, v) | > \beta _1 n^{r-1}$. Since $u, v$ are consistent, $ |C(u, v)| > \beta_1 n^{r-1}$ implies that $ |C'(u, v) | > \beta_1 n^{r'-1}$.  As seen in the proof of Claim~\ref{clm:X=Y},  $P\cup Q$ is a bad $k$-set for all $P\in D(u, v)$ and $Q\in C'(u,v)$ such that $P\cap Q = \{u, v\}$.  Thus, if there are at least $\beta_1 n^2$ pairs of vertices $u, v \in V$ that are not similar but are consistent, then the number of bad $k$-subsets of $V$ is at least
\[
\frac{ \beta_1 n^2 \left( \beta_1 n^{r-1} - (r'-1)\binom{n}{r-2} \right) \beta_1 n^{r'-1} } {\binom{k}{2} \binom{k-2}{r-1}} 
\stackrel{(\ref{hier2})}{>} \beta  \binom{n}{r} \binom{n}{r'},
\]
contradicting Claim~\ref{badclaim}. 
\end{proof}

Let $v_0\in V$ be a vertex such that at least $(1- 4\beta_1)n$ vertices of $V$ are both consistent and similar to $v_0$: such 
a vertex $v_0$ exists because otherwise at least $4\beta_1 n^2/ 2 = 2\beta_1 n^2$ pairs of vertices are not consistent or
are not similar, contradicting Claim~\ref{clm:X=Y} or Claim~\ref{clm:sim}.  Let $V_0$ be the set of vertices in $V$ that are not consistent or not  similar to $v_0$. (Note that $v_0 \in V_0$.) The choice of $v_0$ implies that $|V_0|\le 4\beta_1 n$.  Define
\[
V_1 := \{ v\in V\setminus V_0:  | D(v, v_0)| \le \beta_1 n^{r-1} \}  \text{  and  } 
V_2 := \{ v\in V \setminus V_0:  | C(v, v_0)| \le \beta_1 n^{r-1} \}.
\]
Note that $V_1\cap V_2 = \emptyset$ otherwise by \eqref{eq:CD}, it implies that $\binom{n-2}{r-1} \le 2\beta_1 n^{r-1}$, a contradiction.

\begin{claim}\label{ab} The following properties hold.
\begin{itemize}
\item[(a)] $| D (v, v') | \le 3\b_1 n^{r-1}$ and $| D' (v, v') | \le 3\b_1 n^{r'-1}$  for all pairs of distinct
vertices $v,v' \in V$ such that $v, v'\in V_1$ or $v, v'\in V_2$;  
\item[(b)] $| C (v, v') | \le 3\b_1 n^{r-1}$ and $| C' (v, v') | \le 3\b_1 n^{r'-1}$  for all $v\in V_1$ and $v'\in V_2$.
\end{itemize}
\end{claim}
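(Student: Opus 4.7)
The plan is to prove all four sub-statements of Claim~\ref{ab} by a single \emph{swap argument}: given a bad set for the pair $\{v,v'\}$ in one of $C, D, C', D'$ that avoids $v_0$, we trade $v$ or $v'$ for $v_0$ to produce a bad set for the pair $\{v, v_0\}$ or $\{v', v_0\}$, and then invoke the size bound coming from the definition of $V_1$ or $V_2$ (using consistency to pass between levels $r$ and $r'$).

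I would first handle the $D$-bound in (a) for $v,v'\in V_1$, which is the cleanest case. Fix $S\in D(v,v')$ with $v_0\notin S$, set $T:=S\setminus\{v,v'\}$, and define the two $(r+1)$-sets $S_1:=T\cup\{v,v_0\}$ and $S_2:=T\cup\{v',v_0\}$. The condition $S\in D(v,v')$ reads $\phi(T+v)\ne\phi(T+v')$, and since $\phi$ is $2$-valued, exactly one of $\phi(T+v_0)\ne\phi(T+v)$ and $\phi(T+v_0)\ne\phi(T+v')$ holds; equivalently, exactly one of $S_1\in D(v,v_0)$ and $S_2\in D(v',v_0)$ holds. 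The maps $S\mapsto S_1$ and $S\mapsto S_2$ are injective on $\{S : v_0\notin S\}$ (since $T$ can be recovered from either image), so
\[
|D(v,v') \setminus \{S: v_0\in S\}|\le |D(v,v_0)|+|D(v',v_0)| \le 2\beta_1 n^{r-1},
\]
and the $\binom{n-3}{r-2} = O(n^{r-2})$ sets containing $v_0$ push the total to $3\beta_1 n^{r-1}$ for $n$ large. The case $v,v'\in V_2$ in (a) is the mirror image: now $\phi(T+v)\ne\phi(T+v')$ together with $\phi$ being $2$-valued forces $\phi(T+v_0)$ to agree with one of $\phi(T+v), \phi(T+v')$, so the injection lands instead in $C(v,v_0)\cup C(v',v_0)$, both of which have size $\le \beta_1 n^{r-1}$. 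For (b), an $S\in C(v,v')$ with $v_0\notin S$ satisfies $\phi(T+v)=\phi(T+v')$, and then either $\phi(T+v_0)\ne\phi(T+v)$, giving $S_1\in D(v,v_0)$, or $\phi(T+v_0)=\phi(T+v')$, giving $S_2\in C(v',v_0)$; the mixed bound $|D(v,v_0)|+|C(v',v_0)|\le 2\beta_1 n^{r-1}$ then delivers $|C(v,v')|\le 3\beta_1 n^{r-1}$.

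The $D'$ and $C'$ statements follow by rerunning the identical argument with $\psi$ in place of $\phi$ and $r'$ in place of $r$. What makes this legitimate is that consistency of the pair $\{v,v_0\}$ (which holds because $v\in V\setminus V_0$) transfers the defining bound $|D(v,v_0)|\le \beta_1 n^{r-1}$ for $v\in V_1$ into $|D'(v,v_0)|\le \beta_1 n^{r'-1}$, and similarly $|C(v,v_0)|\le \beta_1 n^{r-1}$ for $v\in V_2$ into $|C'(v,v_0)|\le \beta_1 n^{r'-1}$. I do not foresee a genuine obstacle; the bookkeeping is just a matter of tracking which of $=$ or $\ne$ each of $C, D, C', D'$ encodes and reading off the correct two-valued deduction, together with the mild degeneracy $r'=1$ (i.e.\ $k=3$), where every $D'$ or $C'$ set has size at most $1$ and the inequality $\le 3\beta_1 n^{r'-1}$ holds trivially.
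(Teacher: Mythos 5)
Your proposal is correct and follows essentially the same approach as the paper: the paper also replaces the pair $\{v,v'\}$ by $\{v,v_0\}$ or $\{v',v_0\}$ and reads off the triangle-type inequalities $|D(v,v')|\le |D(v,v_0)|+|D(v',v_0)|+|\mathcal A|$ and $|D(v,v')|\le |C(v,v_0)|+|C(v',v_0)|+|\mathcal A|$ (where $\mathcal A$ is the family of $(r-1)$-sets through $v_0$, absorbing the same $O(n^{r-2})$ error term you bound via $\binom{n-3}{r-2}$), then transfers to the $D',C'$ level via consistency exactly as you describe. One small correction: your parenthetical about $r'=1$ (i.e.\ $k=3$) being ``trivial'' is not quite right. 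When $r'=1$ the upper bound $3\beta_1 n^{r'-1}=3\beta_1$ is a small constant less than $1$, while every $D'$ or $C'$ set has size $0$ or $1$; so the claimed inequality is actually the nontrivial assertion that $|D'(v,v')|=0$ (respectively $|C'(v,v')|=0$) in the relevant cases, i.e.\ that $\psi$ is constant on $V_1$ and on $V_2$. Your general argument does establish this without modification --- the consistency transfer gives $|D'(v,v_0)|\le \beta_1<1$, hence $=0$, and there is no $v_0$-containing correction term --- so there is no gap, only a misdescription of why the degenerate case goes through.
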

\proof
Let $\mathcal A$ denote the set of $(r-1)$-subsets of $V$ that contain $v_0$ and let 
$\mathcal A'$ denote the set of $(r'-1)$-subsets of $V$ that contain $v_0$. 
Consider distinct $v,v' \in V-v_0$ and suppose $T\cup \{ v,v' \} \in D(v, v')$ for some $(r-1)$-subset $T$ of $V- \{v,v' \}$. Then either $T\in \mathcal{A}$ or $T\subseteq V- \{v_0, v,v' \}$.
In the latter case, as $T+v$ and $T+v'$ have different colors, one of them has the same color as $T+v_0$ and the other has a different color to $T+v_0$. Thus, either (i) $T\cup \{v,v_0\} \subseteq D(v,v_0)$ 
 and $T \cup \{v', v_0\} \subseteq C (v',v_0)$ or (ii) $T\cup \{v,v_0\} \subseteq C(v,v_0)$ 
 and $T \cup \{v', v_0\} \subseteq D (v',v_0)$.
 This implies that
\begin{equation}
\label{eq:DDD}
|D(v, v')|\leq | D(v, v_0)| +| D(v', v_0)| + |\mathcal A| \quad \text{and} \quad |D(v, v')|\leq |C(v, v_0)| + |C(v', v_0) |+|\mathcal A|
\end{equation}
for all distinct $v,v' \in V$.
An analogous argument implies that
\begin{equation}
\label{eq:DDD2}
|D'(v, v')|\leq | D'(v, v_0)| +| D'(v', v_0)| + |\mathcal A'| \quad \text{and} \quad |D'(v, v')|\leq |C'(v, v_0)| + |C'(v', v_0) |+|\mathcal A'|
\end{equation}
for all distinct $v,v' \in V$.

Consider any distinct $v, v'\in V_1$. By the definition of $V_1$ and \eqref{eq:DDD}, we have 
\begin{equation*}
|D(v, v')| \le | D(v, v_0)| + |D(v', v_0) |+|\mathcal A| \le 3 \b_1 n^{r-1}.
\end{equation*}
As $v, v_0$ and $v', v_0$ are both consistent pairs, the fact that $v, v'\in V_1$ implies that $| D'(v, v_0) | \le \b_1 n^{r'-1}$ and $| D'(v', v_0) | \le \b_1 n^{r'-1}$. Thus, by \eqref{eq:DDD2} we have that $|D'(v,v')|\leq 2 \beta _2 n^{r'-1}+|\mathcal A'| \leq
3\beta _1 n^{r'-1}$.

Next consider any distinct $v, v'\in V_2$. By the definition of $V_2$ and \eqref{eq:DDD}, we have 
\[
|D(v, v')| \le | C(v, v_0)| + |C(v', v_0) |+|\mathcal A| \le 3 \b_1 n^{r-1}.
\]
We can show that $|D'(v, v')| \le 3\b_1 n^{r'-1}$ as before.

Consider any $v \in V_1$ and $v'\in V_2$. Suppose that $T \subseteq V-\{v_0,v,v'\}$ is an $(r-1)$-set such that $T \cup \{ v,v'\} \subseteq C(v,v')$. So $T+v$ and $T+v'$
have the same color. Hence, $T+v_0$ has either a different color to $T+v$ or the same color as $T+v'$. Thus,
\begin{equation*}
|C(v, v')|\leq | D(v, v_0)| +| C(v', v_0)|+|\mathcal A| \le 3\b_1 n^{r-1},
\end{equation*}
where the second inequality follows by the definitions of $V_1$ and $V_2$. An analogous argument gives that $|C'(v, v')| \le 3\b_1 n^{r'-1}$.
\endproof

\medskip

Once we have obtained more information we will prove that $|V_1|$ and $|V_2|$ are close to $n/2$. However, to prove Claim~\ref{clm:ai} we first require the following weaker lower bounds on $|V_1|$ and $|V_2|$.
\begin{claim}\label{claim:V12}
$|V_1|, |V_2| \ge \frac{n}{20 r^2}$. 
\end{claim}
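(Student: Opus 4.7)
The plan is to argue by contradiction and apply Lemma~\ref{lem:KK} to a suitable restriction of the colour class $X_1$. Suppose for a contradiction that $\min(|V_1|, |V_2|) < n/(20r^2)$; by symmetry of the argument I may assume $|V_1| < n/(20r^2)$. Since $V$ is the disjoint union of $V_0, V_1, V_2$ and $|V_0| \le 4\beta_1 n$, the hierarchy~\eqref{hier2} yields
$$|V_2| \ge \left(1 - \frac{1}{10 r^2}\right) n.$$

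Next, I view $X_1 \subseteq \binom{V}{r}$ as the edge set of an $r$-uniform hypergraph $F := (V, X_1)$ and set $F_2 := F[V_2]$, its induced subhypergraph on $V_2$. A direct count gives
$$\left| X_1 \setminus \binom{V_2}{r} \right| \le |V \setminus V_2| \binom{n-1}{r-1} \le \frac{N}{10 r},$$
so $|E(F_2)| \ge |X_1| - N/(10r) \ge N/2 - N/(10r)$. Dividing by $\binom{|V_2|}{r} \le N$, the edge density $\rho$ of $F_2$ satisfies $\rho \ge 1/2 - 1/(10r) \ge 2/5$ for $r \ge 1$. The same count applied to the complementary edge set $X_2 \cap \binom{V_2}{r}$ forces $\rho \le 3/5$, so $\rho \in [2/5, 3/5]$.

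Lemma~\ref{lem:KK} with $\alpha = 2/5$, applied to $F_2$ (which has $|V_2| \ge n/2$ vertices and so is large enough), produces distinct vertices $v, v' \in V_2$ with $|D_{F_2}(v, v')| \ge (6/25) |V_2|^{r-1}/(r+1)!$. Since the defining condition $\phi(S-v) \ne \phi(S-v')$ of $D(v,v')$ is exactly ``precisely one of $S-v, S-v'$ lies in $X_1$'', we have $D_{F_2}(v, v') \subseteq D(v, v')$. Combining with $|V_2| \ge n/2$, I obtain
$$|D(v, v')| \ge \frac{6}{25} \cdot \frac{(n/2)^{r-1}}{(r+1)!} > 3\beta_1 n^{r-1}$$
as soon as $\beta_1$ is chosen small enough relative to $k$, which is guaranteed by~\eqref{hier2}. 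This contradicts Claim~\ref{ab}(a) applied to the pair $v, v'\in V_2$, and thus proves the claim; the symmetric case $|V_2| < n/(20r^2)$ runs identically, producing $v, v' \in V_1$ and contradicting Claim~\ref{ab}(a) on $V_1$. The only mildly non-routine step is verifying that $\rho$ stays bounded away from $0$ and $1$; this is automatic once $V_2$ covers almost all of $V$ and $|X_1| = \lceil N/2 \rceil$, so I do not foresee any genuine obstacle.
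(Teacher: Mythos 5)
Your proof is correct, and it takes a genuinely different route from the paper's. The paper applies the \emph{first} assertion of Lemma~\ref{lem:KK} (the summation bound) to the full hypergraph $(V,X_1)$, obtaining a lower bound $\sum_{vv'\in\binom{V}{2}}|D(v,v')|\ge n^{r+1}/5(r+1)!$; it then upper bounds the same sum by splitting pairs according to whether both endpoints lie in $V_1$ (controlled by Claim~\ref{ab}(a)) or not (controlled trivially by $|V\setminus V_1|$), and solves for $|V\setminus V_1|$ directly, with no contradiction framing. You instead argue by contradiction: if $|V_1|$ were small then $V_2$ covers almost all of $V$, the restriction $F[V_2]$ of $(V,X_1)$ still has density close to $1/2$, and the \emph{second} assertion of Lemma~\ref{lem:KK} produces a single pair $v,v'\in V_2$ with $|D(v,v')|$ too large for Claim~\ref{ab}(a). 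Both arguments rest on exactly the same two ingredients (Lemma~\ref{lem:KK} and Claim~\ref{ab}(a)), but yours localizes the argument to a pair inside $V_2$ rather than carrying out a global double count; it is marginally shorter, while the paper's version yields the bound on $|V_2|$ directly without the contradiction scaffolding. All the estimates you perform (the density sandwich $\rho\in[2/5,3/5]$, the passage from $D_{F_2}(v,v')$ to $D(v,v')$, and the comparison $(6/25)(n/2)^{r-1}/(r+1)!>3\beta_1 n^{r-1}$) check out given the hierarchy~\eqref{hier2}.
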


\begin{proof}
Suppose that $|V\setminus V_1| =cn$ for some $c$. Let us consider
\[
m := \sum_{vv'\in \binom{V}{2} } | D (v, v')| \le \sum_{vv' \in \binom{V_1}{2} } |D (v, v') | + \sum_{v\not\in V_1, v'\in V} |D (v, v') |.
\]
By Claim~\ref{ab} (a), the first term is at most $\binom{n}{2} 3\beta_1 n^{r-1} < 2\beta_1 n^{r+1}$.
The second term is at most $cn \binom{n}{r-1} n < \frac{ c}{ (r-1)!} n^{r+1}$. Together 
this gives that $m < (2\beta_1 + {c}/ {(r-1)!}) n^{r+1}$. 

On the other hand, the definition of $B_{n,k}$ yields $|X_1| = \lceil \binom{n}{r}/2 \rceil$.
Applying Lemma~\ref{lem:KK} to the $r$-uniform hypergraph $F:=(V, X_1)$, we derive that $m \ge (\frac{1}{4} - o(1))\binom{n}{r+1} \ge \frac{n^{r+1} }{5(r+1)!}$ (note that $D_F(v, v')= D(v, v')$ for any distinct $v$ and $v'$). Together with the upper bound for $m$, it follows that 
\[
2 \beta_1 + \frac{c}{ (r-1)! } > \frac{1}{5(r+1)!} \quad \text{and so} \quad \frac{c}{ (r-1)! } > \frac{1}{ 10(r+1)! },
\] 
since $\beta_1\ll 1/r$. This implies that $c > \frac{1}{10r(r+1)}$. Since $V\setminus V_1 = V_0\cup V_2$ and $|V_0|\le 4\beta_1 n$, we derive that $|V_2| \ge \frac{n}{10r(r+1)} - 4\beta_1 n\ge \frac{n}{20 r^2}$ (again using $\beta_1\ll 1/r$). An analogous argument implies
that $|V_1| \ge \frac{n}{20 r^2}$. 
\end{proof}

\medskip

Given two disjoint subsets $U_1, U_2\subseteq V$ and two integers $i, j\ge 0$,  we call an $(i+j)$-subset $S\subseteq V$ an $U_1^i U_2^j$-set, and write $S\in U_1^i U_2^j$, if $|S\cap U_1| = i$ and $|S\cap U_2|= j$. Let $\a_0 :=  \sqrt{\beta _1}$ and $\a_{i+1} := \a_i + \frac{3}{i+1} (20 r^3)^r \beta_1$ for $0\le i< r$. Set $n_1:= |V_1|$ and $n_2 := |V_2|$.

\begin{claim}\label{clm:ai}
$ \ $
\begin{enumerate}
\item For all $i= 0, \dots, r$,  at least $(1- \a_i) \binom{n_1 - i}{r-i} \binom{n_2}{i}$ $V_1^{r-i} V_2^i$-sets are in $X_{j_i}$, where $j_i \in \{1, 2\}$ and $j_{i+1} \neq j_i$;
\item For all $i= 0, \dots, r'$,  at least $(1- \a_i) \binom{n_1 - i}{r'-i} \binom{n_2}{i}$ $V_1^{r'-i} V_2^i$-sets are in $Y_{j'_i}$, where $j'_i \in \{1, 2\}$ and $j'_{i+1} \neq j'_i$.
\end{enumerate}

\end{claim}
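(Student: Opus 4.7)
My plan is to prove Claim~\ref{clm:ai} by induction on $i$, and to handle the two statements (1) and (2) by identical arguments, swapping the roles of $(\phi, X_1, X_2, r, \text{Claim~\ref{ab}(a) for } D)$ with $(\psi, Y_1, Y_2, r', \text{Claim~\ref{ab}(a) for } D')$. I will describe the proof of (1); (2) follows verbatim after the substitution, using the $r'$-analogues of the bounds on $D(u,v)$ and $C(u,v)$.

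For the base case $i = 0$, the goal is to show that the $r$-subsets of $V_1$ are almost monochromatic under $\phi$. I will view $F := (V_1, X_1 \cap \binom{V_1}{r})$ as an $r$-uniform hypergraph with edge density $\rho$, and note that $D_F(u,v) \subseteq D(u,v)$ for all $u, v \in V_1$; hence by Claim~\ref{ab}(a), $\sum_{\{u,v\} \in \binom{V_1}{2}} |D_F(u,v)| \le 3\beta_1 \binom{n_1}{2} n^{r-1} = O(\beta_1 n^{r+1})$. On the other hand, if $\rho \in [\alpha_0, 1 - \alpha_0]$, then Lemma~\ref{lem:KK} forces this sum to be at least $\alpha_0(1-\alpha_0) \binom{n_1}{r+1}$, which is $\Omega(\sqrt{\beta_1}\, n^{r+1})$ since $\alpha_0 = \sqrt{\beta_1}$ and $n_1 \ge n/(20r^2)$ by Claim~\ref{claim:V12}. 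These two estimates contradict each other when $\beta_1$ is small, so $\rho \le \alpha_0$ or $\rho \ge 1 - \alpha_0$. Whichever holds determines $j_0 \in \{1,2\}$.

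For the inductive step, assume (1) holds at stage $i$. I will count \emph{swap pairs} $(S, T)$, where $S$ is a $V_1^{r-i-1}V_2^{i+1}$-set, $T$ is a $V_1^{r-i}V_2^i$-set, and $|S \cap T| = r - 1$. Necessarily $T \setminus S = \{u\}$ with $u \in V_1$ and $S \setminus T = \{v\}$ with $v \in V_2$, and the $(r+1)$-set $R := S \cup T$ satisfies $R - u = S$ and $R - v = T$. Thus $\phi(S) = \phi(T)$ if and only if $R \in C(u,v)$. By Claim~\ref{ab}(b), $|C(u,v)| \le 3\beta_1 n^{r-1}$ for $u \in V_1, v \in V_2$, so the total number of swap pairs with $\phi(S) = \phi(T)$ is at most $n_1 n_2 \cdot 3\beta_1 n^{r-1}$. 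Letting $W$ denote the number of $V_1^{r-i-1}V_2^{i+1}$-sets $S$ with $\phi(S) = X_{j_i}$ (the ``wrong'' color), and using that at most $\alpha_i \binom{n_1}{r-i}\binom{n_2}{i}$ sets $T$ have $\phi(T) \neq X_{j_i}$ by induction, a straightforward double count (using the identity $(n_1-r+i+1)\binom{n_1}{r-i-1} = (r-i)\binom{n_1}{r-i}$) gives
\[
W \cdot (i+1)(n_1 - r + i + 1) \le 3\beta_1 n_1 n_2 n^{r-1} + \alpha_i (i+1)(n_1-r+i+1)\binom{n_1}{r-i-1}\binom{n_2}{i+1}.
\]
Using $n_1, n_2 \ge n/(20r^2)$ from Claim~\ref{claim:V12} to lower bound $(n_1-r+i+1)\binom{n_1}{r-i-1}\binom{n_2}{i+1}$, the first error term is absorbed into $\frac{3}{i+1}(20r^3)^r \beta_1 \binom{n_1}{r-i-1}\binom{n_2}{i+1}$, yielding $W \le \alpha_{i+1} \binom{n_1}{r-i-1}\binom{n_2}{i+1}$. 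Setting $j_{i+1} := 3 - j_i$ completes the induction: most $V_1^{r-i-1}V_2^{i+1}$-sets have the color opposite to $X_{j_i}$, as the swap argument shows this is forced by the smallness of $|C(u,v)|$ across classes.

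The main obstacle is the book-keeping in the inductive step: one must carefully verify that the incremental error $\alpha_{i+1} - \alpha_i = \frac{3}{i+1}(20r^3)^r \beta_1$ is enough to absorb the ratio $\frac{3\beta_1 n_1 n_2 n^{r-1}}{(i+1)(n_1-r+i+1) \binom{n_1}{r-i-1}\binom{n_2}{i+1}}$ for every $0 \le i < r$, which requires the factorial/binomial estimates made uniform in $i$ via Claim~\ref{claim:V12}. A subsidiary point is keeping track of which of $X_1, X_2$ plays the role of $j_i$ at each stage; this is automatic from the swap argument, which shows that the ``heavy'' color alternates with each step.
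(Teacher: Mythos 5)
Your approach uses the same two main ingredients as the paper (Lemma~\ref{lem:KK} for the base case $i=0$, and Claim~\ref{ab}(b) to control colour changes when a $V_1$-vertex is swapped for a $V_2$-vertex in the inductive step), but the inductive step is organised differently. The paper strengthens the inductive hypothesis: it fixes a sequence of vertices $v_1,\dots,v_r\in V_1$, argues via averaging that some $v_{i+1}$ has many correctly coloured extensions $P$ with $v_{i+1}\cup P\in X_j$, and then swaps $v_{i+1}$ for vertices $u\in V_2$, counting pairs $(u,P)$. This makes the binomial coefficients line up exactly: the number of $(V_1\setminus\{v_1,\dots,v_i\})^{r-i}V_2^i$-sets is precisely $\binom{n_1-i}{r-i}\binom{n_2}{i}$, which is the same denominator appearing in the $(1-\a_i)$ bound, so the recursion $\a_{i+1}=\a_i+\frac{3}{i+1}(20r^3)^r\beta_1$ closes without slack. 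Your version is a global double count over all swap pairs $(S,T)$. This is a clean and arguably more symmetric way to phrase the same argument, but it carries a small imprecision: your statement that ``at most $\a_i\binom{n_1}{r-i}\binom{n_2}{i}$ sets $T$ are wrongly coloured'' is not literally what the inductive hypothesis gives. The hypothesis says at least $(1-\a_i)\binom{n_1-i}{r-i}\binom{n_2}{i}$ are correctly coloured out of a total of $\binom{n_1}{r-i}\binom{n_2}{i}$, so the number of wrongly coloured ones is at most $\binom{n_1}{r-i}\binom{n_2}{i}-(1-\a_i)\binom{n_1-i}{r-i}\binom{n_2}{i}$, which exceeds your bound by an $O(n^{r-1})$ term (and you also pick up a factor $(n_1-i)/(n_1-r+i+1)=1+O(1/n)$). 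These are lower-order errors relative to the main $\Theta(n^r)$ quantities and are absorbed for $n$ sufficiently large, so your proof is essentially correct; but to land on the paper's exact constants $\a_i$ you would either need to track these error terms explicitly against the slack in the inequality $n^r\le(20r^3)^r\binom{n_1-i-1}{r-i-1}\binom{n_2}{i+1}$, or adopt the paper's vertex-removal trick, which eliminates them entirely.
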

\begin{proof}
We only prove the first assertion as the proof for the second is analogous (even in the
case when $r'=1$). We proceed by induction on $i$. We first apply Lemma~\ref{lem:KK} to the $r$-uniform hypergraph $F$
with vertex set $V_1$ and edge set $X_1 \cap \binom{V_1}{r}$.
If $F$ has edge density $\rho\in [\a_0, 1 - \a_0]$, then there are vertices $v, v'\in V_1$ such that 
  $| D_F(v, v') | \ge \a_0 (1-\a_0) n _1 ^{r-1} / (r+1)!$. However, by Claim~\ref{ab} (a), 
  $| D (v, v') | \le 3 \beta_1 n^{r-1} $. Since $\a_0=\sqrt{\beta _1} \ll 1/r$ and $n_1\geq n/20r^2$ by Claim~\ref{claim:V12}, we have
\[ 
3 \beta_1 n^{r-1} \ge  |D (v, v') | \ge | D_F(v, v') | \ge \frac{\a_0 (1-\a_0)}{ (r+1)! }n_1 ^{r-1}
\ge \frac{\a_0 (1-\a_0)}{ (r+1)! (20r^2)^{r-1}}n ^{r-1} > 3 \beta_1 n^{r-1},
\]
a contradiction. Thus, $\rho > 1- \a_0$ or $\rho < 1 - \a_0$.  If $\rho > 1- \a_0$, then at least $(1- \a_0)\binom{n_1}{r}$ $r$-subsets of $V_1$ are in $X_1$ and we set $j_0:= 1$.  If $\rho < 1 - \a_0$, then at most $\a_0 \binom{n_1}{r}$ $r$-subsets of $V_1$ are in $X_1$. Thus, at least $(1- \a_0)\binom{n_1}{r}$ $r$-subsets of $V_1$ are in $X_2$ and we set $j_0:= 2$.

For the induction step, we actually prove that there are vertices $v_1, \dots , v_r\in V_1$ such that for each $0 \leq i\le r$, at least $(1- \a_i) \binom{n_1 - i}{r-i} \binom{n_2}{i}$ $(V_1\setminus \{v_1, \dots, v_i\})^{r-i} V_2^i$-sets are in $X_j$, where $j= j_0$ if $i$ is even and $j= 3 - i_0$ if $i$ is odd.
Suppose this assertion holds for some $0 \leq i <r$. 
By an averaging argument, there exists a vertex $v_{i+1}\in V_1\setminus \{v_1, \dots, v_i\}$ and at least 
\[
(1- \a_i) \binom{n_1 - i}{r-i} \binom{n_2}{i} \frac{r-i}{n_1 - i} = (1- \a_i) \binom{n_1 - i - 1}{r-i-1} \binom{n_2}{i} 
\]
$(r-1)$-sets $P\in (V_1\setminus \{v_1, \dots, v_{i+1}\})^{r-i-1} V_2^i$ such that $\{v_{i+1}\}\cup P\in X_j$.
This implies that 
\begin{align*}
& \big| \{u\in V_2, P\in (V_1\setminus \{v_1, \dots, v_{i+1}\})^{r-i-1} V_2^i : u\not\in P, \{v_{i+1}\}\cup P\in X_j \} \big| \ge \\
& \qquad (1- \a_i) \binom{n_1 - i - 1}{r-i-1} \binom{n_2}{i} (n_2 - i).
\end{align*}
Since $v_{i+1}\in V_1$, given any $u\in V_2$, we have $| C (v_{i+1}, u)|\le 3\beta_1 n^{r-1}$ by Claim~\ref{ab} (b). Thus,
\begin{align*}
& \big| \{u\in V_2, P\in (V_1\setminus \{v_1, \dots, v_{i+1}\})^{r-i-1} V_2^i : \{u\}\cup P\in X_{3-j} \} \big| \ge \\
& \qquad (1- \a_i) \binom{n_1 - i - 1}{r-i-1} \binom{n_2}{i} (n_2 - i) - 3\beta_1 n^{r-1} n_2.
\end{align*}
Let $m_{i+1}$ be the number of $ (V_1\setminus \{v_1, \dots, v_{i+1}\})^{r-i-1} V_2^{i+1}$-sets in $X_{3-j}$. We thus have 
\begin{align*}
m_{i+1} &\ge (1- \a_i) \binom{n_1 - i - 1}{r-i-1} \binom{n_2}{i} \frac{n_2 - i}{i+1} - \frac{3\beta_1}{i+1} n^{r-1} n_2 \\
& \ge (1- \a_i) \binom{n_1 - i - 1}{r-i-1} \binom{n_2}{i+1} - \frac{3\beta_1}{i+1} n^r.
\end{align*}
It is easy to see that 
$\binom{n_1 - t}{r-t} \binom{n_2}{t} \ge {n_1^{r-t} n_2^t}/{r^r}$ for $0\le t\le r$.
Claim~\ref{claim:V12} states that $n_1, n_2\ge \frac{n}{20r^2}$. Consequently,
\[
\binom{n_1 - t}{r-t} \binom{n_2}{t}\ge \left(\frac{n}{20r^3}\right)^r \quad \text{and so} \quad n^r \le (20 r^3)^r \binom{n_1 - t}{r-t} \binom{n_2}{t},
\]
for all $0\le t\le r$.
Therefore,
\begin{align*}
m_{i+1} & \ge (1- \a_i) \binom{n_1 - i - 1}{r-i-1} \binom{n_2}{i+1} - \frac{3\beta_1}{i+1}  (20 r^3)^r \binom{n_1 - i-1}{r-i-1} \binom{n_2}{i+1} \\
& = (1- \a_{i+1}) \binom{n_1 - i - 1}{r-i-1} \binom{n_2}{i+1},
\end{align*}
as desired.
\end{proof}

Set $\a := \max_{0\le i\le r} \a_i$, $\eta_1 := (2\a)^{1/r} + 4\beta_1$ and  $\tilde{n} := n_1+n_2= |V_1\cup V_2|$.
\begin{claim}\label{clm:n2}
$|V_1|, |V_2| \ge (1- \eta_1) n/2$.
\end{claim}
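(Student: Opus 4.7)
My plan is to bound $\min(n_1,n_2)$ from below by comparing the number of $r$-subsets of $V_1\cup V_2$ lying in each color class $X_1, X_2$ against the known sizes $|X_1|, |X_2|\le \lceil N/2\rceil$. Set $c:=n_1/\tilde n$, and without loss of generality assume $j_0=1$ in Claim~\ref{clm:ai}(1), so that $V_1^{r-i}V_2^i$-sets lie in $X_1$ when $i$ is even and in $X_2$ when $i$ is odd, up to at most an $\alpha_i\le\alpha$ fraction of exceptions.

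The first step is to sum the lower bounds in Claim~\ref{clm:ai}(1) over $i$ of each parity and apply Proposition~\ref{evensum} (with parameter $c$ and $\tilde n$ in place of $n$; the replacement of $\binom{n_1-i}{r-i}$ by $\binom{n_1}{r-i}$ costs only $O(n^{r-1})$ in total) to obtain
\begin{align*}
\bigl|X_1\cap\tbinom{V_1\cup V_2}{r}\bigr|&\ge(1-\alpha)\cdot\frac{\tilde n^{\,r}}{2\,r!}\bigl(1+(2c-1)^r\bigr)-O(n^{r-1}),\\
\bigl|X_2\cap\tbinom{V_1\cup V_2}{r}\bigr|&\ge(1-\alpha)\cdot\frac{\tilde n^{\,r}}{2\,r!}\bigl(1-(2c-1)^r\bigr)-O(n^{r-1}).
\end{align*}

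Next, since $|X_j|\le \lceil N/2\rceil = n^r/(2\,r!)+O(n^{r-1})$ for each $j\in\{1,2\}$, I would pick the index $j$ for which the corresponding bracket equals $1+|2c-1|^r$ (i.e., for which $\pm(2c-1)^r\ge 0$) and combine with the upper bound to obtain
\[
(1-\alpha)\bigl(1+|2c-1|^r\bigr)\tilde n^{\,r}\le n^r+O(n^{r-1}).
\]
Using $\tilde n\ge(1-4\beta_1)n$, which gives $\tilde n^{\,r}\ge(1-4r\beta_1)n^r$, and the fact that $\alpha\ge\alpha_0=\sqrt{\beta_1}\gg\beta_1$ by the hierarchy, elementary rearrangement then yields $|2c-1|^r\le 2\alpha$, hence $|2c-1|\le(2\alpha)^{1/r}$. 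This forces
\[
\min(n_1,n_2)=\min(c,1-c)\,\tilde n\ge\tfrac{1-(2\alpha)^{1/r}}{2}(1-4\beta_1)n\ge\tfrac{(1-\eta_1)n}{2},
\]
with $\eta_1=(2\alpha)^{1/r}+4\beta_1$, as desired.

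I expect the only real subtlety to be the final arithmetic: recovering the sharp constant $(2\alpha)^{1/r}$ in $\eta_1$, rather than a weaker $(C\alpha)^{1/r}$, requires absorbing the $O(\beta_1)$ error terms into the $2\alpha$ slack, which is possible precisely because $\alpha\gg\beta_1$ under the stated hierarchy. Everything else—the parity-matching argument linking Claim~\ref{clm:ai} to a specific $X_j$, the asymptotics of Proposition~\ref{evensum}, and the comparison with $\lceil N/2\rceil$—is routine.
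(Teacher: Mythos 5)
Your proposal is correct and follows essentially the same route as the paper: sum the Claim~\ref{clm:ai} bounds over each parity, feed them into Proposition~\ref{evensum} with $\tilde n$ and $c$, compare against $|X_j|\le\lceil N/2\rceil$, and deduce $|2c-1|^r\le 2\alpha$. The only difference is organizational: you select $j\in\{1,2\}$ up front so that the bracket is $1+|2c-1|^r$, which collapses the paper's separate case analysis ($c\ge 1/2$; $c<1/2$ with $r$ even; $c<1/2$ with $r$ odd, where the paper has to go back and invoke the $|X_2|$ bound) into a single inequality.
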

\begin{proof}
We have that $\tilde{n} \ge (1- 4\beta_1)n$ since $|V_0| \leq 4 \beta _1 n$. 
Let $c:= n_1/ \tilde{n}$. It suffices to show that $\frac12 (1 - (2\a)^{1/r}) \le c\le \frac12 (1 + (2\a)^{1/r}) $ because this implies that 
\[
n_1 = c\tilde{n}\ge \frac12 \big(1 - (2\a)^{1/r} \big) (1- 4\beta_1)n > (1 - \eta_1) \frac{n}{2}
\]
and $n_2= (1-c)\tilde{n} \ge \frac12 (1 - (2\a)^{1/r}) (1- 4\beta_1)n > (1 - \eta_1) {n}/{2}$.

\smallskip
Without loss of generality, assume that $j_0=1$ in the statement of Claim~\ref{clm:ai}. Thus,
\[
|X_1| \ge \sum_{0\le i\le r, \, i \,\rm{even} } (1- \a_i) \binom{n_1 - i}{r-i} \binom{n_2}{i} \ge (1-\a) \sum_{0\le i\le r, \, i \,\rm{even} } \binom{n_1}{r-i} \binom{n_2}{i}  - O(n^{r-1}).
\]
Hence, by Proposition~\ref{evensum}, $|X_1| \ge  (1- \a) \frac{\tilde{n}^r}{2r!} ( 1+ (2c-1)^r ) -O(n^{r-1})$, which implies that  
\[
|X_1| \ge (1-\a) (1- 4\beta_1)^r \frac{n^r}{2r!} \big( 1+ (2c-1)^r \big) - O(n^{r-1}). 
\]
If $(2c-1)^r \ge 2\a$, then 
\[
(1-\a) (1- 4\beta_1)^r ( 1+ (2c-1)^r ) \ge 1 + \a/2, 
\]
since $\beta_1 = \alpha _0 ^2 \leq  \a ^2 \ll 1/r$.
Consequently, $|X_1|\ge (1+ \a/4) \frac{n^r}{2r!}$. This contradicts the assumption $|X_1| = \lceil \binom{n}{r} \rceil \le \frac{n^r}{2r!}$. Thus,
\begin{equation}
\label{eq:c}
 (2c-1)^r < 2\a. 
\end{equation}

If $c\ge 1/2$, then \eqref{eq:c} implies that $c< (1 + (2\a)^{1/r})/2$ and we are done.
Otherwise assume that $c< 1/2$. If $r$ is even, then \eqref{eq:c} implies that $(1-2c)^r = (2c- 1)^r < 2\a$ and 
so $c> (1 - (2\a)^{1/r})/2$. If $r$ is odd, then we apply Claim~\ref{clm:ai} and Proposition~\ref{evensum} obtaining that
\[
|X_2| \ge \sum_{0\le i\le r, \, i \,\rm{odd} } (1- \a_i) \binom{n_1 - i}{r-i} \binom{n_2}{i} \ge (1- \a) \frac{\tilde{n}^r}{2r!} ( 1- (2c-1)^r ) - O(n^{r-1}).
\]
If $1- (2c-1)^r = 1 + (1-2c)^r \ge 1 + 2\a$, then we obtain a contradiction as before  because $|X_2| = \lfloor \binom{n}{r} \rfloor \le \frac{n^r}{2r!}$. Hence, $(1- 2c)^r < 2\a$ and consequently $c> (1 - (2\a)^{1/r})/2$, as required.
\end{proof}

By Claim~\ref{clm:n2}, there exists a partition $ V'_1, V'_2$ of $V$ such that $|V'_1|= \lfloor n/2 \rfloor$, $|V'_2 | = \lceil n/2 \rceil$ and $|V_i\cap V'_i|\ge (1-\eta_1) n/2$ for each $i=1,2$.
\begin{claim}
\label{clm:X1B}
The hypergraph $(V, X_1)$ is $\eta$-close to $\mathcal B_{n,r}(V'_1,V'_2)$ or $\overline{\mathcal B}_{n,r}(V'_1,V'_2)$, and the hypergraph $(V, Y_1)$ is $\eta$-close to $\mathcal B_{n,r'}(V'_1,V'_2)$ or $\overline{\mathcal B}_{n,r'}(V'_1,V'_2)$.
\end{claim}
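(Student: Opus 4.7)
The plan is to leverage the structural information in Claim~\ref{clm:ai} to show that $X_1$ is, up to a small error, precisely the family of $r$-subsets of $V_1 \cup V_2$ whose intersection with $V_2$ has a fixed parity; the rest is a parity-counting identification that matches this to one of $\mathcal B_{n,r}(V'_1,V'_2)$ or $\overline{\mathcal B}_{n,r}(V'_1,V'_2)$. I will only describe the argument for $(V, X_1)$; the one for $(V, Y_1)$ is completely analogous, invoking part~(2) of Claim~\ref{clm:ai} with $r', Y_1, Y_2$ in place of $r, X_1, X_2$.

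The first step is to localize the disagreement between the partitions $\{V_1, V_2\}$ and $\{V'_1, V'_2\}$ on a small set $W := V_0 \cup (V_1 \setminus V'_1) \cup (V_2 \setminus V'_2)$. From $|V_0| \leq 4\b_1 n$ and $|V_i \cap V'_i| \geq (1 - \eta_1) n/2$ (so $|V_i \setminus V'_i| \leq \eta_1 n$), I get $|W| = O(\eta_1 n)$. Hence the number of $r$-subsets of $V$ meeting $W$ is at most $|W|\binom{n-1}{r-1} = O(\eta_1 n^r)$, which is at most $\eta n^r /2$ by the hierarchy $\eta_1 \ll \eta$. Crucially, for every $r$-set $S \subseteq V \setminus W$ we have $V_i \cap S = V'_i \cap S$ for $i = 1, 2$, so in particular $|S \cap V_2| = |S \cap V'_2|$.

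The second step is to apply Claim~\ref{clm:ai}(1) directly: all but at most
\[
\sum_{i=0}^{r} \a_i \binom{n_1 - i}{r-i}\binom{n_2}{i} \le \a\binom{n}{r}
\]
of the $r$-subsets of $V_1 \cup V_2$ are correctly colored, i.e., belong to $X_{j_i}$ when they are $V_1^{r-i} V_2^i$-sets. Since the $j_i$ alternate, for each choice of $j_0 \in \{1,2\}$ the set $X_1 \cap \binom{V \setminus W}{r}$ differs by at most $\a\binom{n}{r}$ from $\{ S \in \binom{V \setminus W}{r} : |S \cap V'_2| \equiv \epsilon \pmod 2\}$ for the appropriate $\epsilon \in \{0,1\}$ (using the previous step to translate from $V_2$ to $V'_2$).

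The last step is a parity identification. Since $|S|=r$, the parities of $|S \cap V'_1|$ and $|S \cap V'_2|$ are related by $|S\cap V'_1| \equiv r - |S\cap V'_2| \pmod 2$. Hence the family $\{S : |S \cap V'_2| \equiv \epsilon \pmod 2\}$ equals $\mathcal B_{n,r}(V'_1,V'_2)$ for one choice of $(\epsilon, r \bmod 2)$ and equals $\overline{\mathcal B}_{n,r}(V'_1,V'_2)$ for the other; in particular, for each of the four combinations of $j_0 \in \{1,2\}$ and $r \bmod 2$ we land in one of these two extremal hypergraphs. Adding the $O(\eta_1 n^r)$ $r$-sets meeting $W$ to the $\a\binom{n}{r}$ exceptions and using $\a, \eta_1 \ll \eta$, we conclude that $(V, X_1)$ is $\eta$-close to one of $\mathcal B_{n,r}(V'_1,V'_2)$ or $\overline{\mathcal B}_{n,r}(V'_1,V'_2)$. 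I do not foresee a genuine obstacle here; the entire argument is bookkeeping of error terms, with the only nontrivial input being the already-established Claim~\ref{clm:ai}.
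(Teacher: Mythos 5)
Your proposal is correct and follows essentially the same route as the paper: both use Claim~\ref{clm:ai} to identify $X_1$ (up to $O(\a\binom{n}{r})$ errors) with the family of $r$-subsets having a fixed parity of intersection with $V_2$, then translate to $V'_1, V'_2$ at a further cost of $O(\eta_1 n^r)$. Your introduction of the exceptional set $W$ is a slightly more streamlined bookkeeping device than the paper's two-stage translation, and your intermediate bound $\sum_i \a_i\binom{n_1-i}{r-i}\binom{n_2}{i}$ should technically carry an extra $O(n^{r-1})$ correction (since the total number of $V_1^{r-i}V_2^i$-sets is $\binom{n_1}{r-i}\binom{n_2}{i}$, not $\binom{n_1-i}{r-i}\binom{n_2}{i}$), but this is cosmetic and does not affect the conclusion.
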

\begin{proof}
By Claim~\ref{clm:ai}, at least 
\[
\sum_{0\le i\le r, \, i \,\text{even}} (1-\a_i) \binom{n_1-i}{r-i} \binom{n_2}{i} \ge (1-\a) \sum_{0\le i\le r, \, i \,\text{even}}  \binom{n_1}{r-i} \binom{n_2}{i} - O(n^{r-1})
\]
$r$-subsets of $V_1 \cup V_2$ are in $X_{j_0}$ and have an even number of vertices in $V_2$, and at least $(1-\a) \sum_{0\le i\le r, \, i \,\text{odd}}  \binom{n_1}{r-i} \binom{n_2}{i} - O(n^{r-1})$ $r$-subsets of $V_1 \cup V_2$ are in $X_{j_1}$ and have an odd number of vertices in $V_2$. 

Suppose that $r$ is odd. Set $\mathcal{B} := \mathcal{B}_{\tilde{n},r}(V_1, V_2)$. By definition,  
\[
|E(\mathcal{B})| = \sum_{0\le i\le r, \, i \,\text{even}}  \binom{n_1}{r-i} \binom{n_2}{i} 
\quad \text{and} \quad
| E(\overline{\mathcal B} )| = \sum_{0\le i\le r, \, i \,\text{odd}}  \binom{n_1}{r-i} \binom{n_2}{i}.
\] 
Claim~\ref{clm:ai} thus implies that
\begin{equation} \label{eq:EB}
| E(\mathcal B) \cap X_{j_0}| \geq (1- \a) |E(\mathcal{B})| - O(n^{r-1})  
\quad \text{and} \quad
| E(\overline{\mathcal B}) \cap X_{j_1}| \geq (1- \a) | E(\overline{\mathcal B}) |- O(n^{r-1}).
\end{equation}
It follows that 
\[
| E(\mathcal B) \setminus X_{j_0}| = | E(\mathcal B) | - | E(\mathcal B) \cap X_{j_0}| \le \a |E(\mathcal{B})|  + O(n^{r-1}) \le 2\a \binom{n}{r}.
\]
On the other hand, letting $\tilde{X}_{j_0} := X_{j_0} \cap \binom{V\setminus V_0}{r}$, we have
\begin{align*}
|X_{j_0} \setminus E(\mathcal B) | &\le | \tilde{X}_{j_0} \setminus E(\mathcal B)| + |V_0| \binom{n-1}{r-1} \le | \tilde{X}_{j_0} \cap E(\overline{\mathcal B}) | + 4\b_1 n \binom{n-1}{r-1} \\
& = |E(\overline{\mathcal B})| -  |E(\overline{\mathcal B}) \cap X_{j_1}| + 4 \b_1 r \binom{n}{r} \\
&\stackrel{\eqref{eq:EB}}{\le} \a |E(\overline{\mathcal B})| + O(n^{r-1}) +4 \b_1 r \binom{n}{r}  \le 2\a \binom{n}{r}.
\end{align*} 
We thus derive that $|E(\mathcal B) \triangle X_{j_0}|\le 4\a \binom{n}{r}$.

Note that at most $2 \frac{\eta_1}2 n$ vertices of $V$ are not in $(V_1\cap V'_1)\cup (V_2\cap V'_2)$ and each edge in $E(\mathcal B) \triangle E(\mathcal B_{n,r}(V'_1, V'_2))$ must contain such a vertex. Hence, 
\[
| E(\mathcal{B}) \triangle E(\mathcal B_{n,r}(V'_1, V'_2)) | \le  {\eta_1} n \binom{n-1}{r-1}.
\]
Therefore,
\[
| E(\mathcal B_{n,r}(V'_1, V'_2)) \triangle X_{j_0}|  \le {\eta_1} n \binom{n-1}{r-1} + 4\a\binom{{n}}{r}\le {\eta} \binom{n}{r}.
\]
Since $j_0\in \{1, 2\}$, we conclude that $(V, X_1)$ is $\eta$-close to $\mathcal B_{n,r}(V'_1, V'_2)$ or $\overline{\mathcal B}_{n,r}(V'_1, V'_2)$. The case when $r$ is even is analogous. Moreover,  analogous arguments show that $(V, Y_1)$ is $\eta$-close to $\mathcal B_{n,r'}(V'_1, V'_2)$ or $\overline{\mathcal B}_{n,r'}(V'_1, V'_2)$.
\end{proof}

\begin{claim}
$H$ is $\eps$-close to $\mathcal B_{n,k}(V'_1, V'_2)$ or $\overline{\mathcal B}_{n,k}(V'_1, V'_2)$.
\end{claim}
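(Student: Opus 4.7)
The plan is to translate the approximate structure of $G$, $X_1$, and $Y_1$ into approximate structure for $H$ itself, via a direct symmetric-difference bound. First I would invoke Claim~\ref{clm:X1B} to pin down which of the two options it gives for each of $(V, X_1)$ and $(V, Y_1)$: write $X^*_1$ for the edge set of the chosen candidate among $\mathcal B_{n,r}(V'_1,V'_2)$ and $\overline{\mathcal B}_{n,r}(V'_1,V'_2)$ (so $|X_1 \triangle X^*_1| \le \eta \binom{n}{r}$), and define $Y^*_1 \subseteq Y^{r'}$ analogously. Setting $X^*_2 := X^r \setminus X^*_1$, $Y^*_2 := Y^{r'} \setminus Y^*_1$, and $B^* := K_{X^*_1, Y^*_1} \cup K_{X^*_2, Y^*_2}$, a single vertex swap across either bipartition affects only $N'$ or $N$ edges respectively, so $|E(B_{n,k}) \triangle E(B^*)| \le 2\eta NN'$. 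Combining with $G = B_{n,k} \pm \b NN'$ via the triangle inequality gives $|E(G) \triangle E(B^*)| \le 3\eta NN'$.

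The crux is then a parity check: for disjoint $x \in X^r$ and $y \in Y^{r'}$, the pair $(x, y)$ lies in $E(B^*)$ iff $[x \in X^*_1]$ equals $[y \in Y^*_1]$. Since membership in $X^*_1$ and $Y^*_1$ is dictated by the parities of $|x \cap V'_1|$ and $|y \cap V'_1|$ respectively, and since $|(x \cup y) \cap V'_1| = |x \cap V'_1| + |y \cap V'_1|$ for disjoint $x, y$, each of the four case combinations pins down a single target $H^* \in \{\mathcal B_{n,k}(V'_1,V'_2), \overline{\mathcal B}_{n,k}(V'_1,V'_2)\}$ such that on disjoint pairs, $(x, y) \in E(B^*)$ iff $x \cup y \in E(H^*)$.

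Finally, I would use the definition of $G$: for disjoint pairs, $(x, y) \in E(G)$ iff $x \cup y \in E(H)$, so each element of $E(H) \triangle E(H^*)$ accounts for exactly $\binom{k}{r}$ elements of the disjoint portion of $E(G) \triangle E(B^*)$. Hence
\[
\binom{k}{r}\, |E(H) \triangle E(H^*)| \le |E(G) \triangle E(B^*)| \le 3\eta NN',
\]
and dividing, using $NN'/\binom{k}{r} \le n^k/k!$, yields $|E(H) \triangle E(H^*)| \le 3\eta n^k/k! \le \eps n^k$ by the hierarchy $\eta \ll \eps$ from \eqref{hier2}. Thus $H$ is $\eps$-close to $H^*$, as required.

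I expect the only delicate point of this final step to be clerical: tracking which of the four combinations of $(X^*_1, Y^*_1)$ maps to $\mathcal B_{n,k}(V'_1,V'_2)$ and which to $\overline{\mathcal B}_{n,k}(V'_1,V'_2)$. The real substance was extracted in Claims~\ref{clm:X1B}--\ref{clm:n2}, which produced the bipartition $V'_1, V'_2$ and the parity structure on $r$- and $r'$-subsets; the current argument is essentially a triangle-inequality computation backed by the identity above.
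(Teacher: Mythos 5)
Your proposal is correct and takes essentially the same approach as the paper, namely converting $|E(H)\triangle E(H^*)|$ into a count of bipartite pairs in $E(G)\triangle E(B^*)$ using the parity observation that $|(x\cup y)\cap V'_1| = |x\cap V'_1| + |y\cap V'_1|$ for disjoint $x,y$. Your packaging via the auxiliary graph $B^*$ together with a single triangle-inequality chain and division by $\binom{k}{r}$ is a somewhat cleaner, case-free formulation of the paper's argument, which instead enumerates four cases for $(X_1, Y_1)$, proves one in detail, and there bounds $|E(H)\setminus E(\mathcal B_{n,k})|$ and $|E(\mathcal B_{n,k})\setminus E(H)|$ separately via the explicitly defined sets $\Sigma_1, \Sigma_2, \Gamma_1, \Gamma_2$; the underlying computation is the same.
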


\begin{proof}
Since we always consider the partition $ V'_1, V'_2$ of $V$, we write, for example,
$\mathcal B_{n,k}$ instead of $\mathcal B_{n,k}[V'_1,V'_2]$ throughout the proof.
 We call a set $S\subseteq V$ \emph{even} if $|S\cap V'_1|$ is even. Otherwise we say that $S$ is  \emph{odd}.
Thus, $E(\mathcal{B}_{n,r})$ consists of all odd $r$-sets, and $E(\overline{\mathcal B}_{n,r'})$ consists of all even $r'$-sets. For convenience, we use $X_i, Y_i$ to denote the hypergraphs $(V, X_i), (V, Y_i)$ for $i=1, 2$.

There are four possible cases:
\begin{enumerate}
\item $X_1 = \B_{n, r} \pm \eta n^r$ and $Y_1= \B_{n,r'} \pm \eta n^{r'}$,
\item $X_1 = \B_{n, r} \pm \eta n^r$ and $Y_1= \oB_{n,r'} \pm \eta n^{r'}$,
\item $X_1 = \oB_{n, r} \pm \eta n^r$ and $Y_1= \B_{n,r'} \pm \eta n^{r'}$,
\item $X_1 = \oB_{n, r} \pm \eta n^r$ and $Y_1= \oB_{n,r'} \pm \eta n^{r'}$.
\end{enumerate}
We claim that $H= \B_{n,k} \pm \eps n^k$ under Cases 2 and 3, and $H= \oB_{n,k} \pm \eps n^k$ under Cases 1 and 4. Below we show that $H= \B_{n,k} \pm \eps n^k$ under Case 2. (The other cases are analogous.)

Assume that $X_1 = \B_{n, r} \pm \eta n^r$ and $Y_1= \oB_{n,r'} \pm \eta n^{r'}$.  Our goal is to show that
$|E(H)\triangle E(\mathcal B_{n,k})| \leq \eps n^{k}$.

First we show that $|E(H)\setminus E(\mathcal B_{n,k})| \leq \eps n^k /2$. Consider any $k$-set $Q$
from $ E(H)\setminus E(\mathcal B_{n,k})$. Since $Q \not \in  E(\mathcal B_{n,k})$ (and thus $|Q\cap V'_1|$ is even), $Q$ can be
 partitioned into (i) an even $r$-set $\u{x}$ and an even $r'$-set $\u{y}$ or (ii) an odd $r$-set $\u{x}$ and an odd $r'$-set $\u{y}$. 
As $Q \in E(H)$, in both cases we have that $\{ \u{x} , \u{y} \} \in E(G)$. Thus,
$$ |E(H)\backslash E(\mathcal B_{n,k}) | \leq |\Sigma_1|+ |\Sigma_2|,$$
where $\Sigma _1$ is the set of all disjoint $\u{w}\in X^r$, $\u{z} \in Y^{r'}$ such that $\u{w}$ and $\u{z}$ are even and $\{\u{w} , \u{z} \} \in E(G)$ and $\Sigma _2$ is the set of all disjoint $\u{w}\in X^r$, $\u{z} \in Y^{r'}$ such that $\u{w}$ and $\u{z}$ are odd and $\{\u{w} , \u{z} \} \in E(G)$.

Since $X_1 = \mathcal B_{n,r} \pm \eta n^{r}$ (and thus $X_2 = \oB_{n,r} \pm \eta n^{r}$), there are at most $\eta n^{r} \binom{n}{r'}\leq \eta n^{k}$ pairs $\{ \u{w},\u{z} \} \in \Sigma_1$ such that $\u{w}$ is not in $X_2$. Since $Y_1 = \oB_{n,r'} \pm \eta n^{r'}$, there are at most $\eta n^{r'} \binom{n}{r}\leq \eta n^{k}$ pairs $\{ \u{w},\u{z} \} \in \Sigma_1$ such that $\u{z}$ is not in $Y_1$. By the structure of $G$, at most $\beta \binom{n}{r} \binom{n}{r'} < \beta n^k$ pairs $\u{w}\in X_2$, $\u{z} \in Y_1$ are such that $\{\u{w} , \u{z} \} \in E(G)$. 
Together, this implies that $|\Sigma_1| \leq (\eta +\eta+ \beta)n^{k} \leq \eps n^{k}/4 $. 

Since $X_1 = \mathcal B_{n,r} \pm \eta n^{r}$, there are at most $\eta n^{r} \binom{n}{r'}\leq \eta n^{k}$ pairs $\{ \u{w},\u{z} \} \in \Sigma _2$ such that $\u{w}$ is not in $X_1$. Since $Y_2 = \B_{n,r'} \pm \eta n^{r'}$, there are at most $\eta n^{r'} \binom{n}{r}\leq \eta n^{k}$ pairs $\{ \u{w},\u{z} \} \in \Sigma _2$ such that $\u{z}$ is not in $Y_2$. By the structure of $G$, at most $\beta \binom{n}{r} \binom{n}{r'} < \beta n^k$ pairs $\u{w}\in X_1$, $\u{z} \in Y_2$ are such that $\{\u{w} , \u{z} \} \in E(G)$. 
Together, this implies that $|\Sigma_2| \leq (\eta +\eta+ \beta)n^{k} \leq \eps n^{k}/4 $. 
So indeed,
$|E(H) \setminus E(\mathcal B_{n,k})|\leq \eps n^{k}/2 $.

Next we show that $|E(\mathcal B_{n,k})\setminus E(H)|\leq \eps n^{k} /2$. Consider any $k$-set $Q $ from $E(\mathcal B_{n,k})\setminus E(H)$. Since $Q \in  E(\mathcal B_{n,k})$ (and thus $|Q\cap V'_1|$ is odd), we can partition $Q$ into (i) an even $r$-set $\u{x}$ and an odd $r'$-set $\u{y}$ or (ii) an odd $r$-set $\u{x}$ and an even $r'$-set $\u{y}$. As $Q \not \in E(H)$,
in both cases we have that $\{ \u{x} , \u{y} \} \not\in E({G})$.
Thus,
$$ |E(\mathcal B_{n,k})\setminus E(H)| \leq |\Gamma_1|+|\Gamma_2|,$$
where $\Gamma_1$ is the set of all disjoint $\u{w}\in X^r$, $\u{z} \in Y^{r'}$ such that $\u{w}$ is even,
$\u{z}$ is odd and $\{\u{w} , \u{z} \} \not\in E({G})$ and $\Gamma_2$ is the set of all disjoint $\u{w}\in X^r$, $\u{z} \in Y^{r'}$ such that $\u{w}$ is odd,
$\u{z}$ is even and $\{\u{w} , \u{z} \} \not\in E({G})$.

 Since $X_2 = \oB_{n,r} \pm \eta n^{r}$, there are at most $\eta n^{r} \binom{n}{r'}\leq \eta n^{k}$ pairs $\{ \u{w},\u{z} \} \in \Gamma_1$ such that $\u{w}$ is not in $X_2$. Since $Y_2 = \B_{n,r'} \pm \eta n^{r'}$, there are at most $\eta n^{r'} \binom{n}{r}\leq \eta n^{k}$ pairs $\{ \u{w},\u{z} \} \in \Gamma_1$ such that $\u{z}$ is not in $Y_2$. By the structure of $G$,  at most $\beta \binom{n}{r} \binom{n}{r'} < \beta n^k$ pairs $\u{w}\in X_2$, $\u{z} \in Y_2$ are such that $\{\u{w} , \u{z} \}\not\in E(G)$. 
Together, this implies that $|\Gamma_1| \leq (\eta +\eta+ \beta)n^{k} \leq \eps n^{k}/4 $.

Since $X_1 = \B_{n,r} \pm \eta n^{r}$, there are at most $\eta n^{r} \binom{n}{r'}\leq \eta n^{k}$ pairs $\{ \u{w},\u{z} \} \in \Gamma_2$ such that $\u{w}$ is not in $X_1$. Since $Y_1 = \oB_{n,r'} \pm \eta n^{r'}$, there are at most $\eta n^{r'} \binom{n}{r}\leq \eta n^{k}$ pairs $\{ \u{w},\u{z} \} \in \Gamma_2$ such that $\u{z}$ is not in $Y_1$. By the structure of $G$,  at most $\beta \binom{n}{r} \binom{n}{r'} < \beta n^k$ pairs $\u{w}\in X_1$, $\u{z} \in Y_1$ are such that $\{\u{w} , \u{z} \}\not\in E(G)$. 
Together, this implies that $|\Gamma_2| \leq (\eta +\eta+ \beta)n^{k} \leq \eps n^{k}/4 $. So indeed,
$|E(\mathcal B_{n,k})\setminus E(H)|\leq \eps n^{k} /2$. Therefore $|E(H)\triangle E(\mathcal B_{n,k})| \leq \eps n^{k}$, as desired.

\end{proof}

This thus completes the proof of Lemma~\ref{lem:GH}.

\section*{Acknowledgement}
We thank Oleg Pikhurko for a helpful comment on this problem.

\medskip

{\footnotesize \obeylines \parindent=0pt

\begin{tabular}{lll}

Andrew Treglown                     &\ &  Yi Zhao \\
School of Mathematical Sciences					&\ &  Department of Mathematics and Statistics \\
Queen Mary, University of London  &\ &  Georgia State University \\
Mile End Road                  &\ &  Atlanta \\
London		&\ &  Georgia 30303\\
E1 4NS												&\ &  USA\\
UK											&\ &
\end{tabular}
}

{\footnotesize \parindent=0pt

\it{E-mail addresses}:
\tt{treglown@maths.qmul.ac.uk}, \tt{yzhao6@gsu.edu}}


\begin{thebibliography}{10}
\bibitem{afh} N. Alon, P. Frankl, H. Huang, V. R\"odl, A. Ruci\'nski and B. Sudakov,
Large matchings in uniform hypergraphs and the conjectures of
Erd\H{o}s and Samuels, \emph{J. Combin. Theory Ser. A}~{\bf 119} (2012), 1200--1215.

\bibitem{czy} A. Czygrinow, V. Kamat, Tight co-degree condition for perfect matchings in 4-graphs, 
\emph{Electron. J. Combin.}~{\bf{19}} (2012), no. 2, P20.

\bibitem{dayhag} D.E. Daykin and R. H\"aggkvist, Degrees giving independent edges in a hypergraph, \emph{Bull. Austral. Math. Soc.}~{\bf 23} (1981), 103--109.

\bibitem{garey} M.R. Garey and D.S. Johnson, Computers and intractability, Freeman, 1979.
\bibitem{gowers} W.T. Gowers, Hypergraph regularity and the multidimensional Szemer\'edi theorem, \emph{Ann.
of Math.}  {\bf 166} (2007),  897--946.
\bibitem{hps} H. H\`an, Y. Person and M. Schacht, On perfect matchings in uniform hypergraphs with large minimum vertex degree,
\emph{SIAM J. Discrete Math.} {\bf 23} (2009), 732--748.
\bibitem{karp} R.M. Karp, Reducibility among combinatorial problems, Complexity of computer computations (Proc.
Sympos., IBM Thomas J. Watson Res. Center, Yorktown Heights, N.Y., 1972), pp. 85--103, Plenum,
New York, 1972.
\bibitem{keesud} P. Keevash and B. Sudakov, On a hypergraph Tur\'an problem of Frankl, \emph{Combinatorica}~{\bf 25} (2005),
673--706.
\bibitem{khan1} I. Khan, Perfect Matching in $3$ uniform hypergraphs with large vertex degree, arXiv:1101.5830.
\bibitem{khan2} I. Khan, Perfect Matchings in $4$-uniform hypergraphs, arXiv:1101.5675. 
\bibitem{ko1} D. K\"uhn and D. Osthus, Matchings in hypergraphs of large minimum degree, \emph{J. Graph Theory}~{\bf 51} (2006), 269--280.
\bibitem{KOTmatch} D. K\"uhn, D. Osthus and A. Treglown, Matchings in $3$-uniform hypergraphs, arXiv:1009.1298.
\bibitem{Lov} L. Lov\'asz, Combinatorial problems and exercises, North-Holland, Amsterdam, 1979.

\bibitem{mark} K. Markstr\"om and A. Ruci\'nski, Perfect matchings (and Hamilton cycles) in hypergraphs with large degrees, \emph{European J. Combin.}~{\bf 32} (2011), 677--687.
\bibitem{pik} O. Pikhurko, Perfect matchings and $K^3 _4$-tilings in hypergraphs of large codegree, \emph{Graphs Combin.}~{\bf 24} (2008),
391--404.
\bibitem{rrsurvey} V. R\"odl and A. Ruci\'nski, Dirac-type questions for hypergraphs -- a survey (or more problems for Endre to solve),
\emph{An Irregular Mind (Szemer\'edi is 70)}, Bolyai Soc. Math. Studies~{\bf 21} (2010), 1--30.
\bibitem{rrs2} V. R\"odl, A. Ruci\'nski and E. Szemer\'edi, A Dirac-type theorem for 3-uniform hypergraphs,
\emph{Combin. Probab. Comput.}~{\bf 15} (2006), 229--251.
\bibitem{rrs1} V. R\"odl, A. Ruci\'nski and E. Szemer\'edi, Perfect matchings in uniform hypergraphs with large minimum degree,
\emph{European J. Combin.}~{\bf 27} (2006), 1333--1349.
\bibitem{rrs} V. R\"odl, A. Ruci\'{n}ski and E. Szemer\'edi, Perfect matchings in large uniform hypergraphs with large minimum
collective degree, \emph{J. Combin. Theory Ser. A}~{\bf 116} (2009), 613--636.
\bibitem{rodlskokan} V. R\"odl and J. Skokan, Applications of the regularity lemma for uniform hypergraphs, \emph{Random Structures and
Algorithms}~{\bf 28} (2006), 180--194.
\bibitem{zhao} A. Treglown and Y. Zhao, Exact minimum degree thresholds for perfect matchings in uniform hypergraphs, 
\emph{J. Combin. Theory A} 119 (2012), 1500--1522.
\bibitem{tutte} W.T. Tutte, The factorisation of linear graphs, \emph{J. London Math. Soc}~{\bf 22} (1947), 107--111.
\end{thebibliography}
\end{document}